\def\R{\mathbb R}
\def\N{\mathbb N}
\def\Cllc{\mathrm{Ad}}
\def\Clc{\mathrm{Ad}}
\def\Ad{\mathrm{Ad}}
\def\cal{\mathcal}
\def\Ao{{\cal A}}
\def\F{{\cal F}}
\def\G{{\cal G}}
\def\H{{\cal H}}
\def\M{{\cal M}}
\def\L{{\cal L}}
\def\e{\varepsilon}
\def\G{\mathcal{G}}
\def\ov{\overline}
\lbrace\begin{array}{@{}l@{}}}%
\def\pa{\partial}
\def\wt{{\rightharpoonup^*}\,}
\def\Id{{\rm Id}\,}
\def\d{\, \mathrm{d}}
\def\dist{{\rm dist}}
\def\ca{\mathbbmss{1}}
\def\00{{\bf 0}}
\newcommand{\cc}{\subset\subset}
\newcommand{\ga}{\mathrm{GA}}
\newcommand{\restr}{%
  \,\raisebox{-.127ex}{\reflectbox{\rotatebox[origin=br]{-90}{$\lnot$}}}\,%
}
\DeclareMathOperator*{\spt}{spt}
\newtheorem{theorem}{Theorem}[section]
\newtheorem{corollary}[theorem]{Corollary}
\newtheorem{proposition}[theorem]{Proposition}
\newtheorem{lemma}[theorem]{Lemma}
\theoremstyle{definition}
\newtheorem{remark}[theorem]{Remark}
\newtheorem{definition}[theorem]{Definition}
\numberwithin{equation}{section}
\numberwithin{figure}{section}
\newcommand{\Oin}{\mathcal{A}(\Omega)}
\title[Gamma Convergence of energy-measures]{On the Gamma convergence of functionals defined over pairs of measures and energy-measures}
\author{Marco Caroccia}
\address{DiMaI, Universit\`a di Firenze, and Scuola Normale Superiore di Pisa}
\author{Riccardo Cristoferi}
\address{Heriot-Watt University \\ Department of Mathematical Sciences \\ Edinburgh EH14 4AS \\ United Kingdom}
\email{caroccia.marco@gmail.com,  r.cristoferi@hw.ac.uk}
\begin{document}

 \begin{abstract}
 A novel general framework for the study of $\Gamma$-convergence of functionals defined over pairs of measures 
 and energy-measures is introduced.
 This theory allows us to identify the $\Gamma$-limit of these kind of functionals by knowing the $
 \Gamma$-limit of the underlying energies.
 In particular, the interaction between the functionals and the underlying energies results, in the case these
 latter converge to a non continuous energy, in an additional effect in the relaxation process. 
 This study was motivated by a question in the context of epitaxial growth evolution with adatoms.
 Interesting cases of application of the general theory are also presented.
 \end{abstract}

 \maketitle


\section{Introduction}

Mathematical models for epitaxial crystal growth usually assume the interfaces to evolve via the so called Einstein-Nernst relation (see, for instance, \cite{FFLM, FFLM2,  FFLM3,  FFLM4}).
For solid-vapour interfaces, it has been observed in \cite{SpeTer} that the usually neglected adatoms (atoms freely moving on the surface of the crystal) play an important role in the description of the evolution of the interface.
For this reason, Fried and Gurtin in \cite{Gurtin} introduced a model that includes the effect of adatoms,
an additional variable whose evolution in time is coupled to the evolution in time of the interface of the crystal.
In the simple case of a crystal growing on a general shape, \emph{i.e.}, not graph constrained, but without considering elastic effects or surface stress, the free energy of the system reads as
\begin{equation}\label{eq:energyf}
\mathcal{G}(E,u):=\int_{\partial^* E} \psi(u) \,\d\mathcal{H}^{d-1}\,.
\end{equation}
Here $E\subset\R^d$ is a set with finite perimeter representing the shape of the crystal, and
$u\in L^1(\partial^*E;[0,+\infty))$ is the density of the adatoms.
The Borel function $\psi:[0,+\infty)\to(0,+\infty)$ is assumed to be non-decreasing and satisfying $\inf\psi>0$.
From the physical point of view, this latter hypothesis is motivated by the fact that, energetically, even an interface without adatoms cannot be created for free.

The interest in considering the model \eqref{eq:energyf} lies in the intriguing and challenging mathematical questions that are connected to the related evolution equations.
In order to perform numerical simulations of the evolution equations obtained \emph{formally} as the gradient flow of \eqref{eq:energyf}, R\"atz and Voigt in \cite{ratz2006diffuse} (see also \cite{Burger}) considered, for $\e>0$, the following phase field model inspired by the Modica-Mortola functional (see \cite{ modica1987gradient, modica1977limite})
\begin{equation}\label{eq:phasefield}
\mathcal{G}_\e(\phi,u):=\int_{\R^d}\left(\, \frac{1}{\e}W(\phi) + \e|\nabla\phi|^2 \,\right) \psi(u)\,\d x\,.
\end{equation}
Here $\phi\in H^1(\R^d)$ is the phase variable, $W:\R\to[0,+\infty)$ is a continuous double well potential vanishing at $0$ and $1$, and $u\in C^0(\R^d;[0,+\infty))$.
The authors worked with the special case $\psi(t):=1+\frac{t^2}{2}$.
Since the Modica-Mortola functional approximates, in the sense of $\Gamma$-convergence, the perimeter functional, it is expected that the phase field model \eqref{eq:phasefield} approximates the sharp interface energy \eqref{eq:energyf}. 
The identification of the correct $\Gamma$-limit in the weakest topology ensuring compactness along sequences of uniformly bounded energy is currently missing in literature.
The main aim of this work consists in such an identification. 
The identification of the $\Gamma$-limit is an important ingredient for the study of the convergence of the solutions of the gradient flow of the phase field energy \eqref{eq:phasefield} to the solution of the gradient flow of the sharp interface energy \eqref{eq:energyf}.
In \cite{ratz2006diffuse}, this convergence was justified by using formal matched asymptotic expansions.
We would like to draw the attention to the fact that energies of the type \eqref{eq:energyf} are also used in model the evolution and equilibria-configuration of \textit{surfactants} in which a chemical additive is considered to be present on the interface of a bubble, driving the evolution \cite{AceBou, AliCicSig, BaiBarMat, FonMorSla}).

As observed in \cite{caroccia2017equilibria}, the energy \eqref{eq:energyf} is not lower semi-continuous with respect to the $L^1\times w^*$ topology, where the density $u$ is seen as the measure $u\mathcal{H}^{d-1}\restr\partial^* E$ (note that this topology allows for very general cracks in the crystal).
Therefore, the functional $\mathcal{G}$ can not be the $\Gamma$-limit of the functionals $\mathcal{G}_\e$.
Note that the $L^1$ convergence for sets only implies the weak* convergence of the distributional derivative of the characteristic functions of the sets.
Therefore, the relaxation of the functional $\mathcal{G}$ does not follow from the results of \cite{ButtFredd}, for which the weak* convergence of the total variation of the distributional derivative of the characteristic functions of the sets would be  required.
In \cite{caroccia2017equilibria}, the authors identified the relaxed functional $\overline{\mathcal{G}}$ of $\mathcal{G}$ with respect to the $L^1\times w^*$ topology.
The question is then the following: given that the Modica-Mortola functional
\[
\mathcal{F}_\e(\phi):=\int_{\R^d}\left(\, \frac{1}{\e}W(\phi) + \e|\nabla\phi|^2 \,\right) \d x
\]
is known to $\Gamma$-converge to the perimeter functional $\mathcal{F}$, is it true that the functionals
$\mathcal{G}_\e$, which can be seen as \emph{adatom-density weighted} versions of the functionals $\mathcal{F}_\e$, $\Gamma$-converge to $\mathcal{G}$, the \emph{adatom-density weighted} versions of the functional $\mathcal{F}$?\\

The above problem was the motivation to undertake the study of the Gamma convergence of such kind of functionals in a more general framework.
The advantage in doing so is in getting a better insight on the technical reasons leading to the answer of the question, other than developing a theory comprehending a variety of other interesting situations.
We now introduce this general framework.
Let $\Omega\subset\R^d$ be a bounded open set with Lipschitz boundary, and denote by $\Ao(\Omega)$ the family of open subsets of $\Omega$.
For $\e>0$, consider the functionals
\[
\F_\e:L^1(\Omega)\times \Ao(\Omega)\to[0,+\infty]\,,
\quad\quad\quad
\F:L^1(\Omega)\times\Ao(\Omega)\to[0,+\infty]
\]
where each $\F_\e$ is lower semi-continuous in the first variable on each open set $A\in \Ao(\Omega)$. For every $\phi\in L^1$, the maps
\[
\F^\phi_\e(\cdot):=  \F_\e(\phi;\cdot) :\Ao(\Omega)\rightarrow [0,+\infty]\,,
\quad\quad\quad
\F^\phi(\cdot):=  \F(\phi;\cdot): \Ao(\Omega)\rightarrow [0,+\infty]\,,
\]
are assumed to be the restriction of Radon measures on $\Ao(\Omega)$.
Suppose that, for every open set $A\in\Ao(\Omega)$ the family $\{\F_\e(\cdot; A)\}_{\e>0}$ is $\Gamma$-converging in the $L^1$ topology to $\F(\cdot;A)$.
Let $\psi:[0,+\infty)\to(0,+\infty)$ be a Borel function with $\inf\psi>0$ and define the functionals
\begin{equation}\label{eq:g}
\mathcal{G}^{\F_\e}(\phi,\mu):=\int_\Omega \psi\left( \frac{\d\mu}{\d\F^\phi_\e} \right) \d \F^\phi_\e\,
\end{equation}
over couples $(\phi,\mu)$, where $\phi\in L^1(\Omega)$ and $\mu$ is a finite non-negative Radon measure on $\Omega$ absolutely continuous with respect to the measure $\F_{\e}^{\phi}$. $\mathcal{G}^{\F_\e}$ is set to be $+\infty$ otherwise. In the same spirit is defined 
	\[
\mathcal{G}^{\F}(\phi,\mu):=\int_\Omega \psi\left( \frac{\d\mu}{\d\F^\phi} \right) \d \F^\phi\,.
\]

The question we want to investigate is the following: is the $\Gamma$-limit of the family $\{\mathcal{G}^{\F_\e}\}_{\e>0}$ related to the relaxation of the functional $\mathcal{G}^\F$ in the $L^1\times w^*$ topology?\\

This  question is reminiscent of a classical problem studied by Buttazzo and Freddi in \cite{ButtFredd} (see also \cite{Butt}) on the $\Gamma$-convergence of functionals defined over pairs of measures.
Given a sequence of non-negative Radon measures $\nu_n$ on $\Omega$, they studied the $\Gamma$-limit of functionals of the form
\begin{equation}\label{eq:h}
\mathcal{H}_n(\mu):=\int_\Omega f\left(x,\frac{\d \mu}{\d \nu_n}\right) \d \nu_n\,,
\end{equation}
defined over vector valued Radon measures $\mu$ on $\Omega$.
Here $f:\Omega\times \R^N\to[0,+\infty)$ is a continuous function, convex in the second variable.
In \cite{ButtFredd} it is proved that, under suitable assumptions on $f$, if $\nu_n\wt\nu$, then $\mathcal{H}_n\stackrel{\Gamma}{\rightarrow}\mathcal{H}$ with respect to the $w^*$ convergence, where
\[
\mathcal{H}(\mu):=\int_\Omega f\left(x,\frac{\d \mu}{\d \nu}\right) \d \nu
	+ \int_\Omega f^\infty\left(x,\frac{\d \mu^{\perp}}{\d |\mu^{\perp}|}\right) \d |\mu^{\perp}|\,,
\]
where $\mu=\frac{\d \mu}{\d \nu}\nu+\mu^{\perp}$ is the Radon-Nicodym decomposition of $\mu$ with respect to $\nu$, and $f^\infty$ is the recession function of $f$.
Our framework includes their result for scalar valued measures, and with $f$ independent of $x\in\Omega$ and subadditive in the second variable.
Indeed, it is possible to reduce the study of the functionals (1.4) to our setting by taking $\F_\e$ and $\F$ constants.

The novelty of this paper is in the treatment of the problem in this general setting,
where the convergence $\nu_n\wt\nu$ is replaced by the $\Gamma$-convergence of the underlying functionals $\F_\e$ to $\F$.
Since for each $\phi\in L^1(\Omega)$ we will ask that there exists $\{\phi_n\}\subset L^1(\Omega)$ with $\phi_n\to\phi$ in $L^1(\Omega)$ such that $\F_n^{\phi_n}\wt \F^\phi$, in a sense the result in \cite{ButtFredd} can be seen as a pointwise convergence in our setting.
When $\F$ is not continuous in $L^1$, the interaction between the underlying functionals $\F_\e$ and the function $\psi$ gives rise, for a class of non continuous functionals $\F$, to an additional relaxation effect for $\mathcal{G}^\F$.
Because of the technical difficulties we have to deal with, the techniques we employ to prove our results, except for the liminf inequality, are independent from the ones present in \cite{ButtFredd}.\\

\subsection{Main results and idea of the proofs.}
In the main result of this paper, Theorem \ref{thm:mainthm1} we are able to prove that the $\Gamma$-limit of the functionals $\mathcal{G}^{\F_\e}$ is $\overline{\mathcal{G}}$: the relaxation of the functional $\mathcal{G}^\F$ in the $L^1\times w^*$ topology.
In particular, an application of Theorem \ref{thm:mainthm1} (see Proposition \ref{prop:resultperimeter}) is used to prove the $\Gamma$-convergence of $\mathcal{G}_\e$ to $\overline{\mathcal{G}}$.

We focus on a particular class of functionals for which continuity fails.
Since we are assuming lower semi-continuity for the functional $\F$, continuity at some $\phi\in L^1(\Omega)$ fails when
\[
\F(\phi)<\lim_{n\to+\infty}\F(\phi_n)
\]
for some $\{\phi_n\}_{n\in\N}\subset L^1(\Omega)$ with $\phi_n\to\phi$ in $L^1(\Omega)$.
The class of functionals we consider are those for which the above loss of upper semi-continuity holds for all $\phi\in L^1(\Omega)$ and locally in a quantitative way.
Namely, we consider the family of functionals (see Definition \ref{def:Admissible}) for which 
for all $\phi\in L^1(\Omega)$ and for all $r\geq 1$
it is possible to find a sequence $\{\phi_n\}_{n\in \N}\subset L^1(\Omega)$ such that
$\phi_n\to\phi$ in $L^1(\Omega)$, and
\begin{equation}\label{eq:f}
\lim_{n\to+\infty} \F(\phi_n;E) \to r \F^\phi(\phi;E)\,,
\end{equation}
for all Borel sets $E\subset\Omega$ with $\F^\phi(\partial E)=0$.
This class of functions contains some interesting cases, such as the perimeter functional and the total variation functional (see Section \ref{sec:per} and \ref{sec:total} respectively).
In the former case, the validity of \eqref{eq:f} was proved in \cite[Theorem 2]{caroccia2017equilibria} by using a wriggling construction:
given a set of finite perimeter $F\subset\Omega$, local oscillations of the boundary of $\phi=\ca_F$, whose intensity is determined by the factor $r$, were used in order to get \eqref{eq:f}.
For this class of functionals (see Theorem \ref{thm:mainthm1}) the $\Gamma$-limit of the family
$\{\mathcal{G}^{\F_\e}\}_{\e>0}$ is 
\[
	\mathcal{G}^\F_{\mathrm{lsc}} (\phi, \mu):=
		\int_{\Omega} \psi^{cs}\left(\frac{\d \mu}{\d \F^{\phi}} \right)\d \F^{\phi}
			+ \Theta^{cs} \mu^{\perp}(\Omega)\,.
\]
Here $\psi^{cs}$ is the convex sub-additive envelope of $\psi$ (see Definition \ref{def:convex}) and 
\[
\Theta^{cs}:=\lim_{t\rightarrow +\infty} \frac{\psi^{cs}(t)}{t}\,.
\]
Note that $\Theta^{cs}<+\infty$, since $\psi^{cs}$ has at most linear growth at infinity (see Lemma \ref{lem:GeneralFactsAboutConvexSubEnvelope}).
Moreover, $\psi^{cs}\leq\psi^c$. Therefore, the quantitative loss of upper semi-continuity of the functional $\F$ results in having a lower energy density for the limiting functional.\\

We report here the ideas behind the proof of Theorem \ref{thm:mainthm1}.
The main technical difficulty of the paper is the fact that the underlying measures $\F^\phi$ we consider come from the energy $\F$.
The assumptions we require do not seem to be too restrictive.

The liminf inequality (Proposition \ref{propo:lowerbound}) follows easily from classical results on lower semi-continuity of functionals defined over pairs of measures originally proved in \cite{ButtFredd}.

The construction of the recovery sequence is done by using several approximations.
In particular, we pass from $\psi$ to $\psi^{cs}$ in two steps: first from $\psi$ to $\psi^c$, and then from $\psi^c$ to $\psi^{cs}$.
This is possible because $(\psi^{c})^{cs}=\psi^{cs}$: the convex sub-additive envelope of the convex envelope corresponds to the convex sub-additive envelope of the function itself (see Lemma \ref{lem:GeneralFactsAboutConvexSubEnvelope}).

Given $\phi\in L^1(\Omega)$ and a non-negative Radon measure $\mu$ on $\Omega$, we consider its decomposition with respect to $\F^\phi$.
We first treat the singular part and we show that it can be energetically approximated by a finite sum of Dirac deltas whose, in turn, can be approximated by regular functions (Proposition \ref{thm:density}). The main technical difficulty here is in having to deal with general Radon measures $\F^\phi$.

We then turn to the absolutely continuous part of the measure $\mu$.
After having showed  that it is possible to assume the density $u$ to be a piecewise constant function (Proposition \ref{propo:fullRCVRYabsoluteCouple}), we prove that it suffices to approximate the energy with density $\psi^c$ (Proposition \ref{propo:fromConvToSubConv}).

Finally, in Proposition \ref{propo:BorToConv}, given a couple $(\phi,u)$ we construct a sequence of pairs $\{(\phi_n,u_n)\}_{n\in\N}$ such that
\[
\limsup_{n\to+\infty}\int_\Omega \psi(u_n)\d\F^{\phi_n}\leq\int_\Omega \psi^c(u)\d\F^\phi\,.
\]
The technical construction is based on a measure theoretical result, Lemma \ref{lem:Crumble}.
This result allow to disintegrate $\Omega$ in sub-domains containing, asymptotically, a certain percentage of
$\F^\phi(\Omega)$, and such that $\F^\phi$ does not charge mass on their boundaries.\\

We also present applications of our general results to some interesting cases: the perimeter functional, a weighted total variation functional and the classical Dirichlet energy (respectively Subsections \ref{sec:per}, \ref{sec:total} and \ref{sec:dir}).
In the former case, the lack of lower semi-continuity was already provided in \cite[Theorem 2]{caroccia2017equilibria}.
Therefore, using the general theory we developed, we can answer the question raised by the application in Continuum Mechanics (see Proposition \ref{prop:resultperimeter}).

In the second case, the family of approximating functionals we consider is the one introduced by Slep\v{c}ev and Garc\'{i}a-Trillos in the context of point clouds (see \cite{trillos2016continuum}), and that are of wide interest for the community (\cite{breslau2, calatroni17, chagialus, cristoferi18, garciatrillos15, garciatrillos15aAAA,  szlambress2, thorpe17bAAA, thorpe17cAAA, thorpe17AAA,  vanGenCarola}).
The main technical result in studying this case is a wriggling result for the weighted total variation functional (see Proposition \ref{propo:wrigTV}), that allows us to use Theorem \ref{thm:mainthm1} to identify the $\Gamma$-limit in Proposition \ref{prop:resulttotal}.\\

This paper is organized as follows.
In Section \ref{sec:mainresult} we state the main hypotheses and results of the paper.
After introducing the main notation in Section \ref{sec:notation}, we devote Section \ref{sec:proofs} to the proofs of Theorem \ref{thm:mainthm1}.
Finally, the above mentioned applications are treated in Section \ref{sec:appl}.


\section{Notation and preliminaries}\label{sec:notation}

In this section we introduce the basic notation and recall the basic facts we need in the paper.

\emph{Convex and convex sub-additive envelope.}
We collect here some properties of the convex sub-additive envelope of a function that we used in the paper.
Since in this paper we always work with nonnegative functions, in the following all the definitions and statements are adapted to this particular case.

\begin{definition}\label{def:convex}
Let $f:[0,+\infty)\to(0,+\infty)$ be a Borel function.
We define $f^c:[0,+\infty)\to(0,+\infty)$, the \emph{convex envelope} of $f$, and $f^{cs}:[0,+\infty)\to[0,+\infty)$, the \emph{convex sub-additive envelope} of $f$, as
\[
f^{c}(t):=\sup\{\varphi(t) \ | \ \varphi\leq f, \ \text{$\varphi$ convex}\}\,,
\]
and
\[
f^{cs}(t):=\sup\{\varphi(t) \ | \ \varphi\leq f, \ \text{$\varphi$ convex and subadditive}\}\,,
\]
respectively. Moreover, we set
\[
\Theta^{cs}:=\lim_{t\rightarrow +\infty} \frac{f^{cs}(t)}{t}\,.
\]
\end{definition}

The first result is the key one that allows us to construct the recovery sequence in two steps.

\begin{lemma}\label{lem:conv}
Let $f:[0,+\infty)\to(0,+\infty)$ be a Borel function. Then $(f^{c})^{cs}=f^{cs}$.
\end{lemma}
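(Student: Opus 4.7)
The plan is to prove both inequalities $(f^c)^{cs} \le f^{cs}$ and $f^{cs} \le (f^c)^{cs}$ by directly playing off the three supremum definitions against each other; no construction is needed, only comparison of the classes of admissible test functions.

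First I would observe the trivial inequality $f^c \le f$, which follows from the definition of $f^c$ (the identity is a competitor only when relevant, but more importantly $f$ itself belongs to the class of functions bounded above by $f$, while $f^c$ is the \emph{largest} convex minorant). Consequently, every convex subadditive function $\varphi$ satisfying $\varphi \le f^c$ automatically satisfies $\varphi \le f$, and hence is a competitor in the supremum defining $f^{cs}$. Taking the supremum over such $\varphi$ yields $(f^c)^{cs} \le f^{cs}$.

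For the reverse inequality, the key observation is that $f^{cs}$ is, by its very definition, a convex function below $f$. Since $f^c$ is the largest convex minorant of $f$, this forces $f^{cs} \le f^c$. But $f^{cs}$ is also subadditive (again by definition), so $f^{cs}$ is a convex subadditive function bounded above by $f^c$; that is, $f^{cs}$ is admissible in the supremum defining $(f^c)^{cs}$. This gives $f^{cs} \le (f^c)^{cs}$, and combining the two inclusions yields the claim.

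The only minor subtlety worth pointing out is that the classes of competitors in Definition \ref{def:convex} are nonempty (so that the suprema and the envelopes are well defined), which is guaranteed by $\inf f > 0$ together with the boundedness from below by any small positive constant, itself a convex subadditive function. I do not expect any real obstacle here: the statement is essentially a tautology once one checks that $f^{cs}$ is a legitimate competitor for $(f^c)^{cs}$, which is immediate from the nesting of the two admissible classes.
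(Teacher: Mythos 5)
Your proof is correct and follows essentially the same competitor-class comparison as the paper's: $f^c\le f$ shows every admissible function for $(f^c)^{cs}$ is admissible for $f^{cs}$, while the convexity of any admissible function for $f^{cs}$ forces it below $f^c$, giving the reverse inequality. The only cosmetic difference is that you route the second inequality through the (standard, true) observation that $f^{cs}$ is itself convex and subadditive, whereas the paper takes the supremum over competitors directly; the incidental remark about $\inf f>0$ is unnecessary, since the zero constant already makes the competitor classes nonempty.
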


\begin{proof}
It is immediate that, if $g\leq f$ is convex and subadditive, that (since it is in particular convex), we have $g\leq f^c$.
Henceforth $f^{cs}\leq f^c$, yielding
\[
f^{cs}\leq (f^{c})^{cs}\,.
\]
On the other hand, if $g\leq f^{c}$ is convex and subadditive function, it holds in particular $g\leq f$.
Hence, from $g\leq f^{cs}$ we get
\[
(f^c)^{cs}\leq f^{cs}
\]
yielding the desired equality.
\end{proof}

\begin{remark}
Let us note that, in general, $(f^c)^s\neq f^{cs}$.
Here, with $f^s$, we denote the subadditive envelope of a function $f$.
Indeed, in general
\[
(f^c)^s>f^{cs}\,.
\]
As an example, let us consider the function $f(t):=\max\{2|t|-1,1\}$.
Since $f$ is convex we have $f^c=f$ and thus $(f^c)^s=f^s$.
It is possible to check that $f^s$ is not convex.
Therefore, it can not coincide with the convex function $f^{cs}$. 
\end{remark}

The following characterization of $f^c$ is well known (see, for instance, \cite[Remark 2.17 (c)]{braides2002gamma}).

\begin{lemma}\label{lem:convenvel}
Let $f:(0,+\infty)\to(0,+\infty)$ be a Borel function. Then
\[
	f^c(t)= \inf\{\lambda f(t_1)+(1-\lambda)f( t_2) \ | \ \lambda\in [0,1], t_1,t_2\in(0,+\infty),\, \ \lambda t_1+(1-\lambda) t_2=t \}\,,
\]
for all $t\in(0,+\infty)$.
\end{lemma}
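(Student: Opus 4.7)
The plan is to denote the right-hand side by $g(t)$ and establish $f^c=g$ by showing the two inequalities separately.

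The direction $f^c(t)\leq g(t)$ is immediate from the fact that $f^c$ is convex and $f^c\leq f$: for any admissible decomposition $t=\lambda t_1+(1-\lambda)t_2$ with $\lambda\in[0,1]$ and $t_1,t_2\in(0,+\infty)$,
\[
f^c(t)\leq \lambda f^c(t_1)+(1-\lambda)f^c(t_2)\leq \lambda f(t_1)+(1-\lambda)f(t_2),
\]
so taking the infimum over all such decompositions yields $f^c(t)\leq g(t)$.

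For the reverse inequality $g\leq f^c$, I would show that $g$ is itself a convex function dominated by $f$; combined with the definition of $f^c$ as the pointwise supremum of all convex functions below $f$, this forces $g\leq f^c$. The bound $g\leq f$ is immediate by taking the trivial decomposition $\lambda=1$, $t_1=t$. To verify convexity of $g$, the natural approach is via the epigraph: one checks that the epigraph of $g$ coincides with the convex hull in $\R^2$ of the graph of $f$. By Carath\'eodory's theorem in $\R^2$, every point of this convex hull is already a convex combination of at most three points of the graph of $f$, and the one-dimensional nature of the domain permits a further reduction from three to two points.

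The main obstacle is precisely this reduction from three to two points, and it is the step where the one-dimensional setting is essential. The geometric argument is as follows: given three points $(t_i,f(t_i))_{i=1,2,3}$ of the graph forming a triangle that contains a point $(t,y)$ in its convex hull, the vertical line $\{x=t\}$ meets the boundary of the triangle in a (possibly degenerate) segment whose lower endpoint $(t,y')$ lies on one of the three edges; this edge is a $2$-point convex combination of two graph vertices with value $y'\leq y$. Hence any multi-point representation can be replaced by a $2$-point one with no larger ordinate, proving convexity of $g$ and concluding the proof. No further hypotheses on $f$ beyond being a positive Borel function are needed.
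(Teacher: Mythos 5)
The paper itself offers no proof of this lemma (it simply cites \cite[Remark 2.17 (c)]{braides2002gamma}), so you are reconstructing the standard textbook argument. Your first inequality $f^c\leq g$ is correct, the trivial bound $g\leq f$ is correct, and your geometric reduction from three graph points to two is also correct: if $(t,y)$ lies in a triangle with vertices on the graph, the lower endpoint $(t,y')$ of the intersection of the vertical line $\{x=t\}$ with the triangle lies on one of its edges, hence is a two--point convex combination of graph points with $y'\leq y$. This reduction, combined with Carath\'eodory in $\R^2$, is indeed the heart of the matter and is exactly where the one--dimensional setting enters; it is also, in substance, the argument behind the cited reference (there phrased as: $f^c$ equals the infimum over \emph{all} finite convex combinations, which in dimension $n$ can be reduced to $n+1$ points).

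The gap is in how you pass from this to the convexity of $g$. The claim that ``the epigraph of $g$ coincides with the convex hull in $\R^2$ of the graph of $f$'' is false, and the convexity of $g$ does not follow from the argument as you have assembled it. For $f\equiv 1$ the convex hull of the graph is the horizontal line $\{y=1\}$, not the half-plane $\{y\geq 1\}=\mathrm{epi}\,g$; and when the infimum defining $g(t)$ is not attained (e.g.\ $f(t)=1+\tfrac{t}{1+t}$, for which $g\equiv 1$) the point $(t,g(t))$ belongs to $\mathrm{epi}\,g$ but not to the convex hull of the graph, nor of the epigraph, of $f$, since any finite convex combination of points with ordinate $>1$ has ordinate $>1$. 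So you cannot conclude ``$\mathrm{epi}\,g$ is a convex hull, hence convex, hence $g$ is convex''. The repair is routine and uses only the pieces you already have: either show the identity $g(t)=\inf\{y \ | \ (t,y)\in \mathrm{co}(\mathrm{graph}\, f)\}$ (the inequality $\leq$ is precisely your Carath\'eodory--plus--reduction step, the inequality $\geq$ is immediate) and then use that the partial infimum $t\mapsto \inf\{y \ | \ (t,y)\in C\}$ of a convex set $C\subset\R^2$ is a convex function; or argue directly: given $t=\alpha s+(1-\alpha)r$, merge near-optimal two--point decompositions of $s$ and of $r$ into a four--point decomposition of $t$, apply Carath\'eodory to drop to three points and your triangle reduction to drop to two without increasing the ordinate, which gives $g(t)\leq \alpha g(s)+(1-\alpha)g(r)+\e$. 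With either patch, $g$ is a convex minorant of $f$, hence $g\leq f^c$, and the proof is complete.
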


A useful geometrical characterization of the convex sub-additive envelope of a convex function has been proved in \cite[Proposition A.9 and Lemma A.11]{caroccia2017equilibria}.

\begin{lemma}\label{lem:GeneralFactsAboutConvexSubEnvelope}
Let $f:[0,+\infty)\to(0,+\infty)$ be a convex function.
Then there exist $\{a_i\}_{i\in \N}\subset\R$, $\{b_i\}_{i\in \N}\subset[0,+\infty)$ such that
	\[
	f^{cs}(t)=\sup_{i\in \N}\{a_i t+ b_i\},
	\quad\quad\quad\quad
	\Theta^{cs}=\sup_{i\in \N}\{a_i\} \,.
	\]
Moreover, there exists $t_0\in(0,+\infty]$ such that $f^{cs}=f$ on $[0,t_0)$, while $f^{cs}$ is linear on $[t_0,+\infty)$.
\smallskip
\end{lemma}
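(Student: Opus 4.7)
The plan is to prove the two parts separately: first the affine representation, then the coincidence with $f$ near zero and linearity at infinity.

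For the representation $f^{cs}(t)=\sup_i\{a_it+b_i\}$ with $b_i\geq 0$, I would begin by establishing that $f^{cs}$ is itself convex and subadditive (and nonnegative). Convexity is immediate from the supremum in the definition, and subadditivity follows because $\max\{\varphi_1,\varphi_2\}(s+t)\leq \varphi_j(s)+\varphi_j(t)\leq \max\{\varphi_1,\varphi_2\}(s)+\max\{\varphi_1,\varphi_2\}(t)$, passed to suprema. Then $f^{cs}$, being a proper convex function on $[0,+\infty)$, is the supremum of its affine supports $\ell_{t^{*}}(t)=g'(t^{*+})\,t+(g(t^{*})-g'(t^{*+})\,t^{*})$, writing $g=f^{cs}$; taking $t^{*}$ in a countable dense subset yields a countable family.

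The decisive point is that for subadditive convex $g\geq 0$, every such support has nonnegative intercept. From subadditivity applied iteratively, $g(nt^{*})\leq n\,g(t^{*})$, hence $g(nt^{*})/(nt^{*})\leq g(t^{*})/t^{*}$; letting $n\to+\infty$ gives $\Theta^{cs}\leq g(t^{*})/t^{*}$. On the other hand, the right derivative $g'(t^{*+})$ of a convex function on $[0,+\infty)$ is non-decreasing in $t^{*}$ and converges to the slope at infinity $\Theta^{cs}$, so $g'(t^{*+})\leq \Theta^{cs}$. Combining the two inequalities yields $g'(t^{*+})\,t^{*}\leq \Theta^{cs}\,t^{*}\leq g(t^{*})$, so $b_i:=g(t_i^{*})-g'(t_i^{*+})t_i^{*}\geq 0$. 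Setting $a_i:=g'(t_i^{*+})$, one has $a_i\leq \Theta^{cs}$ for every $i$, and choosing $t_i^{*}\to+\infty$ gives $\sup_i a_i=\Theta^{cs}$.

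For the second assertion, I would set $b^{*}:=\inf_{t\geq 0}(f(t)-\Theta^{cs}t)$ and show $b^{*}\geq 0$. Indeed, for every $a<\Theta^{cs}$ the definition of $\Theta^{cs}$ provides some $b_a\geq 0$ with $at+b_a\leq f(t)$; taking $a\nearrow \Theta^{cs}$ the intercepts decrease monotonically to $\inf_t(f(t)-\Theta^{cs}t)=b^{*}$, which is therefore $\geq 0$. Define $t_0:=\inf\{t>0:f(t)-\Theta^{cs}t=b^{*}\}\in(0,+\infty]$. On $[t_0,+\infty)$ the candidate $\ell(t)=\Theta^{cs}t+b^{*}$ is affine with $b^{*}\geq 0$, hence convex and subadditive, and satisfies $\ell\leq f$; thus $\ell\leq f^{cs}$. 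The reverse inequality $f^{cs}\leq \ell$ on $[t_0,+\infty)$ follows from $f^{cs}(t_0)\leq f(t_0)=\ell(t_0)$ together with the slope at infinity of $f^{cs}$ being $\Theta^{cs}$ and the convexity of $f^{cs}$.

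For $t<t_0$, I would construct an explicit witness $\varphi:=f$ on $[0,t_0]$ and $\varphi(t):=\Theta^{cs}t+b^{*}$ on $[t_0,+\infty)$, and verify it is convex (slopes are non-decreasing because $f'(t_0^-)\leq \Theta^{cs}$, which follows from the definition of $t_0$), dominated by $f$ (by convexity of $f$ and the choice of $b^{*}$), and subadditive on $[0,+\infty)$. The hard part, and the principal obstacle of the whole argument, is this last verification: the case $s+t\leq t_0$ uses that $f$ is itself subadditive on $[0,t_0]$---a property forced by the minimality of $t_0$ and the presence of an affine minorant of slope $\Theta^{cs}$ through $(t_0,f(t_0))$---while the mixed case $s\leq t_0\leq s+t$ is handled by combining the pointwise lower bound $f(r)\geq \Theta^{cs}r+b^{*}$ (valid for every $r\geq 0$) with $b^{*}\geq 0$ to split the estimate into two terms. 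With this witness one concludes $f^{cs}(t)\geq f(t)$ on $[0,t_0)$, and equality follows from $f^{cs}\leq f$.
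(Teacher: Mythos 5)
The paper itself does not prove this lemma (it is quoted from \cite[Proposition A.9 and Lemma A.11]{caroccia2017equilibria}), so your argument has to stand on its own, and most of it does. The representation part is fine: $f^{cs}$ is convex and subadditive as a supremum of such functions, the inequality $g'(t^{+})\le\Theta^{cs}\le g(t)/t$ gives nonnegative intercepts, and $\sup_i a_i=\Theta^{cs}$ follows. Your witness $\varphi=f$ on $[0,t_0]$, $\varphi(t)=\Theta^{cs}t+b^{*}$ afterwards is indeed convex, dominated by $f$, and subadditive, and the step you only sketch (subadditivity of $f$ on $[0,t_0]$) can be completed along the lines you indicate: for $0<u\le t_0$ one has $f'(u^{-})\le\Theta^{cs}\le f(u)/u$ (the first inequality because $\ell$ supports $f$ at $t_0$, the second because $\Theta^{cs}\le f^{cs}(u)/u\le f(u)/u$ by iterating subadditivity of $f^{cs}$), so the supporting line of $f$ at $u=s+t$ has nonnegative intercept and $f(s)+f(t)\ge 2f(u)-f'(u^{-})u\ge f(u)$. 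It is also worth recording the identity $\Theta^{cs}=\inf_{t>0}f(t)/t$, which you essentially have: ``$\le$'' is the inequality above, ``$\ge$'' because the line $t\mapsto(\inf_{s>0}f(s)/s)\,t$ is itself a convex subadditive minorant of $f$.

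The genuine gap is the case where $b^{*}=\inf_{t\ge 0}\bigl(f(t)-\Theta^{cs}t\bigr)$ is not attained at any $t>0$, so that your $t_0=+\infty$. This case is not vacuous (e.g.\ $f(t)=\sqrt{1+t^2}$, where $\Theta^{cs}=1$ and $f(t)-t$ decreases to $0$ without attaining it), and there the conclusion ``$f^{cs}=f$ on $[0,t_0)$'' is precisely the nontrivial assertion that $f$ itself is subadditive; your justification rests on the affine minorant touching $f$ at $(t_0,f(t_0))$, which does not exist. The missing argument is short but must be supplied: if $f-\Theta^{cs}t$ were increasing beyond some point it would, by convexity, grow at least linearly, forcing $\inf_{t>0}f(t)/t>\Theta^{cs}$ and contradicting the identity above; hence $f'\le\Theta^{cs}$ on $(0,+\infty)$ and the same nonnegative-intercept computation yields subadditivity of $f$ everywhere. (Equivalently, and more cleanly, one can show that $t\mapsto f(t)/t$ is non-increasing up to its first minimizer $t_0$, which handles both cases at once.) Two smaller points: the left-continuity of the optimal intercepts as $a\nearrow\Theta^{cs}$ is asserted rather than proved, but you can bypass it since $b^{*}\ge 0$ follows directly from $f(t)\ge\Theta^{cs}t$; and your $t_0$ can degenerate to $0$ (e.g.\ $f$ affine), so to meet the requirement $t_0\in(0,+\infty]$ you should simply set $t_0:=+\infty$ whenever $f$ turns out to be subadditive.
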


Combining the results of Lemma \ref{lem:conv} and Lemma \ref{lem:GeneralFactsAboutConvexSubEnvelope} we get the following.

\begin{lemma}\label{lem:finalConv}
Let $f:[0,+\infty)\to(0,+\infty)$ be a Borel function.
Then there exists $t_0\in(0,+\infty]$ such that
\begin{equation}\label{eqn:characterizationCS}
	f^{cs}(t)=\left\{\begin{array}{ll} 
	 f^c(t) \quad & \ \text{if  } t\in[0,t_0),\\
	 &\\
	 t\frac{f^c(t_0)}{t_0} \ \quad & \ \text{if  } t\in[t_0,+\infty).
	\end{array}\right.
	\end{equation}
In particular, if $t_0<+\infty$, then
	\[
	\Theta^{cs}
	=\frac{f^c(t_0)}{t_0}\,.
	\]
\end{lemma}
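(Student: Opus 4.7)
The plan is to chain Lemma \ref{lem:conv} with Lemma \ref{lem:GeneralFactsAboutConvexSubEnvelope} and then read off the stated characterization of $f^{cs}$.

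First I would invoke Lemma \ref{lem:conv} to replace $f$ by its convex envelope $f^c$, so that $f^{cs} = (f^c)^{cs}$. Since $f^c$ is convex, Lemma \ref{lem:GeneralFactsAboutConvexSubEnvelope} applied to $f^c$ produces $t_0 \in (0, +\infty]$ with $f^{cs} = f^c$ on $[0, t_0)$ and $f^{cs}$ linear on $[t_0, +\infty)$. The first branch of \eqref{eqn:characterizationCS} is immediate, and in the degenerate case $t_0 = +\infty$ the second branch is vacuous.

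Assume now $t_0 < +\infty$ and write the linear piece as $f^{cs}(t) = at + b$ on $[t_0, +\infty)$, with $b \geq 0$ from the representation in Lemma \ref{lem:GeneralFactsAboutConvexSubEnvelope}. Passing to the limit $t \to +\infty$ in $f^{cs}(t)/t = a + b/t$ identifies the slope as $a = \Theta^{cs}$. Since $f^{cs}$ is convex and real-valued on $[0, +\infty)$, it is continuous at the interior point $t_0$, so matching the two descriptions of $f^{cs}$ at $t_0$ yields $\Theta^{cs} t_0 + b = f^c(t_0)$, i.e.\ $b = f^c(t_0) - \Theta^{cs} t_0$.

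The main step, and the one I expect to require some care, is to show $b = 0$, which will simultaneously give both the linear formula $f^{cs}(t) = t \cdot f^c(t_0)/t_0$ on $[t_0, +\infty)$ and the identity $\Theta^{cs} = f^c(t_0)/t_0$. For this I would use the supremum representation $f^{cs} = \sup_i\{a_i t + b_i\}$ with $b_i \geq 0$ and $\sup_i a_i = \Theta^{cs}$ from Lemma \ref{lem:GeneralFactsAboutConvexSubEnvelope}: on one hand each affine minorant satisfies $a_i t \leq a_i t + b_i \leq f^c(t)$ for every $t > 0$, hence $a_i \leq f^c(t)/t$ and $\Theta^{cs} \leq \inf_{t > 0} f^c(t)/t$; on the other hand the linear function $t \mapsto \bigl(\inf_{s > 0} f^c(s)/s\bigr)\, t$ is convex, subadditive, and dominated by $f^c$, so it lies below $f^{cs}$ and, dividing by $t$ and letting $t \to +\infty$, yields the reverse inequality. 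Thus $\Theta^{cs} = \inf_{t > 0} f^c(t)/t$, and combining this with the continuity and convexity constraints at $t_0$ (compatibility of the left derivative $(f^c)'(t_0^-)$ with the right slope $\Theta^{cs}$) forces the infimum to be realized at $t_0$, i.e.\ $f^c(t_0) = \Theta^{cs} t_0$ and therefore $b = 0$. The formula \eqref{eqn:characterizationCS} and the closing identity for $\Theta^{cs}$ then follow at once.
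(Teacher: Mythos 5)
Your overall skeleton (pass to $f^c$ via Lemma \ref{lem:conv}, apply Lemma \ref{lem:GeneralFactsAboutConvexSubEnvelope} to the convex function $f^c$, identify the tail slope with $\Theta^{cs}$ by dividing by $t$ and letting $t\to+\infty$, and match values at $t_0$ by continuity of the finite convex function $f^{cs}$) is exactly the combination the paper has in mind, and your identity $\Theta^{cs}=\inf_{t>0}f^c(t)/t$ is correct and proved cleanly. The problem is the last step. You write the tail as $f^{cs}(t)=\Theta^{cs}t+b$ with $b\geq 0$ and claim that ``continuity and convexity constraints at $t_0$ (compatibility of $(f^c)'(t_0^-)$ with the right slope $\Theta^{cs}$)'' force $f^c(t_0)=\Theta^{cs}t_0$, i.e.\ $b=0$. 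This deduction does not follow from the facts you have listed. Take $f(t)=1+t$: here $f^c=f^{cs}=f$, $\Theta^{cs}=\inf_{t>0}f^c(t)/t=1$ and the infimum is \emph{not attained}. If ``linear'' in Lemma \ref{lem:GeneralFactsAboutConvexSubEnvelope} is read as ``affine'' (which is the reading your argument is built on, otherwise there would be nothing to prove about $b$), then any finite $t_0$ witnesses its conclusion, and all your constraints are satisfied — $f^{cs}$ is continuous at $t_0$, convex, the left derivative $1$ is compatible with the right slope $\Theta^{cs}=1$, and $\Theta^{cs}=\inf f^c(t)/t$ — yet $b=1\neq 0$ and $f^c(t_0)=1+t_0\neq\Theta^{cs}t_0$. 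So the asserted forcing is false as a consequence of those local constraints alone; what is missing is precisely the global input that either the infimum of $f^c(t)/t$ is attained at the chosen $t_0$, or else $t_0$ must be taken equal to $+\infty$.

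Concretely, there are two ways to close the gap. Either one reads Lemma \ref{lem:GeneralFactsAboutConvexSubEnvelope} (as in the cited reference \cite[Proposition A.9 and Lemma A.11]{caroccia2017equilibria}) with the tail genuinely linear through the origin, in which case Lemma \ref{lem:finalConv} is immediate — continuity at a finite $t_0$ gives the slope $f^c(t_0)/t_0$, and $\Theta^{cs}=\lim_{t\to+\infty}f^{cs}(t)/t$ gives the final identity — and your analysis of $b$ is unnecessary; this is the paper's (implicit) proof. Or, with the affine reading, one must use the \emph{maximality} of $f^{cs}$ among convex subadditive minorants of $f^c$, not just local compatibility at $t_0$: e.g.\ show that $t\mapsto f^c(t)/t$ is quasi-convex (its sublevel sets $\{f^c(t)\leq ct\}$ are intervals), so either its infimum $\Theta^{cs}$ is attained at some point $\bar t$ — and then the function equal to $f^c$ on $[0,\bar t]$ and to $\Theta^{cs}t$ on $[\bar t,+\infty)$ is convex, subadditive, below $f^c$, and dominates every convex subadditive minorant on $[\bar t,+\infty)$ (since $h(t)/t\leq h(\bar t)/\bar t\leq\Theta^{cs}$ for such $h$), so it equals $f^{cs}$ and one takes $t_0=\bar t$ — or the infimum is not attained, in which case $f^c(t)/t$ is non-increasing, $f^c$ is itself subadditive, $f^{cs}=f^c$, and one takes $t_0=+\infty$. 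Your proposal, as written, neither invokes this maximality nor re-chooses $t_0$, so the key claim $b=0$ is left unsupported.
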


\emph{$\Gamma$-convergence.}
We refer to \cite{DM} for a comprehensive treatment of $\Gamma$-convergence.
In this paper we just need the sequential version of it for metric spaces.

\begin{definition}
Let $(Y,d)$ be a metric space and let $F:Y\rightarrow[0,+\infty]$.
We say that a sequence of functionals $\{F_n\}_{n\in\N}$, where $F_n:Y\rightarrow[0,+\infty]$, $\Gamma$-converges to $F$ with respect to the metric $d$, and we write $F_n\stackrel{\Gamma-\d}{\longrightarrow}F$, if
\begin{itemize}
\item[(i)] For every $x \in Y$ and every $\{x_n\}_{n\in\N}\subset Y$ such that
$x_n\stackrel{\mathrm{d}}{\to}x$,
\[
F(x) \leq \liminf_{n \to +\infty} F_n(x_n)\,;
\]
\item[(ii)] For every $x \in Y$ there exists $\{x_n\}_{n\in\N}\subset Y$ such that $x_n\stackrel{\mathrm{d}}{\to}x$ and
\[
\limsup_{n \to +\infty} F_n(x_n)\leq F(x)\,.
\]
\end{itemize}
\end{definition}

In the proof of Theorem \ref{thm:mainthm1} we will make use of the following.

\begin{remark}\label{rem:Gamma}
Let $x\in Y$.
Assume that, for each $\delta>0$, there exists a sequence $\{x_n\}_{n\in\N}\subset Y$ such that $x_n \to x$ and
\[
\limsup_{n \to +\infty} F_n(x_n)\leq F(x)+\delta\,.
\]
Then, by using a diagonal procedure, it is possible to find a sequence $\{y_n\}_{n\in\N}\subset Y$ with $y_n \to x$ and
\[
\limsup_{n \to +\infty} F_n(y_n)\leq F(x)\,.
\]
\end{remark}

\emph{Radon measures.}
We collect here the main properties of Radon measures we will need in the paper.
For a reference see, for instance, \cite[Section 1.4]{AFP}, and \cite[Section 2]{Maggi}.

\begin{definition}
\label{def:weakstar}
We denote by $\mathcal{M}^+(\Omega)$ the space of finite non-negative Radon measures on $\Omega$.
We say that a sequence $\{\mu_n\}_{n\in  \mathbb N} \subset \mathcal{M}^+(\Omega)$ is \emph{weakly* converging} to $\mu\in \mathcal{M}^+(\Omega)$, and we write $\mu_n \wt \mu$, if
\[
\lim_{n\to +\infty} \int_{\Omega} \varphi \mathrm \d\mu_n = \int_{\Omega} \varphi \mathrm \d\mu
\]
for every $\varphi  \in \mathcal C_0(\Omega)$.
\end{definition}

The following characterisation of weak* convergence will be widely used in the paper.

\begin{lemma}
Let $\{\mu_n\}_{n\in  \mathbb N} \subset \mathcal{M}^+(\Omega)$ such that $\sup_{n\in\N}\mu_n(\Omega)<+\infty$.
Then $\mu_n \wt \mu$, for some $\mu\in\mathcal{M}^+(\Omega)$, if and only if
\begin{equation}\label{borelconv}
\lim_{n\to +\infty} \mu_n(E) = \mu(E)\,,
\end{equation}
for all bounded Borel sets $E \subset\subset \Omega$ such that $\mu(\pa E) = 0$.
\end{lemma}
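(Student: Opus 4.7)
The plan is to prove the two directions by the classical Portmanteau-type strategy: for the direct implication, sandwich $\mathbbm{1}_E$ between two continuous test functions using regularity of $\mu$; for the converse, apply the layer-cake formula and use that only countably many level sets of a fixed continuous function can carry positive $\mu$-mass.

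For $\Rightarrow$, I would fix $E\subset\subset\Omega$ Borel with $\mu(\partial E)=0$. Since $\mu$ is a finite Radon measure, inner regularity on $\INT E$ and outer regularity on $\overline E$ give, for every $\eta>0$, a compact set $K\subset \INT E$ and an open set $U$ with $\overline E\subset U\subset\subset\Omega$ such that $\mu(\INT E\setminus K)<\eta$ and $\mu(U\setminus \overline E)<\eta$; the containment $U\subset\subset\Omega$ is possible exactly because $E\subset\subset\Omega$. Urysohn's lemma then yields $\varphi^\pm\in C_0(\Omega)$ with $\mathbbm{1}_K\leq \varphi^-\leq \mathbbm{1}_{\INT E}$ and $\mathbbm{1}_{\overline E}\leq \varphi^+\leq \mathbbm{1}_U$. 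Since $\varphi^-\leq \mathbbm{1}_E\leq \varphi^+$, one has
\[
\int_\Omega \varphi^-\,\d\mu_n\leq \mu_n(E)\leq \int_\Omega \varphi^+\,\d\mu_n.
\]
Passing to the limit via the definition of weak* convergence, together with $\int \varphi^-\d\mu\geq \mu(K)\geq \mu(E)-\eta$ and $\int \varphi^+\d\mu\leq \mu(U)\leq \mu(E)+\eta$ (the hypothesis $\mu(\partial E)=0$ being used to identify $\mu(\INT E)=\mu(\overline E)=\mu(E)$), gives $\mu(E)-\eta\leq \liminf_n \mu_n(E)\leq \limsup_n \mu_n(E)\leq \mu(E)+\eta$, and sending $\eta\to 0$ closes the argument.

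For $\Leftarrow$, I would take $\varphi\in C_0(\Omega)$, split it as $\varphi=\varphi^+-\varphi^-$, and reduce to $\varphi\geq 0$ by linearity. The key tool is the layer-cake representation
\[
\int_\Omega \varphi\,\d\nu=\int_0^{\|\varphi\|_\infty}\nu(\{\varphi>t\})\,\d t,
\]
valid for any $\nu\in\mathcal{M}^+(\Omega)$. Because $\varphi$ vanishes at infinity, for each $t>0$ the open set $\{\varphi>t\}$ is relatively compact in $\Omega$, and $\partial\{\varphi>t\}\subset\{\varphi=t\}$. The level sets $\{\varphi=t\}$, $t>0$, are pairwise disjoint, and finiteness of $\mu$ forces at most countably many of them to carry positive $\mu$-mass; hence $\mu(\partial\{\varphi>t\})=0$ for Lebesgue-a.e.\ $t>0$. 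For such $t$ the hypothesis yields $\mu_n(\{\varphi>t\})\to \mu(\{\varphi>t\})$, and the uniform bound $\mu_n(\{\varphi>t\})\leq \sup_m \mu_m(\Omega)<+\infty$ allows dominated convergence in the $t$-integral, giving $\int \varphi\,\d\mu_n\to \int\varphi\,\d\mu$.

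The main obstacle is the converse direction, where one must cope with the fact that only finitely many $E$'s are being tested in the hypothesis: the reduction to such ``continuity sets'' is handled precisely by the countable-atoms observation on the family $\{\{\varphi=t\}\}_{t>0}$, after which the proof becomes routine.
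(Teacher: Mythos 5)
Your proof is correct, and it follows the classical Portmanteau-type argument (sandwiching $\ca_E$ by Urysohn functions for one direction, layer-cake plus the countability of mass-carrying level sets $\{\varphi=t\}$ and dominated convergence via $\sup_n\mu_n(\Omega)<+\infty$ for the other), which is exactly the standard route: the paper itself states this lemma without proof, referring to the classical literature on Radon measures. The only remark worth making is that the uniform mass bound is not needed for the forward implication, while in the converse it is precisely what justifies the dominated convergence in the $t$-integral, and you use it correctly there.
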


In order to use the metric definition of $\Gamma$-converge, we need a metric on the space $\mathcal{M}^+(\Omega)$ that induces the weak* topology.
This is possible because $C_0(\Omega)$ is separable. For a proof see, for instance, \cite[Proposition 2.6]{DeLellis}.

\begin{lemma}\label{lem:metric}
There exists a distance $\d_{\mathcal{M}}$ on $\mathcal{M}^+(\Omega)$ such that
\[
\mu_k\wt\mu\quad\Leftrightarrow\quad
\lim_{k\rightarrow\infty}d_{\mathcal M}(\mu_k,\mu)=0\quad\text{ and }\quad \sup_{k\in\N}\mu_k(\Omega)<\infty.
\]
\end{lemma}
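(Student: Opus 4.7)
The plan is to construct the distance $d_{\mathcal M}$ explicitly from a countable dense subset of $C_0(\Omega)$, and then verify the equivalence via a standard three-$\varepsilon$ density argument that uses the uniform mass bound in a crucial way.

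First, exploit the separability of $C_0(\Omega)$ (which follows since $\Omega \subset \R^d$ is bounded open, so $C_0(\Omega)$ embeds in the separable space $C(\overline{\Omega})$). Fix a countable dense subset $\{\varphi_k\}_{k\in\N}$ of the closed unit ball of $C_0(\Omega)$ and set
\[
d_{\mathcal M}(\mu,\nu) := \sum_{k=1}^{\infty} \frac{1}{2^k}\min\!\left\{1,\ \left|\int_{\Omega}\varphi_k\,\mathrm{d}\mu-\int_{\Omega}\varphi_k\,\mathrm{d}\nu\right|\right\}.
\]
Symmetry and the triangle inequality are immediate; positive definiteness follows because the $\varphi_k$ separate points of $\mathcal{M}^+(\Omega)$ by density and the Riesz representation theorem.

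For the direction ($\Rightarrow$), if $\mu_k \weakstar \mu$ then testing against the constant $1$ (or rather an exhausting sequence in $C_0(\Omega)$) combined with the uniform boundedness principle yields $\sup_k \mu_k(\Omega) < \infty$. Moreover $\int \varphi_j\,\mathrm d\mu_k \to \int \varphi_j\,\mathrm d\mu$ for every fixed $j$, so, splitting the series at a large index $N$ and bounding the tail by $2^{-N+1}$, the dominated convergence-in-$\N$ argument gives $d_{\mathcal M}(\mu_k,\mu) \to 0$.

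For the direction ($\Leftarrow$), assume $d_{\mathcal M}(\mu_k,\mu)\to 0$ and $M := \sup_k \mu_k(\Omega) < \infty$. Each term of the series converges, hence $\int \varphi_j\,\mathrm d\mu_k \to \int \varphi_j\,\mathrm d\mu$ for every $j\in\N$. Given an arbitrary $\varphi \in C_0(\Omega)$ and $\varepsilon > 0$, rescale so that $\|\varphi\|_\infty \leq 1$, and by density pick $\varphi_j$ with $\|\varphi-\varphi_j\|_\infty < \varepsilon/(1+M+\mu(\Omega))$. The three-$\varepsilon$ split
\[
\left|\int \varphi\,\mathrm d\mu_k - \int \varphi\,\mathrm d\mu\right|
\leq \|\varphi-\varphi_j\|_\infty\bigl(\mu_k(\Omega)+\mu(\Omega)\bigr)
+ \left|\int \varphi_j\,\mathrm d\mu_k - \int \varphi_j\,\mathrm d\mu\right|
\]
yields the desired convergence, so $\mu_k \weakstar \mu$.

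The main (and only real) obstacle is the approximation step in the ($\Leftarrow$) direction: without the uniform bound $M<\infty$ on the total masses the first term on the right above cannot be controlled, which is precisely why the equivalence is stated jointly with $\sup_k \mu_k(\Omega)<\infty$ and why a metric on all of $\mathcal{M}^+(\Omega)$ cannot induce the weak$^*$ topology globally but only on norm-bounded subsets.
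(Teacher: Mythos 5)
Your construction is correct and is exactly the standard metrization argument: the paper does not prove Lemma \ref{lem:metric} itself but cites \cite[Proposition 2.6]{DeLellis}, where the same weighted-series distance built from a countable dense subset of the unit ball of $C_0(\Omega)$ is used, with Banach--Steinhaus giving the mass bound in the forward direction and the three-$\varepsilon$ density argument (which genuinely needs $\sup_k\mu_k(\Omega)<\infty$) giving the converse. The only cosmetic point is that ``testing against the constant $1$'' is not literally admissible since $1\notin C_0(\Omega)$, but your fallback via the uniform boundedness principle applied to $\{\mu_k\}$ as functionals on the Banach space $C_0(\Omega)$ is the right and complete argument.
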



\section{Main results}\label{sec:mainresult}
In this section we state the main result of the paper, along with a corollary.
In the following $\Omega\subset\R^d$ will always denote a bounded open set.

\begin{definition}\label{def:Admissible}
Denote by $\Oin$ the family of open subsets of $\Omega$. Let $\F: L^1(\Omega) \times  \Oin 
\rightarrow [0,+\infty]$ be a functional, and set
	\[
	X:=\{ \phi\in L^1(\Omega) \ | \ \F(\phi;\Omega)<+\infty\}.
	\]
We say that $\F$ is an \textit{admissible energy} if it satisfies the following conditions:
\begin{itemize}
\item[(Ad1)] For every open set $A\subset\Omega$, the function $\phi\mapsto \F(\phi;A)$ is lower semi continuous on $L^1$;
\item[(Ad2)] For every $\phi\in X$, the map
$\F^\phi:=\F(\phi;\cdot):\Oin \to[0,+\infty]$ is the restriction of a Radon measure on $\Omega$ to $\Oin$;
\item[(Ad3)] For every $\phi\in X$, and every open set $A\in \Oin $ with $\F(\phi;A)=0$, the following holds:
for every $U\in \Oin$ with $U\cc A$, and for every $\e>0$, there exists $\overline{\phi}\in X$ with $\overline{\phi}=\phi$ on $\Omega\setminus U$ such that
\[
\|\overline{\phi}-\phi\|_{L^1}\leq \e\,,\quad\quad\quad
\F^{\overline{\phi}}(\partial U)=0\,,\quad\quad\quad
0<\F^{\overline{\phi}}(A)=\F^{\overline{\phi}}(U)<\e;
\]
\item[(Ad4)] $\F^\phi$ is \emph{purely lower semicontinuous}. Namely for all $\phi\in X$ and for all $f\in L^1(\Omega,\F^{\phi})$, with $f \geq 1$ $\F^\phi$-a.e., there exists a sequence $\{\phi_n\}_{n\in \N}\subset X$ such that
\[
\phi_n\rightarrow \phi\,\, \text{ in } L^1\,,\quad\quad\quad\quad
\F^{\phi_n}\wt f\F^\phi\,.
\]
\end{itemize}
We denote the class of admissible energies by $\Ad$.
\end{definition}

From now on we will consider our functional $\F$ to be defined on $X$.

\begin{remark}\label{rem:locality}
Note that if $\F\in \Ad$, then from (Ad1) it follows that
\[
\F(\phi;A)=\F(\psi;A)
\]
if $\phi=\psi$ in $A$, for $A\in\Oin$.
Hypothesis (Ad3) is a non-degeneracy hypothesis needed to treat null sets for the measure $\F^\phi$.
\end{remark}

\begin{remark}\label{rmk:equivalentDEFllc}
Note that if $\F$ satisfies Definition \ref{def:Admissible}, it is indeed lower semicontinuous and it has the property that any element $\phi\in X$ can be approached in $L^1$ with a sequence $\{\phi_n\}_{n\in \N}$ which locally increases the energy of the prescribed amount $f\geq 1$ (which acts as a Jacobian).
Indeed, the convergence $\F^{\phi_n}\wt f\F^{\phi}$ implies 
	\[
	\lim_{n\rightarrow +\infty} \F^{\phi_n}(E)=\int_{E} f \d \F^{\phi}
	\]
for all Borel sets $E\subset\subset\Omega$ with $\F^\phi(\partial E)=0$.
In particular this also justifies in (Ad4) the name \textit{purely lower semicontinuous} which encodes the fact that around any point $\phi\in X$ a liminf-type inequality for $\F$ is the sharpest bound that can be expected.
\end{remark}

We now introduce the class of approximating energies.

\begin{definition}\label{def:goodApproximating} 
We say that a sequence $\{\F_n\}_{n\in\N}$ of functionals $\F_n: L^1(\Omega) \times \Oin \rightarrow [0,+\infty]$ is a \textit{good approximating} sequence for an energy $\F\in \Ad$ if 
\begin{itemize}
\item[(GA1)] For every open sets $A\in \Oin $, and every $\{\phi_n\}_{n\in\N}\subset L^1(\Omega)$ with $\phi_n\rightarrow \phi$ in $L^1(\Omega)$, we have
\[
\F(\phi;A)\leq \liminf_{n\rightarrow+\infty} \F_n(\phi_n ;A);
\]
\item[(GA2)] For all $n\in\N$, and $\phi\in L^1(\Omega)$ with $\F_n(\phi;\Omega)<+\infty$,
the map $\F_n^\phi(\cdot):=\F_n(\phi;\cdot)$ is the restriction of a Radon measure on $\Omega$ to $\Oin$;
\item[(GA3)] For every $\phi\in X$ there exists a sequence $\{\phi_n\}_{n\in\N}\subset X$ with
$\phi_n\to\phi$ in $L^1(\Omega)$, such that $\F_n^{\phi_n}$ is non atomic for all $n\in\N$, and
\[
\F^{\phi_n}_n\wt \F^{\phi}\,,
\quad\quad\quad\quad
\F^{\phi_n}_n(\Omega)\to\F^\phi(\Omega)\,.
\]
\end{itemize}
The class of good approximating sequences for $\F$ will be denoted by $\ga(\F)$.
\end{definition}

\begin{remark}\label{rem:cmpt}
It is immediate from the definition, that if $\{\F_n\}_{n\in\N}$ is a good approximating sequence for
$\F$, then $\F_n\stackrel{\Gamma}{\rightarrow}\F$ with respect to the $L^1$ topology.
Hypothesis (GA3) is asking for the existence of a recovery sequence satisfying the additional requirement of recovering the energy also locally.

From (GA1) we deduce the following closeness property: if $\{\phi_n\}_{n\in\N}\subset L^1(\Omega)$ with $\phi_n\to\phi$ in $L^1(\Omega)$ is such that
\[
\sup_{n\in \N} \F_n^{\phi_n}(\Omega)<+\infty\,,
\]
then $\phi\in X$.
\end{remark}

\begin{remark}\label{rmk:nonatomicimpliesGA}
Notice that if $\F\in \Ad$ is non atomic, \textit{i.e.} $\F^{\phi}$ is a non atomic Radon measure for all $\phi\in X$, then the constant sequence $\F_n:=\F$ is a good approximating sequence for $\F$.
\end{remark}

We are now in the position to define the main objects of our study.
\begin{definition}
Let $\psi:[0,+\infty)\to(0,+\infty)$ be a Borel function with $\inf\psi>0$.
For $\F:X\times\Ao(\Omega)\to[0,+\infty]$, satisfying property (Ad2) of Definition \ref{def:Admissible}, we define the $\F$-relative energy
$\mathcal{G}^{\F}: X \times \mathcal{M}^+(\Omega)\rightarrow [0,+\infty]$ as
	\begin{equation}
	\mathcal{G}^{\F} (\phi, \mu):= \left\{\begin{array}{ll}
	\displaystyle \int_{\Omega} \psi\left(u\right)\d \F^{\phi} \ &\ \text{if } \mu=u\F^{\phi}\,,\\
	\text{}\\
	+\infty  \ &\ \text{otherwise}.
	\end{array}
	\right.
	\end{equation}
\end{definition}

The main result of this paper concern the behaviour of sequences of $\F_{n}$-relative energies for a good approximating sequence $\{\F_n\}_n\in \mathrm{GA}(\F)$ of an admissible energy $\F$.
The interaction between the underlying functional $\F$ and the functional $\G^\F$ results in a lower limiting energy density, since $\psi^{cs}\leq\psi^c$.

\begin{theorem}[$\Gamma$-convergence for $\F\in \Cllc$]\label{thm:mainthm1}
Let $\F\in \Cllc$, and $\{\mathcal{F}_n\}_{n\in\N}\in\ga(\F)$.
Then $\mathcal{G}^{\F_n}$ $\Gamma$-converges to $\mathcal{G}^\F_{\mathrm{lsc}}$  with respect to the $L^1\times w^*$ topology, where the functional
\[
\mathcal{G}^\F_{\mathrm{lsc}}:X\times \mathcal{M}^+(\Omega)\rightarrow [0,+\infty]
\]
is defined as
\begin{equation}\label{eqn:sharpenergy1}
	\mathcal{G}^\F_{\mathrm{lsc}} (\phi, \mu):=
		\int_{\Omega} \psi^{cs}\left(\frac{\d \mu}{\d \F^{\phi}} \right)\d \F^{\phi}
			+ \Theta^{cs} \mu^{\perp}(\Omega).
\end{equation}
Here
$\mu=\frac{\d \mu}{\d \F^{\phi}}\F^{\phi}+\mu^{\perp}$ is the Radon-Nicodym decomposition of $\mu$ with respect to $\F^\phi$.
\end{theorem}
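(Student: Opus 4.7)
The plan is to prove the liminf and limsup inequalities separately; the limsup is where most of the technical work lies and proceeds through a hierarchy of reductions.

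For the liminf I would start from a sequence $(\phi_{n},\mu_{n})\to(\phi,\mu)$ in $L^{1}\times w^{*}$ with uniformly bounded energy. Since $\inf\psi>0$ this forces $\sup_{n}\F_{n}^{\phi_{n}}(\Omega)<+\infty$, so a subsequence of $\nu_{n}:=\F_{n}^{\phi_{n}}$ converges weakly$^{*}$ to some $\nu\in\mathcal{M}^{+}(\Omega)$, and property (GA1) together with the outer regularity of Radon measures gives $\F^{\phi}\le\nu$ as Borel measures. Writing $u_{n}:=\d\mu_{n}/\d\nu_{n}$ and using $\psi\ge\psi^{cs}$, the classical Buttazzo--Freddi joint lower semicontinuity for the convex integrand $\psi^{cs}$ (with recession $t\mapsto\Theta^{cs}t$) yields
\[
J(\mu,\nu):=\int_{\Omega}\psi^{cs}\!\left(\tfrac{\d\mu}{\d\nu}\right)\d\nu+\Theta^{cs}\mu^{\perp}_{\nu}(\Omega)\ \le\ \liminf_{n\to+\infty}\int_{\Omega}\psi(u_{n})\,\d\nu_{n}.
\]
It then remains to establish the pointwise monotonicity $J(\mu,\nu)\ge\mathcal{G}^{\F}_{\mathrm{lsc}}(\phi,\mu)$ whenever $\nu\ge\F^{\phi}$. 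Writing $\F^{\phi}=h\nu$ with $h\in[0,1]$ and using the affine representation $\psi^{cs}(t)=\sup_{i}(a_{i}t+b_{i})$, $b_{i}\ge 0$, $\Theta^{cs}=\sup_{i}a_{i}$ from Lemma \ref{lem:GeneralFactsAboutConvexSubEnvelope}, one immediately obtains the pointwise bounds $\psi^{cs}(u)\ge\Theta^{cs}u$ and $h\psi^{cs}(u/h)\le\psi^{cs}(u)$ for $h\in(0,1]$; splitting the integrations over $\{h>0\}$ and $\{h=0\}$ yields the required monotonicity.

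For the limsup, by Remark \ref{rem:Gamma} it suffices to produce, for each $\delta>0$, a sequence with energy at most $\mathcal{G}^{\F}_{\mathrm{lsc}}(\phi,\mu)+\delta$. I would build the recovery sequence via a hierarchy of approximations. First, Proposition \ref{thm:density} approximates the singular part $\mu^{\perp}$ by a finite sum of Dirac deltas at $\F^{\phi}$-regular points, while Proposition \ref{propo:fullRCVRYabsoluteCouple} reduces the absolutely continuous density to a piecewise constant function on a partition $\Omega=\bigsqcup\Omega_{j}$ with $\F^{\phi}(\partial\Omega_{j})=0$. Next, Proposition \ref{propo:fromConvToSubConv} replaces the target integrand $\psi^{cs}$ by $\psi^{c}$, which is allowed because $(\psi^{c})^{cs}=\psi^{cs}$ (Lemma \ref{lem:conv}): indeed Lemma \ref{lem:finalConv} gives $\psi^{cs}=\psi^{c}$ on $[0,t_{0})$ and $\psi^{cs}(t)=\psi^{c}(t_{0})\,t/t_{0}$ for $t\ge t_{0}$, so on any $\Omega_{j}$ where the density $t_{j}$ exceeds $t_{0}$ one pins the density to $t_{0}$ on the \emph{inflated} measure $(t_{j}/t_{0})\F^{\phi}$, and the purely lower semicontinuous property (Ad4) is precisely what produces a sequence $\phi_{n}\to\phi$ realising this inflation. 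For the Dirac atoms one instead invokes (Ad3) to create a small amount of $\F^{\bar\phi}$-mass in a small ball around $x_{i}$, calibrated so that placing density $t_{0}$ there gives mass $m_{i}$ and $\psi^{c}$-energy $\Theta^{cs}m_{i}$. Finally, Proposition \ref{propo:BorToConv} realises the $\psi^{c}$-energy by a sequence with $\psi$-energy: combining Lemma \ref{lem:convenvel}, $\psi^{c}(t)=\inf\{\lambda\psi(s_{1})+(1-\lambda)\psi(s_{2}):\lambda s_{1}+(1-\lambda)s_{2}=t\}$, with the Crumble Lemma \ref{lem:Crumble}, which disintegrates $\Omega$ into sub-domains carrying prescribed fractions of $\F^{\phi}$-mass on whose boundaries $\F^{\phi}$ vanishes, one splits each $\Omega_{j}$ into two subsets of relative $\F^{\phi}$-measures $\lambda,1-\lambda$ carrying densities $s_{1},s_{2}$; this preserves the total mass and brings $\psi(t_{j})$ down to $\psi^{c}(t_{j})+O(\varepsilon)$.

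The hard part will be Proposition \ref{propo:BorToConv}. The main technical difficulty is that $\F^{\phi}$ is a general Radon measure with no regularity with respect to Lebesgue, so the Crumble Lemma must be proved directly in this generality and the partition must be chosen so that $\F^{\phi}$ does not charge the boundaries; otherwise the weak$^{*}$ convergence supplied by (GA3) would not translate into convergence on each piece. A further compatibility issue, pervasive throughout the limsup, is that the local modifications of $\phi$ furnished by (Ad3) and (Ad4) to realise the different parts of $\mu$ must be supported on pairwise disjoint open regions, so that they can be summed into a single sequence $\phi_{n}\to\phi$ in $L^{1}$ without mutual interference; the diagonal argument of Remark \ref{rem:Gamma} is then used to absorb the error $\delta$.
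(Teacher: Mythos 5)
Your limsup plan is, in outline, exactly the paper's proof: the same chain of reductions (Proposition \ref{thm:density} for the singular part, Proposition \ref{propo:fullRCVRYabsoluteCouple} for piecewise--constant densities, Proposition \ref{propo:fromConvToSubConv} passing from $\psi^{cs}$ to $\psi^{c}$ via the (Ad4) inflation and $(\psi^{c})^{cs}=\psi^{cs}$, Proposition \ref{propo:BorToConv} passing from $\psi^{c}$ to $\psi$ via Lemma \ref{lem:convenvel} and the Crumble Lemma \ref{lem:Crumble}), assembled through Remark \ref{rem:Gamma}. Your liminf is a genuine variant: the paper never extracts a weak* limit of $\F_n^{\phi_n}$; it tests with finite families of disjoint open sets and cut-offs, uses the affine representation of $\psi^{cs}$ together with (GA1) term by term, and concludes with the supremum-of-measures lemma \cite[Lemma 2.35]{AFP}. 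Your route (extract $\nu$ with $\F_n^{\phi_n}\rightharpoonup^*\nu$, apply the Buttazzo--Freddi/\cite[Theorem 2.34]{AFP} joint lower semicontinuity of $(\mu,\nu)\mapsto\int_\Omega\psi^{cs}(\frac{\d\mu}{\d\nu})\d\nu+\Theta^{cs}\mu^{\perp}_{\nu}(\Omega)$, then compare with $\F^{\phi}\leq\nu$) is correct: the pointwise bounds $h\,\psi^{cs}(u/h)\leq\psi^{cs}(u)$ for $h\in(0,1]$ and $\psi^{cs}(u)\geq\Theta^{cs}u$ do follow from the representation of Lemma \ref{lem:GeneralFactsAboutConvexSubEnvelope} with $b_i\geq0$, and $\F^{\phi}\leq\nu$ follows from (GA1) together with inner regularity of $\F^{\phi}$ and the portmanteau bound on compact sets (outer regularity alone is not enough). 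What your version buys is a reduction to a quotable theorem; what it costs is the extraction of $\nu$ and the monotonicity step, which the paper's direct argument avoids.

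The one step that fails as written is your treatment of the Dirac atoms. Property (Ad3) only produces an arbitrarily \emph{small} and uncontrolled amount of $\F^{\overline{\phi}}$-mass near $x_i$ (it guarantees $0<\F^{\overline{\phi}}(U)<\e$, nothing more), so you cannot ``calibrate'' it and pin the density at $t_0$ to obtain mass $m_i$: with density $t_0$ that construction carries mass at most $t_0\e$, not $m_i$. The two consistent repairs are: (a) let the density diverge on the small ball, paying asymptotically $\lim_{t\to\infty}\zeta(t)/t$ per unit of $\mu$-mass --- but then this must be done while the integrand is still $\psi^{cs}$ (this is precisely Proposition \ref{thm:density} applied with $\zeta=\psi^{cs}$, where $\Theta^{cs}<+\infty$), because at the $\psi^{c}$ stage the slope at infinity $\lim_{t\to\infty}\psi^{c}(t)/t$ can strictly exceed $\Theta^{cs}$ and is $+\infty$ for the motivating example $\psi(t)=1+t^{2}/2$, so a diverging density there overpays or even gives infinite energy; or (b) keep the density equal to $t_0$ but create the prescribed amount $m_i/t_0$ of energy-mass near $x_i$, which requires the (Ad4) inflation with a large constant multiplier on a small ball, not (Ad3). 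Either fix restores the paper's accounting (and (a) is what the paper does, absorbing the atoms into an absolutely continuous couple \emph{before} the $\psi^{cs}\to\psi^{c}\to\psi$ stages); as stated, however, your atom construction either under-delivers the mass or over-pays the energy.
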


\begin{remark}
Note that $\Theta^{cs}<+\infty$, since $\psi^{cs}$ is at most linear at infinity (see Lemma \ref{lem:finalConv}).
\end{remark}

In particular, combining the above theorem with Remark \ref{rmk:nonatomicimpliesGA}, allow to identify, for certain energies in $\Clc$, the relaxation of the $\F$-relative energy $\G^{\F}$ in the $L^1\times w^*$ topology.

\begin{corollary}\label{cor:relcllc}
Let $\F\in \Cllc$ be non atomic. Then the relaxation of $\mathcal{G}^\F$ with respect to the $L^1\times w^*$ topology is $\mathcal{G}^\F_{\mathrm{lsc}}$.
\end{corollary}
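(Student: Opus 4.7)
The plan is to reduce Corollary \ref{cor:relcllc} to a direct application of Theorem \ref{thm:mainthm1} with the constant sequence $\F_n := \F$. The non-atomicity hypothesis is precisely what is needed to ensure that this constant choice belongs to $\ga(\F)$, as already noted in Remark \ref{rmk:nonatomicimpliesGA}. Indeed, property (GA1) is just (Ad1), (GA2) is (Ad2), and (GA3) is satisfied by the trivial recovery $\phi_n := \phi$: for this choice $\F^{\phi_n}_n = \F^\phi$ is non atomic by assumption, $\F^{\phi_n}_n \wt \F^\phi$ and $\F^{\phi_n}_n(\Omega) = \F^\phi(\Omega)$ hold automatically. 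A one-line verification of this remark is therefore the only preliminary step.

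Once the membership $\{\F_n\}_{n\in\N}\in\ga(\F)$ is established, Theorem \ref{thm:mainthm1} yields that the constant sequence of functionals $\mathcal{G}^{\F_n}\equiv \mathcal{G}^\F$ $\Gamma$-converges, with respect to the $L^1\times w^*$ topology, to the functional $\mathcal{G}^\F_{\mathrm{lsc}}$ defined in \eqref{eqn:sharpenergy1}. The conclusion then follows from the classical identification of the $\Gamma$-limit of a constant sequence with the (sequential) lower semicontinuous envelope of that functional: the $\liminf$ inequality of the $\Gamma$-limit forces $\mathcal{G}^\F_{\mathrm{lsc}}$ to be $\leq$ any sequentially lower semicontinuous minorant of $\mathcal{G}^\F$, while the $\limsup$ inequality provides, for every admissible $(\phi,\mu)$, a sequence converging to $(\phi,\mu)$ with energy converging from above to $\mathcal{G}^\F_{\mathrm{lsc}}(\phi,\mu)$, showing the reverse bound. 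This is legitimate in the present setting because the relevant topology is metric on bounded-mass subsets, via the $L^1$ metric on $L^1(\Omega)$ paired with the distance $d_{\M}$ of Lemma \ref{lem:metric}, and the paper's definition of $\Gamma$-convergence is exactly the sequential one.

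The main obstacle, which is no obstacle at all, is simply verifying that the constant sequence meets the hypotheses of Theorem \ref{thm:mainthm1}; all the substance of the argument, including the passage from $\psi$ to $\psi^{cs}$, has already been performed in the proof of that theorem. The corollary is therefore in essence a bookkeeping statement: for an admissible non-atomic energy the trivially self-approximating sequence returns, through the general $\Gamma$-convergence theorem, the relaxation of $\mathcal{G}^\F$, which is thus identified as $\mathcal{G}^\F_{\mathrm{lsc}}$.
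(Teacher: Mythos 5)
Your proposal is correct and follows exactly the route the paper intends: Remark \ref{rmk:nonatomicimpliesGA} shows the constant sequence $\F_n:=\F$ lies in $\ga(\F)$ (using non-atomicity only for (GA3)), and then Theorem \ref{thm:mainthm1} applied to this constant sequence, together with the standard identification of the $\Gamma$-limit of a constant sequence with the sequential lower semicontinuous envelope in the metric induced by $L^1$ and $d_{\M}$, gives the corollary. No gaps.
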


\begin{remark}
From the properties of $\psi^c$ and $\psi^{cs}$ and using Remark \ref{rem:cmpt} it is possible to deduce the following compactness property. Assume that a good approximating sequence $\{\F_n\}_{n\in\N}$ enjoys this compactness property:
if $\{\phi_n\}_{n\in\N}\subset L^1(\Omega)$ is such that
\[
\sup_{n\in\N} \F_n(\phi_n)<\infty,
\]
then $\phi_n\to\phi$ in $L^1(\Omega)$ for some $\phi\in X$.
Then, for every $\{\phi_n\}_{n\in\N}\subset L^1(\Omega)$ and $\{\mu_n\}_{n\in\N}\subset \mathcal{M}^+(\Omega)$ such that
\[
\sup_{n\in\N} \mathcal{G}_{lsc}^{\F_n}(\phi_n,\mu_n)<+\infty
\]
it is possible to extract a subsequence (not relabeled) in such a way that $\phi_n\to\phi$ in $L^1$ and $\mu_n\wt\mu$, for some $\phi\in X$ and $\mu\in \mathcal{M}^+(\Omega)$.
\end{remark}


\section{Proof of main theorems}\label{sec:proofs}

Hereafter, $\psi:[0,+\infty)\to(0,+\infty)$ will be a Borel function with $\inf\psi>0$.
We will denote by $\d$ a metric on the space $L^1(\Omega)\times\mathcal{M}^+(\Omega)$ which induces the $L^1\times w^*$ topology (see Lemma \ref{lem:metric}).

\subsection{Liminf inequality}
The proof of the liminf inequality for the $\Gamma$-convergence result of Theorem \ref{thm:mainthm1} follows from the argument in Proposition \cite[Lemma 2.34]{AFP}.
For the reader's convenience, we report it here.

\begin{proposition}\label{propo:lowerbound}
Let $(\phi,\mu)\in L^1(\Omega)\times\mathcal{M}^+(\Omega)$, and $\{(\phi_n,\mu_n)\}_{n\in\N}\subset L^1(\Omega)\times\mathcal{M}^+(\Omega)$ be such that
$(\phi_n,\mu_n)\rightarrow (\phi,\mu)$.
Fix $\F\in \Ad$ and consider a good approximating sequence $\{\F_n\}_{n\in\N}$ for $\F$.
Then
	\begin{align}\label{eqn:liminfcllc}
	\liminf_{n\rightarrow +\infty} \mathcal{G}^{\F_n}(\phi_n,\mu_n)&\geq \int_{\Omega} \psi^{cs}\left(\frac{\d \mu}{\d \F^{\phi}}\right) \d \F^{\phi}+\Theta^{cs} \mu^{\perp}(\Omega) 
\end{align}
\end{proposition}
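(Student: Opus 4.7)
The plan is to adapt the classical lower-semicontinuity argument for functionals defined on pairs of measures (in the spirit of \cite[Lemma 2.34]{AFP} and Buttazzo-Freddi \cite{ButtFredd}) to the present setting, where the underlying energies $\F_n^{\phi_n}$ are only weakly coupled to the putative limit $\F^\phi$ via (GA1) together with $\mu_n \wt \mu$ and $\phi_n\to\phi$ in $L^1$. First I would reduce to the non-trivial case $\liminf_n \mathcal{G}^{\F_n}(\phi_n,\mu_n)<+\infty$ and, up to extracting a subsequence, assume the liminf is in fact a limit; the finiteness of $\mathcal{G}^{\F_n}$ then forces $\mu_n=u_n\F_n^{\phi_n}$ for some $u_n\in L^1(\Omega,\F_n^{\phi_n})$. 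Since $\psi^{cs}\leq \psi$ pointwise, it suffices to prove the desired bound with $\int_\Omega \psi(u_n)\d\F_n^{\phi_n}$ replaced by $\int_\Omega \psi^{cs}(u_n)\d\F_n^{\phi_n}$.

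Next I would invoke Lemma \ref{lem:GeneralFactsAboutConvexSubEnvelope} to write $\psi^{cs}(t)=\sup_{i\in\N}(a_i t+b_i)$ with $b_i\geq 0$ and $\sup_i a_i=\Theta^{cs}$. For any finite family $\{A_j\}_{j=1}^k$ of pairwise disjoint open subsets of $\Omega$ satisfying $\mu(\partial A_j)=0$, and for any choice of indices $i_1,\dots,i_k$, the elementary affine minorization yields
\[
\int_\Omega \psi^{cs}(u_n)\d\F_n^{\phi_n}\geq \sum_{j=1}^k\left[a_{i_j}\mu_n(A_j)+b_{i_j}\F_n^{\phi_n}(A_j)\right].
\]
Taking the liminf as $n\to+\infty$, the continuity-set condition $\mu(\partial A_j)=0$ combined with $\mu_n\wt\mu$ gives $\mu_n(A_j)\to \mu(A_j)$ regardless of the sign of $a_{i_j}$, while (GA1) together with $b_{i_j}\geq 0$ gives $\liminf_n b_{i_j}\F_n^{\phi_n}(A_j)\geq b_{i_j}\F^\phi(A_j)$. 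Hence
\[
\liminf_{n\to+\infty}\mathcal{G}^{\F_n}(\phi_n,\mu_n)\geq \sum_{j=1}^k\left[a_{i_j}\mu(A_j)+b_{i_j}\F^\phi(A_j)\right].
\]

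The remaining step, which is the delicate one, consists in taking the supremum of the right-hand side over all admissible partitions and indices, and identifying the result with $\int_\Omega \psi^{cs}(d\mu/d\F^\phi)\d\F^\phi + \Theta^{cs}\mu^\perp(\Omega)$. For fixed partition, optimization pointwise in $i_j$ produces $\F^\phi(A_j)\,\psi^{cs}\bigl(\mu(A_j)/\F^\phi(A_j)\bigr)$ on cells with $\F^\phi(A_j)>0$ (directly from the affine representation), and $\Theta^{cs}\mu(A_j)$ on cells where $\F^\phi(A_j)=0$. Refining the partition so that the Radon-Nikodym density $u=d\mu/d\F^\phi$ is nearly constant on cells of the first type, and so that cells of the second type concentrate around the singular support of $\mu$, reconstructs $\int_\Omega\psi^{cs}(u)\d\F^\phi$ and $\Theta^{cs}\mu^\perp(\Omega)$ respectively in the limit. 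The main obstacle I anticipate is the measure-theoretic construction of such partitions---simultaneously continuity sets for $\mu$, adapted to the Lebesgue decomposition of $\mu$ with respect to $\F^\phi$, and shrinking to zero diameter---but this is standard and is carried out explicitly in \cite[Lemma 2.34]{AFP}, from which the conclusion follows.
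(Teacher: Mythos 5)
Your reduction (finiteness, $\mu_n=u_n\F_n^{\phi_n}$, $\psi\geq\psi^{cs}$), the affine minorization $\psi^{cs}(t)=\sup_i(a_it+b_i)$, and the passage to the limit on each cell (weak* convergence plus $\mu(\partial A_j)=0$ for the $\mu_n$-term, (GA1) together with $b_i\geq 0$ for the $\F_n^{\phi_n}$-term) are correct and run parallel to the paper. The genuine gap is in the last step, the identification of the supremum of $\sum_j\bigl[a_{i_j}\mu(A_j)+b_{i_j}\F^\phi(A_j)\bigr]$ with $\int_\Omega\psi^{cs}(u)\,\d\F^\phi+\Theta^{cs}\mu^\perp(\Omega)$. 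First, your mechanism for the singular part fails as stated: cells with $\F^\phi(A_j)=0$ ``concentrating around the singular support of $\mu$'' need not exist, because $\mu^\perp$ is only concentrated on an $\F^\phi$-null \emph{Borel} set, which typically lies inside the topological support of $\F^\phi$ (a Dirac mass at a point of $\spt\F^\phi$, the situation relevant to the applications of the paper); then every open cell meeting $\spt\mu^\perp$ has $\F^\phi(A_j)>0$, and on such a cell a single affine index cannot simultaneously produce the coefficient $\psi^{cs}(u)$ for the absolutely continuous part and the slope $\Theta^{cs}$ for the singular part: if $u\approx c$ with the subgradient of $\psi^{cs}$ at $c$ strictly below $\Theta^{cs}$, you lose a fixed fraction of $\Theta^{cs}\mu^\perp(A_j)$. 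The correct fix is to capture $\mu^\perp$ on open sets where $\F^\phi$ is merely \emph{small}, use slopes $a_i$ close to $\Theta^{cs}$ there, and control the sacrificed absolutely continuous contribution via absolute continuity of $u\F^\phi$. Second, the ``refinement so that $u$ is nearly constant'' is itself nontrivial (note that the per-cell optimum $\F^\phi(A_j)\,\psi^{cs}\bigl(\mu(A_j)/\F^\phi(A_j)\bigr)$ sits \emph{below} $\int_{A_j}\psi^{cs}(u)\,\d\F^\phi$ by Jensen): level sets of $u$ are only Borel, must be replaced by pairwise disjoint open sets by inner-compact/outer-open approximation, and the errors must be tracked also for negative slopes $a_i$. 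This is a Goffman--Serrin-type construction; it is \emph{not} ``carried out explicitly in \cite[Lemma 2.34]{AFP}'', whose proof (reproduced in the paper) never builds partitions adapted to the Lebesgue decomposition.

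What you dropped is exactly the device that makes this last step immediate in the paper: the cutoffs $v_j\in C_c^\infty(A_j)$, $0\leq v_j\leq 1$. They do double duty: they pass to the limit $\int_{A_j}v_ja_j\,\d\mu_n\to\int_{A_j}v_ja_j\,\d\mu$ without any continuity-set condition, and, crucially, taking the supremum over $v_j$ replaces $\psi_j:=a_ju+b_j$ (respectively $a_j$ on the set carrying $\mu^\perp$) by its positive part $\psi_j^+$; then the sup-of-measures result \cite[Lemma 2.35]{AFP}, applied with $\lambda=\F^\phi+\mu^\perp$, identifies the supremum over arbitrary finite disjoint open families as $\int_\Omega\sup_j\psi_j^+\,\d\lambda=\int_\Omega\psi^{cs}(u)\,\d\F^\phi+\Theta^{cs}\mu^\perp(\Omega)$, with no partition refinement and no separate treatment of the singular set. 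Your intermediate inequality can be completed along your lines, but as written the conclusion rests on a step that fails ($\F^\phi$-null cells) and on a reference that does not contain the claimed construction; reinstating the cutoffs and invoking \cite[Lemma 2.35]{AFP} is the short way to close the argument.
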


\begin{proof}
Assume, without loss of generality that
	\[
	\sup_{n\in \N} \{\mathcal{G}^{\F_n} (\phi_n,\mu_n)\}<+\infty\,.
	\]
Therefore $\mu_n=g_n \F_n^{\phi_n}$ for all $n\in\N$.
Since $\inf\psi>0$, using Remark \ref{rem:cmpt} we have that $\phi\in X$.
Note that $\psi\geq \psi^{cs}$, and that, by Lemma \ref{lem:GeneralFactsAboutConvexSubEnvelope} there exists two sequence $\{a_i\}_{i\in \N}\subset \R$, $\{b_i\}_{i\in \N}\subset [0,+\infty)$, such that
\begin{align}\label{eq:char2}
	\psi^{cs}(x)&=\sup_{i\in \N}\{a_i x +b_i \ | \ b_i>0\}, \  &\Theta^{cs}=\sup_{i\in \N}\{a_i\}. 
	\end{align}
Let $\{A_j\}_{j=1}^M$ be a family of pair-wise disjoint of open subset of $\Omega$ and, for each $j\in\N$, let $v_j\in C^{\infty}_c(A_j)$, with $v_j\in [0,1]$.
We have that
	\begin{align*}
		\int_{A_j} \psi(g_n)\d\F_n^{\phi_n}&\geq \int_{A_j} \psi^{cs}\left(g_n\right)\d\F_n^{\phi_n}\geq  \int_{A_j} \left(v_j a_j g_n +b_j\right)\d\F_n^{\phi_n}\\
		&=  \int_{A_j}v_j a_j g_n\d\F_n^{\phi_n}+ b_j\F_n^{\phi_n}(A_j).
	\end{align*}
By summing over $j=1,\ldots,M$, taking the limit as $n\rightarrow +\infty$, exploiting the lower semicontinuity of $\F$, and using the fact that $b_j>0$ together with $\mu_n\wt\mu$, we get
	\begin{align}\label{tt}
		\liminf_{n\rightarrow +\infty} \mathcal{G}^{\F_n} (\phi_n,\mu_n)
			&\geq   \sum_{j=1}^M\int_{A_j}v_j a_j \d \mu+  \sum_{j=1}^M b_j\int_{A_j} v_j \d \F^{\phi}
				\nonumber \\
		&=  \sum_{j=1}^M \int_{A_j}v_j \left(a_j \frac{\d \mu}{\d\F^{\phi}}(x)
			+b_j\right)\d \F^{\phi}+ \sum_{j=1}^M \int_{A_j}v_j a_j \d \mu^{\perp}\,.
		\end{align}
Taking the supremum in \eqref{tt} among all finite families $\{A_j\}_{j\in J}$ of pair-wise disjoint subsets of $\Omega$, and $v_j\in C_c^{\infty}(A_j)$ with $v_j\in[0,1]$, we get
	\begin{align*}
	\liminf_{n\rightarrow +\infty} \mathcal{G}^{\F_n} (\phi_n,\mu_n)\geq	
		\sup\left\{\, \sum_{j\in J} \int_{A_j} \psi^+_j(x) \d \lambda \,\,|\,\, \{A_j\}_{j\in J},\, A_j\subset\Omega \text{ pair-wise disjoint } \,\right\}\,.
	\end{align*}
Here $\psi^+_j(x):=\max\{\psi_j(x),0\}$,
	\begin{equation*}
	 \psi_j(x):=\left\{
	\begin{array}{ll}
	a_j \frac{\d \mu}{\d \F^{\phi}}(x)+b_j \ & \text{on } \Omega\setminus N\, \\
	a_j \ & \text{on }N\,,
	\end{array}\right.
	\end{equation*}
being $N$ a set where $\mu^{\perp}$ is concentrated and $\lambda=\F^{\phi}+\mu^{\perp}$.
Using \cite[Lemma 2.35]{AFP} we can infer
\[
\sup\left\{\, \sum_{j\in J} \int_{A_j} \psi^+_j(x) \d \lambda \,\,|\,\,
	\{A_j\}_{j\in J},\, A_j\subset\Omega \text{ pair-wise disjoint } \,\right\}
	=\int_{\Omega}\,\, \sup_{j\in \N} \{\psi_j(x)^+\} \d \lambda.
\]
Hence, using \eqref{eq:char2}, we get
	\[
	\int_{\Omega} \sup_{j\in \N} \{\psi_j(x)^+\} \d \lambda
		=\int_{\Omega} \psi^{cs}\left(\frac{\d \mu}{\d \F^{\phi}}\right)\d \F^{\phi}
			+\Theta^{cs} \mu^{\perp}(\Omega).
	\]
This concludes the proof.
\end{proof}


\subsection{Limsup inequality}

The proof of the limsup inequality uses several approximations that we prove separately.
We start by proving a density result that will allow us to construct the recovery sequence only for
\textit{absolutely continuous couples}, namely for pairs $(\phi, h\F^\phi)$ with $h\in L^1(\Omega;\F^\phi)$.

\begin{proposition}\label{thm:density}
Let $\F\in \Ad$, $\zeta:[0,+\infty)\to(0,+\infty)$ a convex function such that
\begin{equation}\label{eq:flinearatinfinity}
\Theta_{\zeta}:=\lim_{t\to+\infty}\frac{\zeta(t)}{t}<+\infty\,.
\end{equation}
Then for any $(\phi,\mu)\in X\times \mathcal{M}^+(\Omega)$, there exist $\{\varphi_n\}_{n\in\N}\subset X$, and $\{h_n \}_{n\in \N}$ with $h_n\in L^1(\Omega;\F^{\varphi_n})$ such that 
	\[
	\lim_{n\to+\infty} \d\left( (\varphi_n,h_n\F^{\varphi_n}),(\phi,\mu)\right)=0\,,
	\]
and
	\[
	\lim_{n\to+\infty}\int_{\Omega} \zeta(h_n)\d \F^{\varphi_n}=
		\int_{\Omega} \zeta\left(\frac{\d \mu}{\d \F^{\phi}}\right)\d \F^{\phi}
		+\Theta_{\zeta} \mu^{\perp}(\Omega)\,.
	\]
\end{proposition}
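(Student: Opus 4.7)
The plan is to build the approximation in two successive stages---first replacing the singular part $\mu^{\perp}$ by a finite sum of Dirac masses, then resolving each Dirac mass as a sharp localized bump of a measure absolutely continuous with respect to a perturbed energy $\F^{\varphi_{n}}$ produced by hypotheses (Ad3)--(Ad4). A diagonal argument based on Remark \ref{rem:Gamma} then assembles a single sequence.

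\emph{Stage 1: Dirac approximation.} Decompose $\mu = g\F^{\phi} + \mu^{\perp}$ with $g = \frac{\d\mu}{\d\F^{\phi}}$ and $\mu^{\perp}$ concentrated on an $\F^{\phi}$-null Borel set $N$. A standard dyadic-partition construction---refine $\Omega$ into Borel cells $\{E_{i}^{k}\}$ of vanishing diameter with $\F^{\phi}(\partial E_{i}^{k}) = 0$, and pick $x_{i}^{k} \in E_{i}^{k} \cap N$ avoiding the countably many atoms of $\F^{\phi}$---yields $\sigma_{k} := \sum_{i}\mu^{\perp}(E_{i}^{k})\delta_{x_{i}^{k}} \wt \mu^{\perp}$ with $\sigma_{k}(\Omega) = \mu^{\perp}(\Omega)$. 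By Remark \ref{rem:Gamma}, it suffices to approximate each pair $(\phi,\mu_{k})$, where $\mu_{k} := g\F^{\phi} + \sigma_{k}$, with energy tending to $\int_{\Omega}\zeta(g)\d\F^{\phi} + \Theta_{\zeta}\,\sigma_{k}(\Omega)$.

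\emph{Stage 2: resolving each Dirac.} Fix $k$ and $r > 0$ small enough that the balls $B_{i} := B_{r}(x_{i}^{k})$ are pairwise disjoint, contained in $\Omega$, and satisfy $\F^{\phi}(\partial B_{i}) = 0$ (an $\mathcal{L}^{1}$-a.e.\ condition on $r$). Split the indices according to whether $\F^{\phi}(B_{i}) = 0$: for such indices, shrink $B_{i}$ if necessary so it sits in an open set on which $\F^{\phi}$ vanishes and invoke (Ad3) to produce $\tilde\phi_{r} \in X$ with $\tilde\phi_{r} = \phi$ outside $\bigcup B_{i}$, $\|\tilde\phi_{r}-\phi\|_{L^{1}} \leq r$, and $0 < \F^{\tilde\phi_{r}}(B_{i}) =: \beta_{i}^{r} < r$; for the remaining indices set $\beta_{i}^{r} := \F^{\phi}(B_{i})$ directly. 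Apply (Ad4) to $\tilde\phi_{r}$ with the function
\[
f_{r,n} := 1 + \sum_{i}\Bigl(\tfrac{\gamma_{i}^{r,n}}{\beta_{i}^{r}} - 1\Bigr)^{+}\mathbbm{1}_{B_{i}},
\]
for a choice $\gamma_{i}^{r,n} \geq \beta_{i}^{r}$ tending to $0$ suitably, to obtain $\varphi_{r,n} \to \tilde\phi_{r}$ in $L^{1}$ with $\F^{\varphi_{r,n}} \wt f_{r,n}\F^{\tilde\phi_{r}}$, so that $\F^{\varphi_{r,n}}(B_{i}) \to \gamma_{i}^{r,n}$ as $n \to \infty$. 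Set
\[
h_{r,n} := g\,\mathbbm{1}_{\Omega \setminus \bigcup B_{i}} + \sum_{i}\frac{\alpha_{i}^{k}}{\gamma_{i}^{r,n}}\,\mathbbm{1}_{B_{i}}.
\]
The total mass of $h_{r,n}\F^{\varphi_{r,n}}$ on each $B_{i}$ tends to $\alpha_{i}^{k}$, and the measure concentrates at $x_{i}^{k}$ as $r \to 0$; outside the balls it converges weakly* to $g\F^{\phi}$. A double diagonalization yields a sequence $(\varphi_{m}, h_{m}\F^{\varphi_{m}})$ with $\varphi_{m} \to \phi$ in $L^{1}$ and $h_{m}\F^{\varphi_{m}} \wt \mu$.

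\emph{Energy and main obstacle.} On each $B_{i}$,
\[
\int_{B_{i}}\zeta(h_{r,n})\d\F^{\varphi_{r,n}} = \zeta\bigl(\alpha_{i}^{k}/\gamma_{i}^{r,n}\bigr)\,\F^{\varphi_{r,n}}(B_{i}),
\]
which converges to $\zeta(\alpha_{i}^{k}/\gamma_{i}^{r,n})\,\gamma_{i}^{r,n}$ as $n \to \infty$ and then to $\Theta_{\zeta}\,\alpha_{i}^{k}$ as $\gamma_{i}^{r,n} \to 0$, using $\zeta(t)/t \to \Theta_{\zeta}$. Outside the balls, weak* convergence of $\F^{\varphi_{r,n}}$ to $\F^{\phi}$ together with the at-most-linear growth of $\zeta$ yields $\int_{\Omega \setminus \bigcup B_{i}}\zeta(g)\d\F^{\varphi_{r,n}} \to \int_{\Omega}\zeta(g)\d\F^{\phi}$ as $r \to 0$. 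The central difficulty is the generality of $\F^{\phi}$: it may be atomic, it may concentrate positive mass in every neighborhood of $x_{i}^{k}$, and hypotheses (Ad3)--(Ad4) only provide $L^{1}$-closeness of $\phi$ together with weak* control of the measures. Consequently the scales $\beta_{i}^{r}$ and $\gamma_{i}^{r,n}$ must be tuned so that the $L^{1}$-errors from (Ad3) remain summable, each bump carries small total mass but high density (precisely the mechanism producing the factor $\Theta_{\zeta}$), and a single diagonal sequence can be extracted via Remark \ref{rem:Gamma}.
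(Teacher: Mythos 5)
There is a genuine gap, and it sits exactly where you depart from what the hypotheses can deliver: the invocation of (Ad4) in Stage 2. The sequence produced by (Ad4) only converges to $\tilde\phi_r$ in $L^1$ with $\F^{\varphi_{r,n}}\wt f_{r,n}\F^{\tilde\phi_r}$; it need not coincide with $\phi$ (or $\tilde\phi_r$) anywhere, so outside the balls you lose the identity $\F^{\varphi_{r,n}}=\F^{\phi}$ that the rest of your argument silently relies on. Your two claims ``outside the balls it converges weakly* to $g\F^{\phi}$'' and ``$\int_{\Omega\setminus\bigcup B_i}\zeta(g)\,\d\F^{\varphi_{r,n}}\to\int_\Omega\zeta(g)\,\d\F^{\phi}$'' do not follow from weak* convergence: $g=\frac{\d\mu}{\d\F^{\phi}}$ is merely an $L^1(\Omega;\F^{\phi})$ Borel density, defined only $\F^{\phi}$-a.e., and weak* convergence $\nu_n\wt\nu$ does not give $g\nu_n\wt g\nu$ nor $\int\zeta(g)\,\d\nu_n\to\int\zeta(g)\,\d\nu$ for a non-continuous integrand (e.g.\ $\nu_n=\delta_{1/n}\wt\delta_0$ and $g=\ca_{\{0\}}$); worse, $\F^{\varphi_{r,n}}$ may charge $\F^{\phi}$-null sets on which $g$ is not even defined. (There is also an index inconsistency: $f_{r,n}$ and the targets $\gamma_i^{r,n}$ depend on $n$, yet you let the (Ad4) sequence run in the same index $n$, so ``$\F^{\varphi_{r,n}}(B_i)\to\gamma_i^{r,n}$'' is not meaningful as stated; this is repairable by a further diagonalization, but the locality problem above is not.)

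The repair is to drop (Ad4) altogether --- it is both the source of the gap and unnecessary --- and this is what the paper does. Keep every modification of $\phi$ strictly supported in the small balls: at points $x_i$ in the support of $\F^{\phi}$, do not modify $\phi$ at all and instead use the Besicovitch differentiation theorem (having chosen $x_i$ with $\frac{\d\F^{\phi}}{\d\mu^{\perp}}(x_i)=0$, which is possible $\mu^{\perp}$-a.e.) to find radii with $\F^{\phi}(B_{r}(x_i))$ positive but as small as you like relative to the cell masses; at points outside the support, use (Ad3) exactly as you do, which by construction leaves $\phi$ untouched off the balls. Then Remark \ref{rem:locality} gives $\F^{\varphi_n}=\F^{\phi}$ on the complement of the balls, the density on each ball ($\mu^{\perp}(Q^n_j)/\F^{\varphi_n}(B_{r_n}(x^n_j))$, uniformly large) produces the factor $\Theta_{\zeta}$ via $\zeta(t)/t\to\Theta_{\zeta}$, and the absolutely continuous contribution is handled by dominated convergence since $\F^{\phi}(\bigcup_i B_{r_n}(x^n_i))\to0$; the weak* convergence $h_n\F^{\varphi_n}\wt\mu$ then follows as in your Stage 1 estimate. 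Your Stage 1 and the mechanism producing $\Theta_{\zeta}$ on each bump are fine; it is only the (Ad4) rescaling step that breaks the proof.
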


\begin{figure}[h!]
\centering
\subfloat[][\small{\emph{We are given $(\phi, \mu)\in X\times \mathcal{M}^{+}(\Omega)$ where $\mu=g\F^{\phi}+\mu^{\perp}$}}]
{\includegraphics[width=.52\columnwidth]{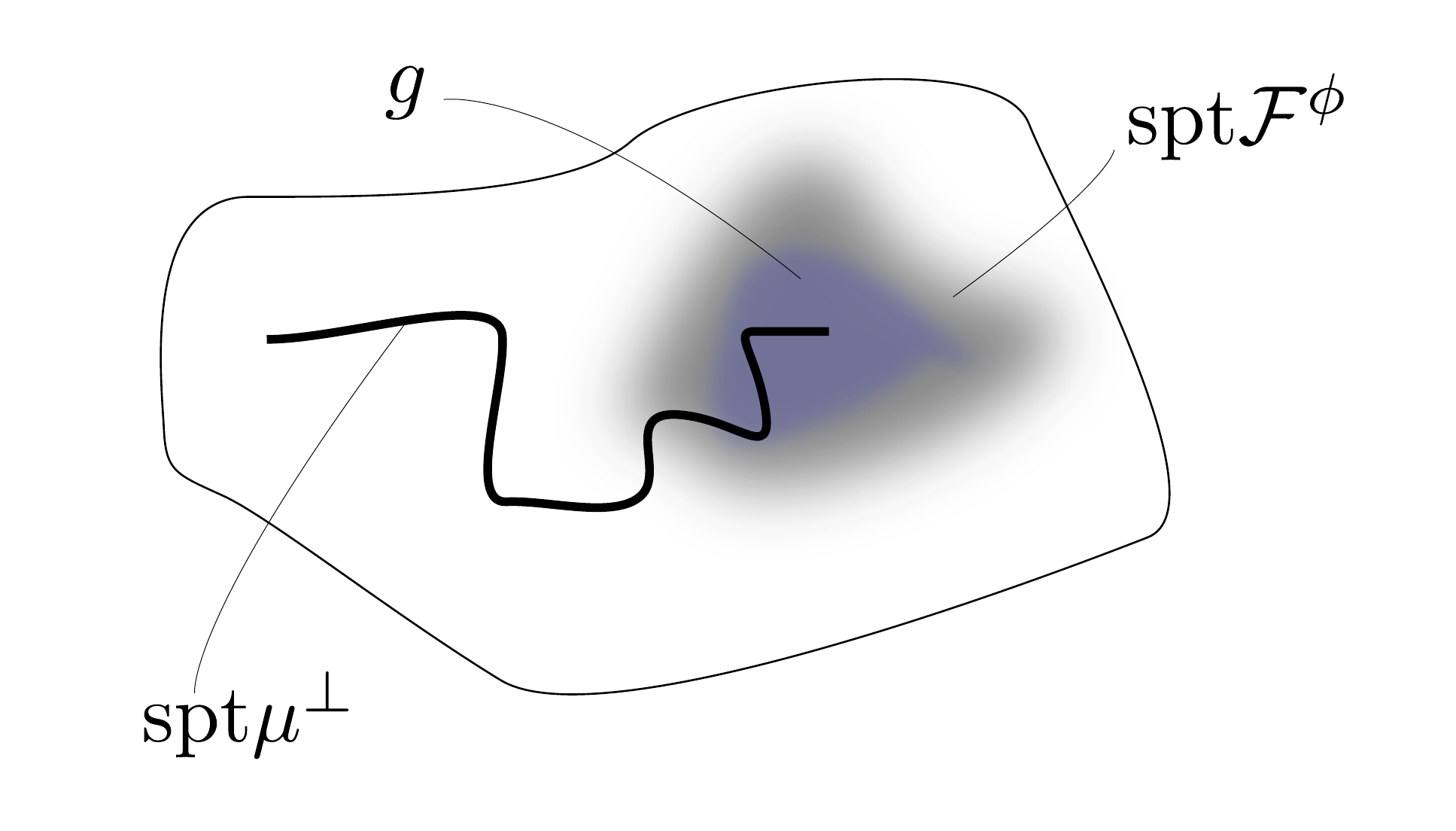}\label{approx0}}
\subfloat[][\small{\emph{We divide $\Omega$ in small cubes and we select only the cubes $Q_j^k$ such that $Q_j^k\cap \spt \mu^{\perp}\neq \emptyset$. We then select $x_j^k\in Q_j^k\cap \spt \mu^{\perp}$ which will provide an approximation of $\mu^{\perp}$ with Dirac deltas. We identify the points of type "a" (in red) as those points $x_j^k\in \spt \F^{\phi}$ and the points of type "b" (in green) as thos points $x_j^k\notin \spt \F^{\phi}$.  }}]
{\includegraphics[width=.52\columnwidth]{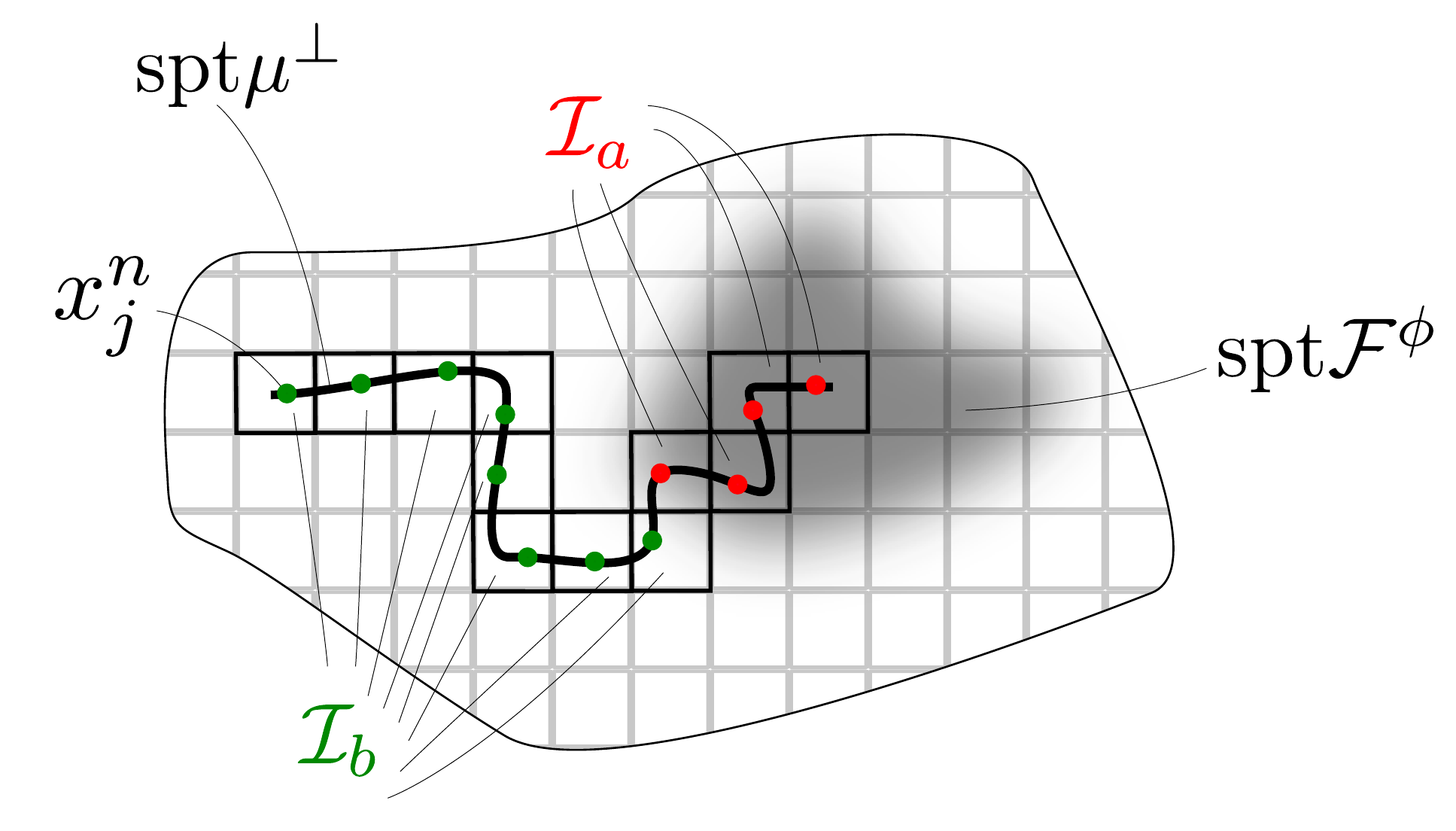}\label{approx1}} \\
\vspace{1cm}
\subfloat[][\small{\emph{We find suitably small balls $B_{r_k}(x_j^k)\cc Q_j^k$ around each point $x_j^k$ where we suitably modify the function $\phi$.
}}]
{\includegraphics[width=.52\columnwidth]{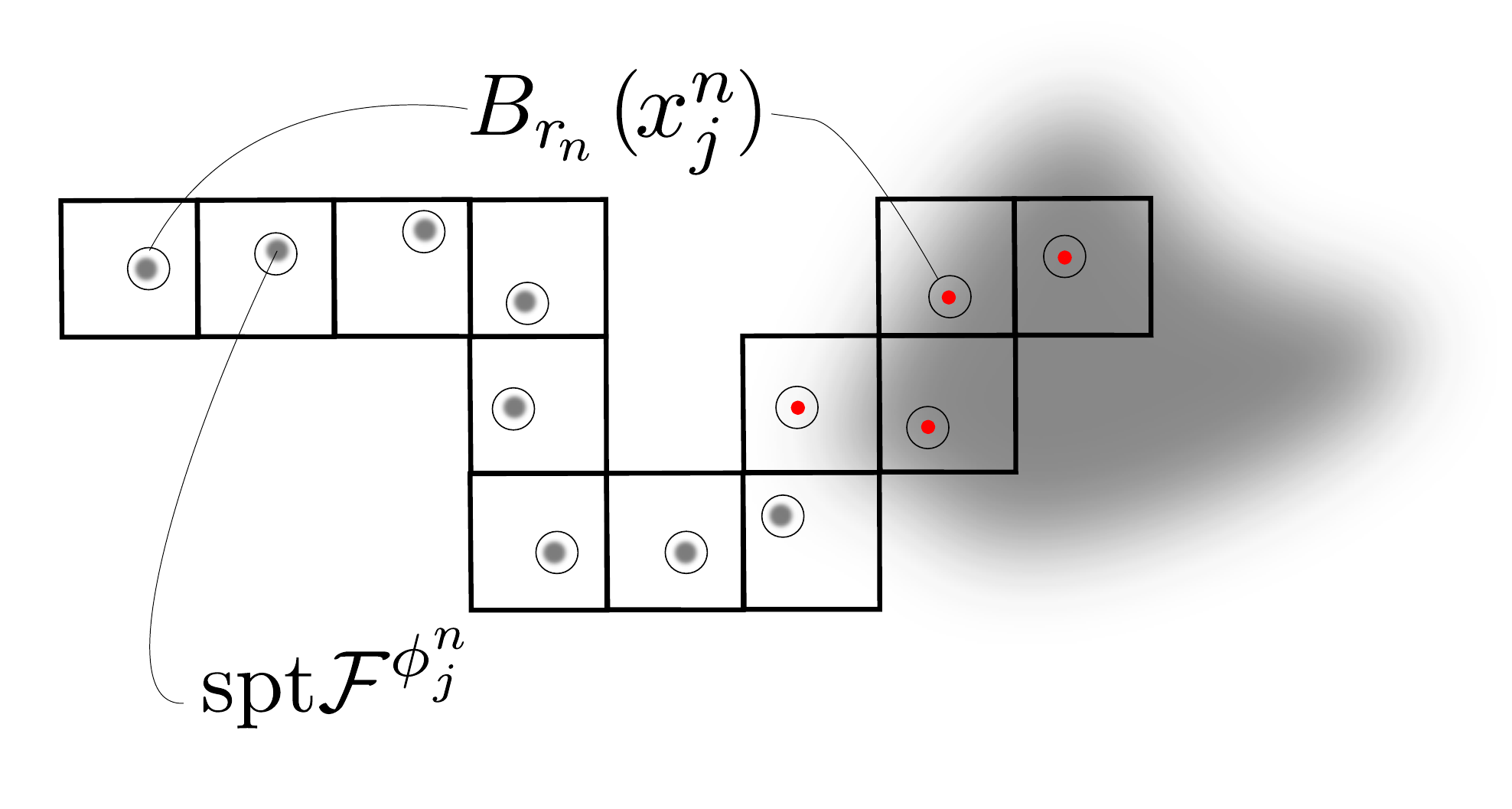}\label{approx2}}
\subfloat[][\small{\emph{We accordingly define functions $h_j^n$ as $\frac{\mu^{\perp}(Q_j^k)}{\F^{\phi}(B_{r_k}(x_j^k))}$  for points of type "a" and as $\frac{\mu^{\perp}(Q_j^k)}{\F^{\phi_j^k}(B_{r_k}(x_j^k))}$ for point of type "b".
}}]
{\includegraphics[width=.52\columnwidth]{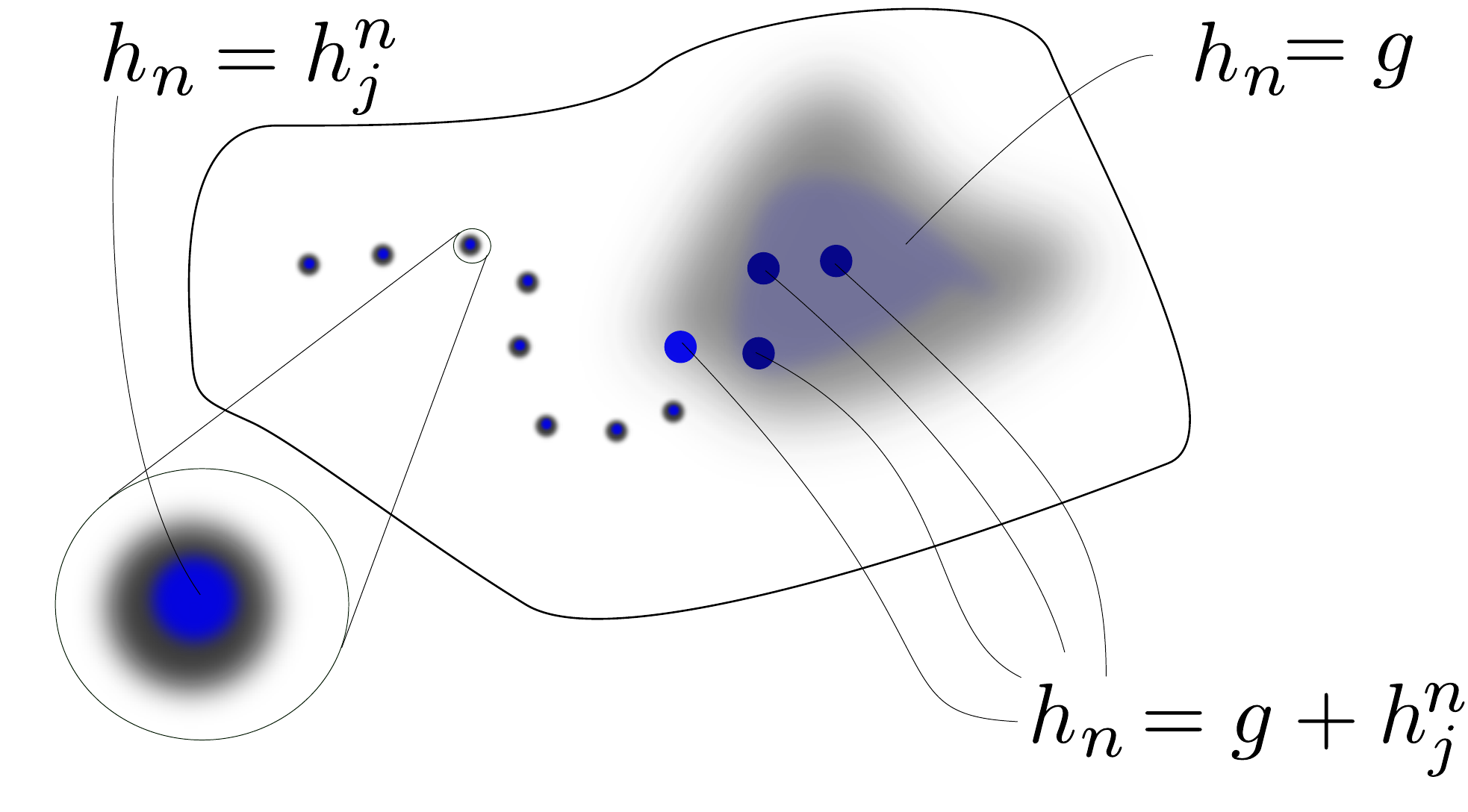}\label{approx3}}
\caption{Construction of the approximating sequence $(\varphi_n, h_n \F^{\varphi_n })$}
\end{figure}

\begin{proof}
Let $n\in\N$, and consider a grid of open cubes $\{Q_j^n\}_{j\in \N}$ of edge length $1/n$.
Without loss of generality, we can assume $\F^\phi(\partial Q_j^n\cap\Omega)=0$ for every $j\in\N$. Write $\mu=g\F^{\phi}+\mu^{\perp}$, where $g:=\frac{\d \mu}{\d \F^\phi}$. We divide the proof in four steps.\\

\emph{Step 1.} We claim that there exists a sequence $\{\lambda_n\}_{n\in \N}\subset \mathcal{M}^+ (\Omega)$ of the form
\begin{equation}\label{eq:lambdak}
\lambda_n:=\sum_{j=1}^{M_n} \mu^{\perp}(Q_j^n) \delta_{x^n_j}\,,
\end{equation}
with
 $x_j^n\in Q_j^n\cap \spt \mu^{\perp}$,  for all $j=1,\ldots,M_n$ such that
\begin{equation}\label{eq:approxdelta}
\lim_{n\to+\infty} [\, \d_{\mathcal{M}}(\mu^{\perp},\lambda_n) + \Theta_{\zeta} |\mu^{\perp}(\Omega)-\lambda_n(\Omega)| \,]= 0.
\end{equation}
Define
	\[
	\mathcal{J}^n_{\mu^{\perp}}:=\{ j\in \N \ \ | \ \  \spt\mu^{\perp}\cap Q_j^n\neq \emptyset\},
	\]
For every $j\in\mathcal{J}^n_{\mu^{\perp}}$, choose $x_j^n \in\spt\mu^{\perp}\cap  Q_j^n$ with
\[
\frac{\mathrm{d}\F^\phi}{\mathrm{d}\mu^\perp}(x_j^n)=0
\]
and define
	\[
	\lambda_n:=\sum_{j\in \mathcal{J}^n_{\mu^{\perp}}} \mu^{\perp}(Q_j^n) \delta_{x_j^n}.
	\]
Note that
\[
\sup_{n\in\N} \lambda_n(\Omega)<+\infty\,.
\]
Let $E\subset\subset\Omega$ be a bounded Borel set with $\mu^{\perp}(\partial E)=0$. Set
	\[
		 \mathcal{J}^n_{\mu^{\perp},\partial E}:=\{j\in\mathcal{J}^n_{\mu^{\perp}} \ | \
		 	Q_j^n\cap \partial E \neq \emptyset\}.
	\]
Fix $\eta>0$ and let $U\supset \partial E$ be an open set such that $\mu^{\perp}(U)<\eta$.
Take $n_0\in\N$ large enough so that $j\in \mathcal{J}^n_{\mu^{\perp},\partial E}$ implies $Q_j^n\cc U$ for all $n>n_0$.
Note that 
	\begin{align}\label{eq:nuk1}
	|\lambda_n(E)-\mu^{\perp}(E) | &\leq \sum_j | \mu^\perp(\overline{Q^n_j}) - \lambda_n( \overline{Q^n_j} ) |  \nonumber \\	
	&\leq \sum_{\mathcal{J}^n_{\mu^{\perp},\partial E}} \mu^{\perp}( \overline{Q_j^n} )
			+ \sum_{\mathcal{J}^n_{\mu^{\perp},\partial E}} \lambda_n( \overline{Q_j^n} )  \nonumber \\
	&\leq 2\mu^{\perp}(U) < 2\eta \,.
	\end{align}
Therefore
	\begin{equation}\label{eq:nuk2}
	\lim_{n\rightarrow +\infty} |\lambda_n(E)-\mu^{\perp}(E)|\leq 2\eta \,.
	\end{equation}
Since $\eta>0$ is arbitrary, from \eqref{eq:nuk1} and \eqref{eq:nuk2} we get
 	\[
	\lim_{n\rightarrow +\infty} |\lambda_n(E)-\mu^{\perp}(E)|=0.
	\]
This proves the claim.\\

\emph{Step 2.} Let $\{\lambda_n\}_{n\in \N}\subset \mathcal{M}^+(\Omega)$ be the sequence provided by Step 1. Then
	\[
	\lambda_n:=\sum_{j=1}^{M_n} \mu^{\perp}(Q_j^n) \delta_{x^n_j}
	\]
for $x^n_1,\ldots,x^n_{M_n}\in \spt \mu^{\perp}$.
Note that, for every $n\in\N$, the cubes $\{Q^n_j\}_{j=1}^{M_n}$ are pair-wise disjoint.
The idea is to locally deform the function $\phi$ around each point $x_j^n$ and define a corresponding density $h_j^n$ in a neighborhood of $x_j^n$.
Let
	\begin{align*}
	\mathcal{I}_a^n&:=\{j=1,\ldots, M_n\ \,|\, \ Q_j^n\cap \Omega\neq \emptyset, \  x_j^n\in \spt \F^{\phi}\}\,,\\
	\mathcal{I}_b^n&:=\{j=1,\ldots, M_n \ \,|\, \ Q_j^n\cap \Omega\neq \emptyset, \  x_j^n\notin \spt \F^{\phi}\}.
	\end{align*}
Fix $n\in\N$. We show how to recursively define $h^n_{j}$ and $\phi^n_{j}$. Set $\phi^n_0:=\phi$, and $h^n_0:=g$.
Assume $\phi^n_{j-1}$, and $h^n_{j-1}$ are given, and define $\phi^n_j$, and $h^n_j$ as follows.

\emph{Case one:} \textit{$j\in\mathcal{I}_a^n$}.
In this case, for any $r>0$ we have $\F^{\phi}(B_r(x^n_j))>0$.
Moreover using \cite[Theorem 2.22]{AFP}, we get
	\[
	\lim_{r\rightarrow 0} \frac{\mu^{\perp}(B_{r}(x_j^n))}{\F^{\phi}(B_{r}(x_j^n))}=+\infty.
	\]
In particular we can find $r_n<<1$ such that $B_{r_n}(x^n_j)\subset Q_j^n$ and
	\begin{equation}\label{eq:proprapprox1}
	\F^{\phi}(\partial B_{r_n}(x^n_j))=0,\quad \ \quad\quad \ 
	0<\F^{\phi}( B_{r_n}(x^n_j))<\frac{\mu^{\perp}( Q_j^n )}{M_n n} \\	 
	\end{equation}
for all $j\in \mathcal{I}_a^n$. Then we define $\phi^n_j:=\phi^n_{j-1}$, and
	\begin{equation*}
	h_j^n:=\frac{\mu^{\perp}(Q_j^n)}{\F^{\phi}(B_{r_n}(x^n_j))}\ca_{B_{r_n}(x^n_j)} 
		+ h^n_{j-1}\ca_{\Omega\setminus B_{r_n}(x^n_j)}\,.
	\end{equation*}

\emph{Case two:} \textit{$x_j\in\mathcal{I}_b^n$}.
Therefore, there exists $r_0>0$ such that for all $r_0>r>0$ we have
	\[
	\F^{\phi}(B_r(x_j^n))=0.
	\]
Fix $r_n<<1$ such that $B_{2r_n}(x_j^n)\subset Q_j^n$, and invoke property (Ad3) of Definition \ref{def:Admissible} with $A=B_{2r_n}(x_j^n)$, $U=B_{r_n}(x_j^n)$, $\e_n:= (M_n n)^{-1}  \min_{j=1,\ldots,M_n}\{\mu^{\perp}(Q_j^n)\} $ to find $ \phi_j^n\in X$ such that $\phi_j^n=\phi^n_{j-1}$ on $\Omega\setminus B_{r_n}(x_j)$, $\F^{\phi_j^n}(\partial B_{r_n}(x_j))=0$ and
	\begin{equation}\label{eq:proprapprox2}
\|\phi_j^n-\phi^n_{j-1}\|_{L^1}\leq \frac{  \mu^{\perp}(\Omega)}{M_n n}, \ \qquad \ 
0<\F^{\phi_j^n}(B_{r_n}(x^n_j) )<\frac{ \mu^{\perp}(Q_j^n)}{M_n n} 
	\end{equation}
for all $j\in \mathcal{I}_b^n$. Define
\begin{equation*}
	\begin{split}
	h_j^n&:=\frac{\mu^{\perp}(Q_j^n)}{\F^{\phi_j^n}(B_{r_n}(x_j))}\ca_{B_{r_n}(x^n_j)} 
		+ h^n_{j-1}\ca_{\Omega\setminus B_{r_n}(x^n_j)}\,.
	\end{split}
	\end{equation*}
Set $\varphi_n:=\phi^n_{M_n}$ and $h_n:=h^n_{M_n}$.
Note that $\varphi_n=\phi$  that $h_n=g$  outside $\bigcup_{j=1}^{M_n} B_{r_n}(x_j^n)$, that
\begin{equation}\label{eq:hklarge}
h_n\geq n M_n \quad\quad\quad \text{ on }\bigcup_{j=1}^{M_n} B_{r_n}(x_n)\,,
\end{equation}
and that, by construction, $\varphi_n\in X$.
Moreover $\F^{\varphi_n}(\partial B_{r_n}(x_j^n))=0$ and 
	\[
	\|\varphi_n-\phi\|_{L^1(\Omega)}\leq
		\sum_{j\in \mathcal{I}_b} \|\phi_j^n- \phi^n_{j-1}\|_{L^1}\leq \frac{\mu^{\perp}(\Omega)}{n}\,.
	\]
\emph{Step 3.} We claim that
\[
	\lim_{n\rightarrow+\infty} \int_{\Omega} \zeta(h_n)\d \F^{\varphi_n}
	 =\int_{\Omega} \zeta(g)\d\F^{\phi}+\Theta_{\zeta} \mu^{\perp}(\Omega).
	\]
Indeed, recalling Remark \ref{rem:locality}, we get
	\begin{equation}\label{Fixing1}
	\int_{\Omega} \zeta(h_n) \d\F^{\varphi_n}
		=\int_{\Omega\setminus \bigcup_{j=1}^{M_n} B_{r_n}(x_j^n)} \zeta(g)\d \F^{\phi}
			+\sum_{j=1}^{M_n} \frac{\zeta\left(h_n \right)}{h_n} \mu^{\perp}(Q_j^n)\,.
	\end{equation}
Fix $\delta_0>0$. Using \eqref{eq:hklarge} it is possible to take $n$ large enough so that
	\[
	\left|\frac{\zeta\left(h_n \right)}{h_n} -\Theta_\zeta\right|\leq \delta_0
	\]
for all $j=1,\ldots,M_n$. In particular
	\begin{align*}
	\left|\sum_{j=1}^{M_n} \frac{\zeta\left(h_n \right)}{h_n} \mu^{\perp}(Q_j^n)
		-\Theta_\zeta \lambda_n(\Omega)\right|&\leq\left|
			\sum_{j=1}^{M_n}  \left(\frac{\zeta\left(h_n \right)}{h_j^n} 
				- \Theta_\zeta\right)\mu^{\perp}(Q_j^n)\right|\\
	&\leq  \sum_{j=1}^{M_n}  \delta_0 \mu^{\perp}(Q_j^n)=\delta_0\lambda_n(\Omega).
	\end{align*}
The arbitrariness of $\delta_0>0$, together with \eqref{eq:approxdelta}, yield
	\begin{equation}\label{Fixing2}
	\lim_{n\rightarrow +\infty}\sum_{j=1}^{M_n} \frac{\zeta\left(h_n \right)}{h_n} \mu^{\perp}(Q_j^n)
		=\Theta_\zeta \mu^{\perp}(\Omega)\,.
	\end{equation}
Now, set
\[
A_n:=\bigcup_{j=1}^{M_n} B_{r_n}(x_j^n)\,,
\]
and note that from \eqref{eq:proprapprox1}, and \eqref{eq:proprapprox2} we get that
	\begin{equation}\label{eq:fphiAk}
		\F^{\phi}(A_n)\leq \frac{\mu^{\perp}(\Omega)}{n}\to0
	\end{equation}
as $n\to+\infty$.
Since $g\in L^1(\Omega;\F^{\phi})$, using \eqref{eq:flinearatinfinity} we get $\zeta(g)\in L^1(\Omega;\F^{\phi})$.
Therefore, the Lebesgue Dominated Convergence Theorem yields
	\begin{equation}\label{Fixing3}
		\lim_{n\to\infty}\int_{\Omega\setminus \bigcup_{j=1}^{M_n} B_{r_n}(x_j^n)} \zeta(g)\d \F^{\phi}
			=\int_\Omega \zeta(g)\d \F^{\phi}\,.
	\end{equation}
Using \eqref{Fixing1}, \eqref{Fixing2} and \eqref{Fixing3} we get the claim.\\
		
\emph{Step 4.} To complete the proof it remains to show that $h_n \F^{\varphi_n}\wt \mu$.
Note that
\[
\sup_{n\in\N} \int_\Omega h_n \d \F^{\varphi_n}<+\infty\,.
\]
Let $E\subset\subset \Omega $ be a bounded Borel set such that $\mu(\partial E)=0$.
Fix $\eta>0$, and take an open set $U\subset\Omega$ with $U\supset \partial E$ such that $\mu(U)<\eta$.
Set
	\[
	\mathcal{J}^n_{E}:=\{j\in \N  \ | \ Q_j^k \subset E\}, \qquad \ \mathcal{J}^n_{\partial E}
		:=\{j\in \N \ | \ Q_j^n \cap \partial E\neq \emptyset \}
	\]
and take $n_0\in\N$ large enough so that $j\in\mathcal{J}^n_{\partial E}$ implies $ Q_j^n\cc U$ for all $n>n_0$. 
Then
	\[
	h_n \F^{\varphi_n} (E)=\int_{E\setminus A_n } g\d \F^{\phi}
		+\sum_{j\in \mathcal{J}_{E}^n} \mu^{\perp}(Q_j^n)
	+ \sum_{j\in \mathcal{J}_{\partial E}^n}  \frac{\mu^{\perp}(Q_j^n)}{\F^{\varphi_n}( B_{r_n}(x_j^n))}\F^{\varphi_n}(E\cap B_{r_n}(x_j^n) ) \,.
	\]
Recalling \eqref{eq:lambdak}, we get
	\begin{align*}
	|h_n \F^{\varphi_n}(E)-(g\F^{\phi}+\lambda_n)(E)|&\leq
	\sum_{j=1}^{M_n} g  \F^{\phi} (B_{r_n}(x_j^n) ) +\sum_{j\in \mathcal{J}_{\partial E}^n}	
		\mu^{\perp}(Q_j^n)\left| 
			\frac{\F^{\varphi_n}(E\cap B_{r_n}(x_j^n) )}{\F^{\varphi_n}( B_{r_n}(x_j^n))} -1\right|\\
		&\leq\int_{A_n}g\d\F^{\phi}+\sum_{j\in \mathcal{J}_{\partial E}^n} \mu^{\perp}(Q_j^n) \leq \int_{A_n}g\d\F^{\phi}+\mu^{\perp}(U)\\
		&\leq  \int_{A_n}g\d\F^{\phi}+\eta.
	\end{align*}
Therefore, using \eqref{eq:fphiAk}, and the arbitrariness of $\eta>0$, we get
\begin{equation}\label{eq:limithk}
\lim_{n\to+\infty}|h_n \F^{\phi_n}(E)-(g\F^{\phi}+\lambda_n)(E)|=0\,.
\end{equation}
The claim follows at once by \eqref{eq:lambdak}, \eqref{eq:limithk} and the triangle inequality.
\end{proof}

Next result will allow us to consider only a special class of absolutely continuous couples. Let us introduce the following notation for partition of $\Omega$ into cubes.
Let $Q:=(-\frac{1}{2},\frac{1}{2})^d$. For $p\in\R^d$, and $\ell>0$, we define
	\[
	G_{p,\ell} :=\{(p+\ell z+\ell  Q)\cap \Omega \ | \ z\in \mathbb{Z}^d\}\, 
	\]
and we write $\partial G_{p,\ell} $ for
	\[
	\partial G_{p,\ell} := \Omega \cap  \left(\bigcup_{z\in \mathbb{Z}^d} (p+\ell z+\ell \partial Q)\right)\,.
	\]

\begin{definition}
We say that $(\phi,\mu)\in X\times \M^+(\Omega)$ is a \textit{regular absolutely continuous couple} if $\mu=g\F^{\phi}$ where $g\in L^1(\Omega,\F^{\phi})$ is of the form
\[
g=\sum_{i=1}^M \alpha_i \ca_{\Omega_i}\,,
\]
where $\alpha_1,\dots,\alpha_M\in (0,+\infty)$, and $\{\Omega_i\}_{i=1}^M=G_{p,\ell} $ for some $p\in\R^d$ and $\ell>0$, is such that
\[
\F^{\phi}(\partial G_{p,\ell} )=0\,.
\]
We denote by $ R(\Omega)$ the class of all regular absolutely continuous couples.
\end{definition}

\begin{proposition}\label{propo:fullRCVRYabsoluteCouple}
Let $\F\in \Ad$, $\zeta:[0,+\infty)\rightarrow (0,+\infty)$ be a convex function, and $(\phi,g \F^{\phi})\in X\times \mathcal{M}^+(\Omega)$, with $g\in L^1(\Omega,\F^{\phi})$.
Then there exists $\{g_n\}_{n\in\N}\subset L^1(\Omega,\F^{\phi})$ with $\{ (\phi, g_n\F^{\phi})\}_{n\in\N} \subset R(\Omega)$, such that
	\[
	g_n\F^{\phi}\wt g\F^{\phi}\,,
	\]
and
	\[
	\lim_{n\rightarrow +\infty} \int_{\Omega} \zeta(g_n) \d \F^{\phi}= \int_{\Omega} \zeta(g) \d \F^{\phi}.
	\]
\end{proposition}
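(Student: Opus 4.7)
The plan is to take $g_n$ as the piecewise-constant $\F^\phi$-conditional expectation of $g$ on a shrinking sequence of cubic grids whose boundaries are $\F^\phi$-null, and then combine a density-of-continuous-functions argument with Jensen's inequality and Fatou's lemma. Concretely, for each $n$ set $\ell_n := 1/n$ and pick a shift $p_n \in [0,\ell_n)^d$ with $\F^\phi(\partial G_{p_n,\ell_n}) = 0$; this is possible because for different shifts the boundary hyperplanes in each coordinate direction are pairwise disjoint, so only countably many shifts can carry $\F^\phi$-mass. Enumerate the cubes of $G_{p_n,\ell_n}$ meeting $\Omega$ as $\{\Omega_i^n\}_{i=1}^{M_n}$, fix any sequence $\varepsilon_n\downarrow 0$, and define
\[
g_n := \sum_{i=1}^{M_n} \alpha_i^n\,\ca_{\Omega_i^n}, \qquad \alpha_i^n := \max\Bigl\{\varepsilon_n,\ \frac{1}{\F^\phi(\Omega_i^n)}\int_{\Omega_i^n} g\,\d\F^\phi\Bigr\},
\]
with the convention $\alpha_i^n := \varepsilon_n$ whenever $\F^\phi(\Omega_i^n)=0$. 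Then $\alpha_i^n>0$ for every $i,n$, and $(\phi, g_n\F^\phi)\in R(\Omega)$.

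To prove $g_n \to g$ in $L^1(\Omega,\F^\phi)$, fix $\eta>0$ and approximate $g$ by a function $\tilde g\in C(\overline\Omega)$ with $\|g-\tilde g\|_{L^1(\F^\phi)}<\eta$. Letting $\tilde g_n$ denote the analogous cube-averaging of $\tilde g$, contractivity of averaging yields
\[
\|g_n-\tilde g_n\|_{L^1(\F^\phi)} \le \|g-\tilde g\|_{L^1(\F^\phi)} + \varepsilon_n\F^\phi(\Omega),
\]
whereas uniform continuity of $\tilde g$ on $\overline\Omega$ gives $\|\tilde g_n-\tilde g\|_{L^1(\F^\phi)}\le \omega_{\tilde g}(\sqrt d/n)\,\F^\phi(\Omega)\to 0$. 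Sending $n\to\infty$ and then $\eta\to 0$ yields $g_n\to g$ in $L^1(\F^\phi)$, which in turn implies $g_n\F^\phi\wt g\F^\phi$ by testing against arbitrary $\varphi\in C_0(\Omega)$.

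For the energy convergence, on each cube where $\alpha_i^n$ equals the true average, Jensen's inequality yields $\zeta(\alpha_i^n)\F^\phi(\Omega_i^n)\le \int_{\Omega_i^n}\zeta(g)\d\F^\phi$; on the remaining cubes the true average vanishes, so $g\equiv 0$ $\F^\phi$-a.e.\ there and the extra cost is $(\zeta(\varepsilon_n)-\zeta(0))\F^\phi(\Omega_i^n)$, summing to at most $|\zeta(\varepsilon_n)-\zeta(0)|\,\F^\phi(\Omega)\to 0$ by continuity of $\zeta$ at $0$. Hence $\limsup_n\int\zeta(g_n)\,\d\F^\phi\le\int\zeta(g)\,\d\F^\phi$. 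The matching liminf is obtained by a standard subsequence-plus-Fatou argument: any $\liminf$-realizing subsequence admits, via $L^1$-convergence, a further subsequence with $g_{n_k}\to g$ $\F^\phi$-a.e., and Fatou applied to $\zeta(g_{n_k})$ gives $\int\zeta(g)\d\F^\phi\le\liminf_k\int\zeta(g_{n_k})\d\F^\phi$, which combined with the upper bound forces convergence of the full sequence. The most delicate point is reconciling the regularity requirement $\alpha_i>0$ with the energy balance on cubes where the genuine average vanishes; the $\varepsilon_n$ perturbation handles this precisely because $\zeta$ is continuous near $0$, so the cost is controlled by the finite total $\F^\phi$-mass. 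Everything else is routine, once the $\F^\phi$-null boundary grids have been secured by the countability argument.
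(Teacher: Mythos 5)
Your construction is essentially the one in the paper: a grid $G_{p_n,1/n}$ whose boundary is $\F^\phi$-null, piecewise constant densities given by the $\F^\phi$-averages of $g$ on the cubes, and Jensen's inequality for the upper energy bound. You deviate in the two convergence proofs, and both variants work. For the measure convergence you prove the stronger statement $g_n\to g$ in $L^1(\Omega,\F^\phi)$ by comparison with a continuous approximant, which immediately yields $g_n\F^\phi\wt g\F^\phi$; the paper instead verifies $g_n\F^\phi(E)\to g\F^\phi(E)$ on Borel sets $E$ with null boundary by estimating the cubes meeting $\partial E$. For the liminf of the energies you use a.e.\ convergence along subsequences plus Fatou, while the paper invokes lower semicontinuity of convex integral functionals under weak* convergence (\cite[Theorem 2.34]{AFP}). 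Your $\varepsilon_n$-floor also repairs a point the paper glosses over: the paper assigns the exact average on every cube of positive $\F^\phi$-mass, and this average can vanish where $g$ does, contradicting the requirement $\alpha_i>0$ in the definition of $R(\Omega)$; your $\max\{\varepsilon_n,\cdot\}$ avoids this.

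Two small repairs are needed in your energy estimate. First, the cubes on which the floor is active are not only those where the average of $g$ vanishes: they include cubes where the average lies in $(0,\varepsilon_n)$, so the assertion that $g\equiv 0$ $\F^\phi$-a.e.\ there is incorrect. The estimate still goes through: on such a cube, $\zeta(\varepsilon_n)\F^\phi(\Omega_i^n)\le \int_{\Omega_i^n}\zeta(g)\,\d\F^\phi+\omega_\zeta(\varepsilon_n)\,\F^\phi(\Omega_i^n)$, where $\omega_\zeta(\varepsilon_n):=\sup\{|\zeta(s)-\zeta(t)|\ :\ s,t\in[0,\varepsilon_n]\}$ and the first term comes from Jensen applied to the true average. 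Second, $\omega_\zeta(\varepsilon_n)\to 0$, as well as the pointwise convergence $\zeta(g_{n_k})\to\zeta(g)$ needed for Fatou at points where $g=0$, require $\zeta$ to be continuous at $0$; convexity on $[0,+\infty)$ alone does not give this, since one may have $\lim_{t\to 0^+}\zeta(t)<\zeta(0)$. In every use in the paper $\zeta$ is $\psi^c$ or $\psi^{cs}$, which are suprema of affine functions and hence continuous at $0$, and the paper's own liminf step also implicitly requires $\zeta$ lower semicontinuous, so this is a caveat to state rather than a defect peculiar to your argument.
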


\begin{proof}
For $n\in\N$ let $p_n\in\R^d$ be such that
\[
\F^\phi(\partial G_{p_n,1/n})=0\,
\]
Let $\{Q_j^n\}_{j=1}^{M_n}$ be the cubes such that $G_{p_n,1/n}=\{Q_j^n\cap\Omega\}_{j=1}^{M_n}$ and set
	\[
	\mathcal{J}^n_{\F^{\phi}}:=\{j=1,\ldots,M_n \ |  \ \F^{\phi}(Q_j^n\cap\Omega)>0\}\,.
	\]
For $j\in \mathcal{J}^n_{\F^{\phi}}$ set
	\begin{equation}
	\alpha_j^n:= \frac{1}{\F^{\phi}(Q_j^n\cap \Omega)}\int_{Q_j^n\cap \Omega} g \d \F^{\phi}\,,
	\end{equation}
and for $j\notin \mathcal{J}^n_{\F^{\phi}}$ set
\[
\alpha_j^n:=\frac{1}{nM_n}\,.
\]
Define
	\[
	g_n:=\sum_{j=1}^{M_n} \alpha_j^n \ca_{Q_j^n}\,.
	\]
We claim that $g_n\F^{\phi}\wt g\F^{\phi}$.
Note that
\[
\sup_{n\in\N} \int_\Omega g_n \d \F^\phi<+\infty\,.
\]
Let $E\subset\subset\Omega$ be a bounded Borel set with $g\F^{\phi}(\partial E)=0$.
Fix $\eta>0$, and let $U\supset \partial E$ be an open set with $g\F^{\phi}(U)\leq \eta$.
Let
\[
\mathcal{J}^n_{\partial E}:=\{\, j=1,\dots, M_n \,\,|\,\, Q_j^n \cap \partial E\neq\emptyset \,\}\,,
\]
and take $n\in\N$ large enough so that $j\in \mathcal{J}^n_{\partial E}$ implies $Q_j^n\subset U$ for all $n\geq n_0$.
Then
\begin{align*}
|g_n\F^\phi(E)-g\F^\phi(E)| &\leq \sum_{j\in \mathcal{J}^n_{\partial E}}
	\left|\,\int_{Q^n_j\cap E} g_n-g \d \F^\phi \,\right| + \frac{1}{n} \\
&\leq\sum_{j\in \mathcal{J}^n_{\partial E}} \left|\, \frac{\F^\phi(Q^n_j\cap E)}{\F^\phi(Q^n_j)} -1 \,\right|
	\int_{Q^n_j} g \d \F^\phi +\frac{1}{n}\\
&\leq g\F^\phi(U) +\frac{1}{n}\\
&\leq \eta+\frac{1}{n}\,.
\end{align*}
Therefore
\[
\lim_{n\to+\infty} |g_n\F^\phi(E)-g\F^\phi(E)|\leq \eta\,.
\]
Since $\eta>0$ is arbitrary, we get the claim.
To conclude the proof we have to show that
\[
\lim_{n\to+\infty} \int_\Omega \zeta(g_n) \d \F^{\phi}=\int_\Omega \zeta(g) \d \F^\phi\,.
\]
Using the convexity of $f$, we have
	\begin{align*}
	\int_{\Omega} \zeta (g_n)\d \F^{\phi}&=\sum_{j=1}^{M_n}	\int_{Q_j^n\cap \Omega} \zeta(\alpha_j^n)\d \F^{\phi}\\
	&=\sum_{j\in \mathcal{J}_{\F^{\phi}}^n}	\F^{\phi}(Q_j^n\cap \Omega) \zeta \left(\frac{1}{\F^{\phi}(Q_j^n\cap\Omega)}
		\int_{Q_j^n\cap\Omega} g\d\F^{\phi}\right)\\
	&\leq \sum_{j\in \mathcal{J}_{\F^{\phi}}^n}	\int_{Q_j^n\cap \Omega}  \zeta(g)\d\F^{\phi}\\
	&\leq \int_{\Omega}  \zeta(g)\d\F^{\phi}\,.
	\end{align*}
Therefore
\begin{equation}\label{eq:limsupfgk}
\limsup_{n\to+\infty} \int_\Omega \zeta(g_n) \d \F^{\phi} \leq \int_\Omega \zeta(g) \d \F^\phi\,.
\end{equation}
On the other hand, using $g_n\F^{\phi}\wt g\F^{\phi}$, the convexity of $f$, and \cite[Theorem 2.34]{AFP}, we get
\begin{equation}\label{eq:liminffgk}
\int_\Omega \zeta(g) \d \F^\phi \leq \liminf_{n\to+\infty} \int_\Omega \zeta(g_n) \d \F^{\phi}\,.
\end{equation}
Using \eqref{eq:limsupfgk}, and \eqref{eq:liminffgk} we conclude.
\end{proof}

\begin{proposition}\label{propo:fromConvToSubConv}
Let $\F\in\Cllc$, and $(\phi,g \F^{\phi})\in R(\Omega)$.
Then there exists $\{(\varphi_n, h_n \F^{\varphi_n})\}_{n\in\N}\subset R(\Omega)$ such that
	\[
	\lim_{n\to+\infty} d( (\varphi_n, h_n\F^{\varphi_n}),(\phi, g\F^{\phi}) )
	\]
and
	\[
	\lim_{n\to+\infty}  \int_\Omega \psi^{cs}(h_n)\d \F^{\varphi_n}
		=\int_\Omega\psi^c(g)\d \F^{\phi}\,.
	\]
Moreover, if we write $h_n=\sum_{i=1}^M \kappa_i^n \ca_{\Omega^n_i}$, we can ensure that $\kappa^n_i\leq t_0$ for all $i=1,\ldots, M_n$, and $n\in\N$, where $t_0>0$ is given by Lemma \ref{lem:finalConv}.
\end{proposition}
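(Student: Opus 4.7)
The key observation is Lemma \ref{lem:finalConv}: $\psi^{cs}$ coincides with $\psi^c$ on $[0,t_0]$, so if I ensure $h_n\leq t_0$ everywhere then $\psi^{cs}(h_n)=\psi^c(h_n)$, and the target equality reduces to $\int\psi^c(h_n)\d\F^{\varphi_n}\to\int\psi^c(g)\d\F^{\phi}$. The strategy is therefore to modify the underlying measure via property (Ad4) to amplify $\F^{\phi}$ precisely on the cells where $g>t_0$, so that the ``excess density'' is absorbed into a larger Jacobian while the new density stays below $t_0$. The case $t_0=+\infty$ is trivial, since then $\psi^{cs}\equiv\psi^c$ and one takes $(\varphi_n,h_n)=(\phi,g)$; assume from now on $t_0<+\infty$.

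Split the indices into $\mathcal I_1:=\{i:\alpha_i\leq t_0\}$ and $\mathcal I_2:=\{i:\alpha_i>t_0\}$. For $i\in\mathcal I_1$ set $\sigma_i:=1$ and $\kappa_i:=\alpha_i$: no modification is needed on $\Omega_i$. For $i\in\mathcal I_2$ I apply the intermediate value theorem to the continuous map $F_i(\sigma):=\sigma\,\psi^c(\alpha_i/\sigma)$ on $[\alpha_i/t_0,+\infty)$. At the left endpoint, $F_i(\alpha_i/t_0)=(\alpha_i/t_0)\psi^c(t_0)=\Theta^{cs}\alpha_i=\psi^{cs}(\alpha_i)\leq\psi^c(\alpha_i)$. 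At infinity, the constant function $\inf\psi$ being an admissible competitor in the definition of the convex envelope gives $\psi^c\geq\inf\psi>0$, so $F_i(\sigma)\geq\sigma\inf\psi\to+\infty$. Hence there exists $\sigma_i\geq\alpha_i/t_0$ with $F_i(\sigma_i)=\psi^c(\alpha_i)$, and I set $\kappa_i:=\alpha_i/\sigma_i\in(0,t_0]$.

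Now define $f:=\sum_{i=1}^M\sigma_i\ca_{\Omega_i}$ (bounded, $\geq 1$, and in $L^1(\Omega,\F^{\phi})$) and $h:=\sum_{i=1}^M\kappa_i\ca_{\Omega_i}$. Property (Ad4) applied to $f$ delivers $\{\varphi_n\}\subset X$ with $\varphi_n\to\phi$ in $L^1(\Omega)$ and $\F^{\varphi_n}\wt f\F^{\phi}$. Take $h_n:=h$ for every $n$; then $\kappa^n_i=\kappa_i\leq t_0$. The weak$^*$ convergence $h_n\F^{\varphi_n}\wt g\F^{\phi}$ follows from multiplying the measure convergence by the bounded Borel function $h$, which is continuous $f\F^{\phi}$-almost everywhere (its jump set is contained in $\partial G_{p,\ell}\cup\partial\Omega$, an $\F^{\phi}$-null set since $(\phi,g\F^{\phi})\in R(\Omega)$) together with the pointwise identity $hf=g$. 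A cell-by-cell computation gives
\[
\int_\Omega\psi^{cs}(h_n)\d\F^{\varphi_n}
=\sum_{i=1}^M\psi^{cs}(\kappa_i)\F^{\varphi_n}(\Omega_i)
\longrightarrow\sum_{i=1}^M\psi^c(\kappa_i)\sigma_i\F^{\phi}(\Omega_i)
=\sum_{i=1}^M\psi^c(\alpha_i)\F^{\phi}(\Omega_i)
=\int_\Omega\psi^c(g)\d\F^{\phi},
\]
using $\kappa_i\leq t_0$ to replace $\psi^{cs}(\kappa_i)$ by $\psi^c(\kappa_i)$ and the defining relation $\sigma_i\psi^c(\kappa_i)=\psi^c(\alpha_i)$.

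The main technical obstacle I anticipate is ensuring $(\varphi_n,h_n\F^{\varphi_n})\in R(\Omega)$: one needs $h_n$ piecewise constant on a grid whose boundary is $\F^{\varphi_n}$-null. Property (Ad4) only yields weak$^*$ convergence, which does not a priori prevent $\F^{\varphi_n}$ from placing atoms on $\partial G_{p,\ell}$ at finite $n$. The fix is to translate the grid by a vector $\tau_n\to 0$ chosen so that $\F^{\varphi_n}(\partial G_{p+\tau_n,\ell})=0$ (the set of bad translations being Lebesgue-null for each $n$) and to redefine $h_n$ on the shifted grid; all the convergences established above are stable under this vanishing perturbation, via the diagonal argument of Remark \ref{rem:Gamma}.
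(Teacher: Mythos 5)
Your construction is internally consistent and does prove the displayed equality exactly as printed, but the printed equality has the two envelopes in the wrong places, and your calibration of $\sigma_i$ is tuned to that (mistyped) target. Comparing with the way the proposition is invoked in the proof of Theorem \ref{thm:mainthm1} (where one needs $\int_\Omega \psi^c(h_3)\,\d\F^{\varphi_2}$ close to $\int_\Omega \psi^{cs}(h_2)\,\d\F^{\varphi_1}$) and with the paper's own proof, the statement actually needed is
\[
\lim_{n\to+\infty}\int_\Omega\psi^{c}(h_n)\,\d\F^{\varphi_n}=\int_\Omega\psi^{cs}(g)\,\d\F^{\phi}\,,
\]
that is, the $\psi^c$-energy of the approximants must converge \emph{down} to the $\psi^{cs}$-energy of the target. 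This is the entire point of the step ``from convex to convex sub-additive'': on a cell with $\alpha_i>t_0$ the contribution must drop from $\psi^c(\alpha_i)\F^\phi(\Omega_i)$ to $\psi^{cs}(\alpha_i)\F^\phi(\Omega_i)=\alpha_i\frac{\psi^c(t_0)}{t_0}\F^\phi(\Omega_i)$, which is in general strictly smaller. Your intermediate value argument instead selects $\sigma_i$ so that $\sigma_i\psi^c(\alpha_i/\sigma_i)=\psi^c(\alpha_i)$, which preserves each cell's energy exactly and produces no relaxation gain; feeding that into the proof of Theorem \ref{thm:mainthm1} would leave you with the recovery problem for $\int_\Omega\psi^c(g)\,\d\F^\phi$ rather than $\int_\Omega\psi^{cs}(g)\,\d\F^\phi$, and the limsup inequality would not close.

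The fix is the left endpoint of your own IVT interval: on the cells with $\alpha_i>t_0$ take $\sigma_i:=\alpha_i/t_0$ and $\kappa_i:=t_0$ (and $\sigma_i=1$, $\kappa_i=\alpha_i$ elsewhere), so that $\sigma_i\psi^c(\kappa_i)=\alpha_i\psi^c(t_0)/t_0=\psi^{cs}(\alpha_i)$ by Lemma \ref{lem:finalConv} --- you even compute this value yourself when evaluating $F_i$ at $\alpha_i/t_0$. With that choice your argument becomes, step for step, the paper's proof: apply (Ad4) to $f=\sum_i\sigma_i\ca_{\Omega_i}$, translate the grid by a vanishing vector so that $\F^{\varphi_n}$ does not charge the cell boundaries, and pass to the limit cell by cell. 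The remaining ingredients of your write-up (boundedness of $f$ and $f\geq 1$, the weak$^*$ convergence $h_n\F^{\varphi_n}\wt g\F^\phi$ via $hf=g$ and the $\F^\phi$-nullity of $\partial G_{p,\ell}$, and the translation/diagonal argument to stay in $R(\Omega)$) are sound and coincide with the paper's.
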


\begin{proof}
If $t_0=+\infty$, then there is nothing to prove, since this would mean that $\psi^{cs}=\psi^c$.
Thus, assume $t_0\in(0,+\infty)$.
Write $g=\sum_{i=1}^M \alpha_i \ca_{\Omega_i}$, where $\alpha_i>0$, and let
\[
\mathcal{I}:=\{ i=1,\ldots, M \ | \ \alpha_i > t_0\}\,.
\]
Define the function
	\[
	f:=\sum_{i=1}^M\beta_i\ca_{\Omega_i}\,,
	\]
where
\begin{equation}\label{eqn:defBETA}
	\beta_i:=\left\{
	\begin{array}{ll}
	\frac{\alpha_i}{t_0} & \quad \text{if $i\in \mathcal{I}$} \,,\\
	1 & \quad \text{if $i\notin \mathcal{I}$}\,,
		\end{array}
	\right.
	\end{equation}
Let $\{\phi_n\}_{n\in \N}\subset X$ be the sequence given by Definition \ref{def:Admissible} such that
$\phi_n\to\phi$ in $X$, and 
\begin{equation}\label{eq:convFphin}
\F^{\phi_n}\wt f\F^\phi\,.
\end{equation}
In particular, for all $i=1,\dots,M$, it holds $\F^{\phi_n}(\partial\Omega_i)\to0$.
It is then possible, for all $n$ sufficiently large, to choose $\delta_n>0$ such that
\begin{equation}\label{eq:approxboundary}
\F^{\phi_n}\left( (\partial\Omega_i)_{\delta_n} \right)<\frac{1}{n}\,,
\end{equation}
for all $i=1,\dots,M$.
Here $(\partial\Omega_i)_{\delta_n}:=\{ x\in\R^d \,\,|\,\, \mathrm{dist}(x,\partial\Omega_i)<\delta_n \}$.
By definition of regular absolutely continuous couple we have that
$\{\Omega_i\}_{i=1}^M = G_{p,\ell} $ for some $p\in \R^d$, $\ell>0$.
Since $\F^{\phi_n}$ is a Radon measure, it is possible to slightly translates the underline grid of cubes of a small vectors so that $G_{p+v,\ell}$ do not charge energy $\F^{\phi_n}$ on $\partial G_{p+v,\ell}(\Omega)$.
More precisely, we can find a sequence $\{v_n\}_{n\in\N}\subset\R^d$ with $|v_n|<\delta_n$, such that
	\[ 
	\{ \widetilde{\Omega}^n_i\}_{i=1}^M=G_{p+v_n,\ell}
	\]
and
\[
\F^{\phi_n}(\partial \widetilde{\Omega}^n_i)=0\,,
\]
for all $i=1,\dots,M$ and $n\in\N$. 
Define, for $n\in\N$,
	\[
	h_n:=\sum_{i\in \mathcal{I}}t_0\ca_{\widetilde{\Omega}^n_i}
		+\sum_{i\notin \mathcal{I}}\alpha_i\ca_{\widetilde{\Omega}^n_i}\,.
	\]
We claim that $h_n\F^{\phi_n}\wt g\F^{\phi}$.
Indeed,
\[
\sup_{n\in\N} \int_\Omega h_n \d \F^{\phi_n}<+\infty
\]
and, for any bounded Borel set $E\subset\subset  \Omega$ with $\F^{\phi}(\partial E)=0$, we have that
	\begin{align}\label{eq:firstineq}
	\left| \int_E h_n\d \F^{\phi_n}- \int_E g\d\F^{\phi}\right|&\leq 
		\sum_{i\in \mathcal{I} }  | t_0 \F^{\phi_n}(\widetilde{\Omega}^n_i\cap E)
			- \alpha_i \F^{\phi}(\Omega_i\cap E)| \nonumber \\
	&\hspace{0.6cm}+ \sum_{i\notin \mathcal{I} } \alpha_i \left| \F^{\phi_n}(\widetilde{\Omega}^n_i\cap E)
		-   \F^{\phi}(\Omega_i\cap E)\right| \nonumber \\
	&\leq \sum_{i\in \mathcal{I} }  | t_0 \F^{\phi_n}(\Omega_i\cap E)-\alpha_i \F^{\phi}(\Omega_i\cap E)|
		\nonumber \\
	&\hspace{0.6cm}+t_0 \sum_{i\in \mathcal{I} } |  \F^{\phi_n}(\widetilde{\Omega}^n_i\cap E)
			-\F^{\phi_j}(\Omega_i\cap E)| \nonumber \\
	&\hspace{0.6cm}+\sum_{i\notin \mathcal{I} } \alpha_i |  \F^{\phi_n}(\widetilde{\Omega}^n_i\cap E)
				-\F^{\phi_n}(\Omega_i\cap E)| \nonumber \\
	&\hspace{0.6cm}+\alpha_i \sum_{i\not\in \mathcal{I} } |  \F^{\phi_n}(\widetilde{\Omega}^n_i\cap E)
			-\F^{\phi_n}(\Omega_i\cap E)| \nonumber \\	
	&\leq \sum_{i\in \mathcal{I} }  | t_0 \F^{\phi_n}(\Omega_i\cap E)-\alpha_i \F^{\phi}(\Omega_i\cap E)| 
		+C\F^{\phi_n}\left( (\partial\Omega_i)_{\delta_n} \right) \nonumber \\
	&\leq \sum_{i\in \mathcal{I} }  | t_0 \F^{\phi_n}(\Omega_i\cap E)-\alpha_i \F^{\phi}(\Omega_i\cap E)|
		+C\frac{1}{n}\,,
	\end{align}
where $C:=\max\{t_0,\alpha_1,\dots,\alpha_M\}$, and in the last step we used \eqref{eq:approxboundary}.
We now observe that
$\F^{\phi}(\partial (\Omega_i \cap E))=0$.
Since $f\geq1$, this also implies $f\F^{\phi}(\partial (\Omega_i \cap E))=0$ for all $i\in\mathcal{I}$.
Using \eqref{eq:convFphin}, for all $i\in \mathcal{I}$ we get
	\[
	\lim_{n\rightarrow +\infty}\F^{\phi_n}(\Omega_i\cap E)= \int_{\Omega_i\cap E} f \d \F^{\phi}=\frac{\alpha_i}{t_0}\F^{\phi}(\Omega_i\cap E).
	\]
Therefore, from \eqref{eq:firstineq} we deduce that
\[
\lim_{n\to+\infty} \left| \int_{E} h_n\d \F^{\phi_n}- \int_{E} g\d\F^{\phi}\right| =0\,.
\]
To conclude the proof, \eqref{eq:convFphin} together with \eqref{eq:approxboundary} yield
\begin{align*}
\lim_{n\to+\infty}\int_{\Omega}	\psi^c (h_n)\d \F^{\phi_n}
&=\lim_{n\to+\infty}\left[\, \sum_{i\in \mathcal{I}} \F^{\phi_n}(\widetilde{\Omega}^n_i)
	\psi^c\left(t_0\right) + \sum_{i\notin \mathcal{I}}  \F^{\phi_n}(\widetilde{\Omega}^n_i)
		\psi^c\left(\alpha_i\right) \,\right]\\
	&=\sum_{i\in \mathcal{I}} \F^{\phi}(\Omega_i) \alpha_i\frac{\psi^{c}\left(t_0\right)}{t_0}
		+ \sum_{i\notin \mathcal{I}}  \F^{\phi}(\Omega_i) \psi^c\left(\alpha_i\right)\\
	&= \sum_{i\in \mathcal{I}} \F^{\phi}(\Omega_i) \psi^{cs}(\alpha_i)
		+ \sum_{i\notin \mathcal{I}}  \F^{\phi}(\Omega_i) \psi^{cs}\left(\alpha_i\right)\\
	&= \int_{\Omega} \psi^{cs}(g)\d \F^{\phi}\,,
\end{align*}
where in the third equality above we used Lemma \ref{lem:finalConv}.
\end{proof}

\begin{proposition}\label{propo:BorToConv}
Let $\F\in \Ad$, and $\{\F_n\}_{n\in\N}\in \mathrm{GA}(\F)$.
Then, for every $(\phi,g \F^{\phi})\in R(\Omega)$ there exists a sequence $\{(\varphi_n,h_n \F_n^{\varphi_n})\}_{n\in \N}\subset X\times \mathcal{M}^+(\Omega)$ such that
\begin{equation*}
	\lim_{n\to+\infty}\mathrm{d}( (\varphi_n,h_n \F_n^{\varphi_n}), (\phi, g \F^\phi) )=0\,,
	\end{equation*}
and
\begin{equation*}
\limsup_{n\rightarrow +\infty} 
	\int_{\Omega} \psi (h_n ) \d \F_n^{\varphi_n} \leq \int_{\Omega} \psi^c(g)\d \F^\phi\, .
	\end{equation*}
\end{proposition}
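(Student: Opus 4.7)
The plan is to reduce the problem to a two-value approximation of each $\alpha_i$ dictated by the variational characterisation of $\psi^c$ in Lemma \ref{lem:convenvel}, and then implement this two-value split at a fine spatial scale by invoking the crumbling Lemma \ref{lem:Crumble} on the measures $\F_n^{\varphi_n}$. The role of (GA3) is to propagate the convergence from the energies $\F_n^{\varphi_n}$ to the weighted measures $h_n \F_n^{\varphi_n}$.

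First, I would select, via (GA3), a sequence $\{\varphi_n\}_{n \in \N} \subset X$ with $\varphi_n \to \phi$ in $L^1$, $\F_n^{\varphi_n} \wt \F^\phi$ and $\F_n^{\varphi_n}(\Omega) \to \F^\phi(\Omega)$. Writing $g = \sum_{i=1}^M \alpha_i \ca_{\Omega_i}$, I fix $\delta > 0$ and, by Lemma \ref{lem:convenvel}, choose for each $i$ parameters $\lambda_i \in [0,1]$ and $t_1^i, t_2^i \in (0,+\infty)$ with
\[
\lambda_i t_1^i + (1-\lambda_i) t_2^i = \alpha_i,\qquad \lambda_i \psi(t_1^i) + (1-\lambda_i)\psi(t_2^i) \leq \psi^c(\alpha_i) + \delta.
\]
Since $(\phi,g\F^\phi) \in R(\Omega)$, we have $\F^\phi(\partial \Omega_i) = 0$, and by choosing representatives of $\Omega_i$ that are well-adapted to $\F_n^{\varphi_n}$ (up to a vanishing translation as in the proof of Proposition \ref{propo:fromConvToSubConv}), we may assume $\F_n^{\varphi_n}(\partial \Omega_i) \to 0$ and $\F_n^{\varphi_n}(\Omega_i) \to \F^\phi(\Omega_i)$.

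Next, I would apply Lemma \ref{lem:Crumble} to each $\Omega_i$ to produce, for every $n$, a Borel set $A_i^n \subset \Omega_i$ (together with its complement $B_i^n := \Omega_i \setminus A_i^n$) obtained as the union of many small sub-pieces of a refining partition of $\Omega_i$, in such a way that
\[
\F_n^{\varphi_n}(A_i^n) \longrightarrow \lambda_i \, \F^\phi(\Omega_i), \qquad \F_n^{\varphi_n}(B_i^n) \longrightarrow (1-\lambda_i)\, \F^\phi(\Omega_i),
\]
and, crucially, that this happens \emph{locally}, namely $\ca_{A_i^n} \F_n^{\varphi_n} \wt \lambda_i \ca_{\Omega_i} \F^\phi$ and analogously for $B_i^n$. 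Setting
\[
h_n := \sum_{i=1}^M \bigl( t_1^i \, \ca_{A_i^n} + t_2^i \, \ca_{B_i^n}\bigr),
\]
the weak-$*$ convergence $h_n \F_n^{\varphi_n} \wt g\F^\phi$ then follows by testing against $\varphi \in C_0(\Omega)$, while on the energy side
\[
\int_\Omega \psi(h_n) \d \F_n^{\varphi_n} = \sum_{i=1}^M \bigl[ \psi(t_1^i)\F_n^{\varphi_n}(A_i^n) + \psi(t_2^i)\F_n^{\varphi_n}(B_i^n) \bigr] \longrightarrow \sum_{i=1}^M \bigl[\lambda_i\psi(t_1^i)+(1-\lambda_i)\psi(t_2^i)\bigr] \F^\phi(\Omega_i),
\]
which is bounded above by $\int_\Omega \psi^c(g)\d \F^\phi + \delta \F^\phi(\Omega)$.

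To conclude, I would remove the $\delta$ via the diagonal argument of Remark \ref{rem:Gamma} applied to the metric $\d$ on $L^1(\Omega)\times \M^+(\Omega)$. I expect the main technical obstacle to be establishing that the split $A_i^n \sqcup B_i^n$ can be done at a fine enough spatial scale, simultaneously for all $i$ and compatibly with the weak-$*$ convergence of $\F_n^{\varphi_n}$: a single, globally chosen subset of $\Omega_i$ would match the total mass proportion but fail to give $h_n \F_n^{\varphi_n} \wt g\F^\phi$, because the mass would be delivered non-uniformly. This is precisely the delicate property that Lemma \ref{lem:Crumble} is designed to provide, and the rest of the argument (the energy computation and the diagonalisation) is then a direct consequence of (GA3) and the choice of $(\lambda_i, t_1^i, t_2^i)$.
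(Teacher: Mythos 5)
Your proposal is correct and follows essentially the same route as the paper's proof: select $\varphi_n$ via (GA3) (whose non-atomicity is what lets you crumble), split each $\alpha_i$ into two values via Lemma \ref{lem:convenvel}, use Lemma \ref{lem:Crumble} together with a diagonal extraction in the crumbling scale so that the two-valued density $h_n$ delivers the mass locally in the weak-$*$ sense, and then pass to the energy estimate. The only cosmetic differences are that the paper absorbs the error by taking $\delta_n\to 0$ in the convex-envelope decomposition (with the compatibility condition on $\F_n^{\varphi_n}(\partial\Omega_i)$) instead of a final diagonalisation over $\delta$, and that your translation of the grid is not needed, since $\F_n^{\varphi_n}(\partial\Omega_i)\to 0$ already follows from $\F_n^{\varphi_n}\wt\F^\phi$, $\F_n^{\varphi_n}(\Omega)\to\F^\phi(\Omega)$ and $\F^\phi(\partial\Omega_i)=0$.
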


\begin{proof}
Write $g=\sum_{i=1}^M \alpha_i \ca_{\Omega_i}$, where $\alpha_i>0$.
By exploiting property (GA3) of Definition \ref{def:goodApproximating}, it is possible to find
$\{\varphi_n\}_{n\in \N}\subset X$ such that  $\F_n^{\varphi_n}$ is non-atomic for each $n\in \N$,
$\varphi_n\rightarrow \phi$ in $L^1(\Omega)$, $\F_n^{\varphi_n}\wt \F^{\phi}$, and $\F_n^{\varphi_n}(\Omega)\to\F^{\phi}(\Omega)$.
Using Lemma \ref{lem:convenvel}, for all $i=1,\dots,M$, and $n\in\N$, it is possible to find
$\lambda^i_n\in[0,1]$, and $s^i_n,t^i_n\in [0,+\infty)$ with
\begin{equation}\label{eq:alphai}
\lambda^i_n s^i_n+(1-\lambda^i_n)t^i_n=\alpha_i
\end{equation}
such that
	\begin{equation}\label{eq:psic}
	\lambda^i_n \psi(s^i_n)+(1-\lambda^i_n)\psi(t^i_n) \leq \psi^c\left(\alpha_i\right) + \delta_n ,
	\end{equation}
and
\begin{equation}\label{eq:approxFphi2}
\lim_{n\to+\infty} \Biggl(\, \sup_{\substack{i=1,\ldots,M} }
	\{s^i_n, t^i_n, \psi(s^i_n), \psi(t^i_n)\} \,\Biggr)
		\sum_{i=1}^M\F_n^{\varphi_n}(\partial \Omega_i) =0\,.
\end{equation}
where $\delta_n\to0$ as $n\to+\infty$.
Up to a subsequence, not relabeled, we can assume $\lambda^i_n\to\lambda^i\in[0,1]$, as $n\to+\infty$.

By invoking Lemma \ref{lem:Crumble}, for all $i=1,\ldots,M$, there exists a sequence of Borel sets $\{R^i_m\}_{m\in \N}$ with $R^i_m\subset \Omega_i$ having the following properties:
	\begin{itemize}
	\item[a)] $\F_n^{\varphi_n} \restr R^i_m\wt \lambda^i\F_n^{\varphi_n}\restr\Omega_i$ as $m\rightarrow +\infty$;
	\item[b)] $\F_n^{\varphi_n} \restr (\Omega_i\setminus R^i_m)\wt (1-\lambda^i)\F_n^{\varphi_n}\restr\Omega_i$ as $m\rightarrow +\infty$;
	\item[c)] $\F_n^{\varphi_n}(\partial R^i_m)=\F^\phi (\partial R^i_m)=0$ for all $n\in \N$, $m\in \N$, $i=1,\ldots,M$.
	\end{itemize}
Using the fact that $\F^{\phi_n}_n\wt\F^\phi$ and that $\F^\phi(\partial\Omega_i)=0$ for all $i=1,\dots,M$, it is possible to select a subsequence $\{m_n\}_{n\in\N}$ with $m_n\to+\infty$ as $n\to+\infty$ such that
\begin{equation}\label{eq:propertymn}
\F_n^{\varphi_n} \restr R^i_{m_n}\wt \lambda^i\F^\phi\restr\Omega_i\,,
\quad\quad\quad
\F_n^{\varphi_n} \restr (\Omega_i\setminus R^i_{m_n})\wt (1-\lambda^i)\F^\phi\restr\Omega_i\,,
\end{equation}
and
\begin{equation}\label{eq:psilambda}
\psi(s^i_n)|\,\F_n^{\varphi_n}(R^i_{m_n})-\lambda^i\F^\phi(\Omega_i) 
\,| + \psi(t^i_n) |\F_n^{\varphi_n}(\Omega_i\setminus R^i_{m_n}) -(1-\lambda^i)\F^\phi(\Omega_i)\,|  \to 0
\end{equation}
as $n\to+\infty$ for all $i=1,\dots,M$.
Define
	\[
	h_n:=\sum_{i=1}^{M} s^i_n\ca_{R^i_{m_n}}+ t^i_n \ca_{\Omega_i \setminus R^i_{m_n}}\,.
	\]
We claim that
\begin{equation}\label{gnamo}
	\lim_{n\rightarrow +\infty} \mathrm{d}_{\M} ( h_n \F_{n}^{\varphi_n}, g   \F^{\phi})=0.
	\end{equation}
Let $E\subset\subset \Omega$ be a bounded Borel set with $g\F^\phi(\partial E)=0$.
Then, using \eqref{eq:alphai}, and property c) above, we get
\begin{align*}
\left|\int_E h_n \d \F_n^{\varphi_n}-\int_E  g \d \F^\phi \right|
&\leq \sum_{i=1}^M s^i_n|\F_n^{\varphi_n}(R^i_{m_n}\cap E) - \lambda^i\F^\phi(E\cap \Omega_i)|\\
	&\hspace{1cm} +\sum_{i=1}^M  t^i_n | \F_n^{\varphi_n}(E\cap (\Omega_i\setminus R^i_{m_n}) )
		- (1-\lambda^i) \F^\phi(E\cap \Omega_i) |\\
	&\hspace{1cm}+\left(\, \sup_{i=1,\dots,M}\{s^i_n, t^i_n\}\,\right) 
		\sum_{i=1}^M\F_n^{\varphi_n}(\partial \Omega_i) \\
\end{align*}
Thanks to \eqref{eq:approxFphi2}, \eqref{eq:propertymn} and the the fact that $\lambda^i_n\to\lambda^i$, we conclude that
	\begin{equation}\label{eq:doublelimit}
	\lim_{n\rightarrow +\infty} \left|\int_{E} h_n \d \F_{n}^{\varphi_n} -\int_{E} g \d \F^{\phi}\right|=0.
	\end{equation}
Since
\[
\sup_{n\in\N} \,\int_\Omega h_n \d \F^{\phi_n}<+\infty\,,
\]
we obtain \eqref{gnamo}.
Finally, we note that
\begin{align*}
\int_{\Omega} \psi(h_n) \d \F_n^{\varphi_n}=
	&\sum_{i=1}^{M}\psi(s^i_n)\F_n^{\varphi_n}(R^i_{m_n})
		+\psi(t^i_n)\F_n^{\varphi_n}(\Omega_i\setminus R^i_{m_n})\\
		&\hspace{1cm}+ \sup_{i=1,\dots,M }\{\psi(s^i_j), \psi(t^i_j)\}
			\sum_{i=1}^M\F_n^{\varphi_n}(\partial \Omega_i)\\
&\leq \sum_{i=1}^ M [\, \lambda^i_n \psi(s^i_n) + (1-\lambda^i_n) \psi(t^i_n) \,]
		\F_n^{\varphi_n}(\Omega_i) \\
	&\hspace{1cm}+ \sup_{i=1,\dots,M } \{\psi(s^i_n),\psi(t^i_n)\}
		\sum_{i=1}^M\F_n^{\varphi_n}(\partial \Omega_i)\\
	&\hspace{1cm}+\sum_{i=1}^M \psi(s^i_n)|\,\F_n^{\varphi_n}(R^i_{m_n})-\lambda^i_n\F^\phi(\Omega_i) \,| \\
	&\hspace{1cm}+\sum_{i=1}^M \psi(s^i_n)|\,\F_n^{\varphi_n}(\Omega_i\setminus R^i_{m_n})
		-(1-\lambda^i_n)\F^\phi(\Omega_i) \,| \\
&\leq \sum_{i=1}^{M} \left(\psi^c(\alpha_i)+\delta_n \right) \F_n^{\varphi_n}(\Omega_i)
	+ \sup_{i=1,\dots,M }\{\psi(s^i_n), \psi(t^i_n)\} \sum_{i=1}^M\F_n^{\varphi_n}(\partial \Omega_i)\\
	&\hspace{1cm}+\sum_{i=1}^M \psi(s^i_n)\,|\,\F_n^{\varphi_n}(R^i_{m_n})-\lambda^i_n\F^\phi(\Omega_i) \,| \\
	&\hspace{1cm}+\sum_{i=1}^M \psi(t^i_n)\,|\,\F_n^{\varphi_n}(\Omega_i\setminus R^i_{m_n})
		-(1-\lambda^i_n)\F^\phi(\Omega_i) \,| \\
&= \int_{\Omega} \psi^c(g) \d \F^{\phi} +\delta_n\F^\phi(\Omega)
	+ \sup_{i=1,\dots,M }\{\psi(s^i_n), \psi(t^i_n)\} \sum_{i=1}^M\F_n^{\varphi_n}(\partial \Omega_i) \\
	&\hspace{1cm}+\sum_{i=1}^M \psi(s^i_n)\,|\,\F_n^{\varphi_n}(R^i_{m_n})-\lambda^i_n\F^\phi(\Omega_i) \,| \\
	&\hspace{1cm}+\sum_{i=1}^M \psi(t^i_n)\,|\,\F_n^{\varphi_n}(\Omega_i\setminus R^i_{m_n})
		-(1-\lambda^i_n)\F^\phi(\Omega_i) \,| \,.
\end{align*}
Thus, taking the limit as $n\to+\infty$ and using \eqref{eq:approxFphi2}, \eqref{eq:propertymn} together with the fact that $\lambda^i_n\to\lambda^i$, we get
\begin{equation}\label{gnamo2}
	\lim_{n\rightarrow +\infty} \int_{\Omega} \psi(h_n) \d \F_n^{\varphi_n} \leq \int_{\Omega} \psi^c(g) \d \F^{\phi}.  
	\end{equation}
This concludes the proof.
\end{proof}


\subsection{Proof of  Theorem \ref{thm:mainthm1}}

The liminf inequality follows from Proposition \ref{propo:lowerbound}.
Let $(\phi,\mu)\in X\times \mathcal{M}^+(\Omega)$, and write $\mu=g\F^{\phi}+\mu^{\perp}$.
Fix $\delta>0$.
By Proposition \ref{thm:density} there exist $\varphi_1 \in X$, and $h_1\in L^1(\Omega;\F^{\varphi_1})$ such that
	\begin{equation}\label{eq:thm1-approx1}
 	\d( (\varphi_1, h_1\F^{\varphi_1}),( \phi ,\mu))
 		+ \left|\int_{\Omega} \psi^{cs} (h_1) \d \F^{\varphi_1}
 			- \int_{\Omega} \psi^{cs} (g)\d \F^{ \phi} -\Theta^{cs} \mu^{\perp}(\Omega)\right|
 			\leq \frac{\delta}{4}\,.
	\end{equation}
Proposition \ref{propo:fullRCVRYabsoluteCouple} yields the existence of $h_2\in L^1(\Omega,\F^{\varphi_1})$ such that $(\varphi_1,h_2\F^{\varphi_1})\in R(\Omega)$ and
	\begin{equation}\label{eq:thm1-approx2}
 	\d( (\varphi_1, h_2\F^{\varphi_1}),(\varphi_1, h_1\F^{\varphi_1}))
 		+ \left|\int_{\Omega} \psi^{cs} (h_1) \d \F^{\varphi_1}
 			- \int_{\Omega} \psi^{cs} (h_2)\d \F^{ \varphi_1}\right|
 			\leq \frac{\delta}{4}\,.
	\end{equation}
Thanks to Proposition \ref{propo:fromConvToSubConv} we can find $\varphi_2\in X$, and $h_3\in L^1(\Omega,\F^{\varphi_2})$ such that $(\varphi_2,h_3\F^{\varphi_2})\in R(\Omega)$ and
	\begin{equation}\label{eq:thm1-approx3}
	\d( (\varphi_2, h_3\F^{\varphi_2}),(\varphi_1, h_2\F^{\varphi_1}) )
		+ \left|\int_{\Omega} \psi^c(h_3)\d \F^{\varphi_2}- 		
		\int_{\Omega} \psi^{cs}(h_2)\d \F^{\varphi_1}\right|\leq \frac{\delta}{4}\,.
	\end{equation}
Finally, let  $(\varphi_{n_{\delta}},h_{n_{\delta}}\F_{n_{\delta}}^{\varphi_{n_{\delta}}})\in X\times \mathcal{M}^+(\Omega)$ be given by Proposition \ref{propo:BorToConv} such that
	\begin{equation}\label{eq:thm1-approx4}
	\d( (\varphi_{n_{\delta}}, h_n\F_{n_{\delta}}^{\varphi_{n_{\delta}}}),(\varphi_2, h_3\F^{\varphi_2}) )\leq\frac{\delta}{4}\,,
	\end{equation}
and
	\begin{equation}\label{eq:thm1-approx5}
	\int_\Omega \psi(h_{n_{\delta}}) \d \F_{n_{\delta}}^{\varphi_{n_{\delta}}}\leq \int_\Omega \psi^{c}(h_3) d\F^{\varphi_3}
		+\frac{\delta}{4}
	\end{equation}
Therefore, from \eqref{eq:thm1-approx1}, \eqref{eq:thm1-approx2}, \eqref{eq:thm1-approx3}, \eqref{eq:thm1-approx4}, and \eqref{eq:thm1-approx5} we get
\[
\d( (\varphi_{n_{\delta}}, h_{n_{\delta}}\F_{n_{\delta}}^{\varphi_{n_{\delta}}}),( \phi ,\mu)) \leq \delta\,,
\]
and
\[
\int_\Omega \psi(h_{n_{\delta}}) \d \F_{n_{\delta}}^{\varphi_{n_{\delta}}} \leq \int_\Omega \psi^{cs}(g) d\F^\phi + \Theta^{cs} \mu^{\perp}(\Omega)+ \delta\,.
\]
We conclude by using Remark \ref{rem:Gamma}.
\qed


\section{Selected applications}\label{sec:appl}
In this section we assume the open set $\Omega\subset \R^d$ to have Lipschitz boundary.

\subsection{Perimeter functional} \label{sec:per}
As a first application of the general theory developed in the previous sections, we consider the perimeter functional.
In the following we will identify the space $X:=BV(\Omega;\{0,1\})$
with the space of sets with finite perimeter in $\Omega$.
We define the functional $\F:X\times \mathcal{A}(\Omega) \to[0,+\infty)$ as
\[
\F(\phi;A):=|D\phi|(A)=\mathcal{P}(\{\phi=1\};A)\,,
\]
where $\mathcal{P}(\{\phi=1\};A)$ denotes the perimeter of the set $\{\phi=1\}$ in $A$.
The following result has been proved in \cite{caroccia2017equilibria} (see \cite[Theorem 2]{caroccia2017equilibria}).

\begin{theorem}\label{thm:CCD}
Let $E\subset\R^d$ be a set of finite perimeter, and $f\in L^1(\partial^* E;[1,+\infty))$.
Then there exists a sequence of smooth bounded sets $\{E_n\}_{n\in\N}\subset\R^d$ with $\ca_{E_n}\to \ca_{E}$ in $L^1$, such that
\[
\lim_{n\to+\infty} \mathcal{P}(E_n;F)=\int_{\partial^* E\cap F} f \d \mathcal{H}^{d-1}\,,
\]
for all Borel sets $F\subset\subset \R^d$ with $\mathcal{P}(E;\partial F)=0$.
\end{theorem}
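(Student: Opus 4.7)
The plan is to produce the approximating sequence by a wriggling construction that locally thickens the perimeter measure of $E$ by the prescribed multiplier $f$. I would proceed in three stages: reduce to piecewise constant $f$, perform a local wriggling on each piece, and then smooth and diagonalize.

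For the reduction, since $f\in L^1(\partial^* E;[1,+\infty))$, I approximate $f$ in $L^1(\mathcal{H}^{d-1}\restr\partial^* E)$ by simple functions of the form $f_k=\sum_{i=1}^{M_k} r_i^k \ca_{B_i^k}$, with $r_i^k\geq 1$ and $\{B_i^k\}_{i=1}^{M_k}$ a Borel partition of $\partial^* E$ into sets with $\mathcal{H}^{d-1}(\partial^* E\cap\partial B_i^k)=0$. Once one constructs, for each $f_k$, a sequence $\{E_n^k\}_n$ satisfying the conclusion with $f_k$ in place of $f$, a diagonal extraction on a metric inducing weak$^*$ convergence of uniformly bounded Radon measures (as in Lemma \ref{lem:metric}) produces the desired sequence for $f$.

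The heart of the argument is the local wriggling on each piece $B_i^k$, with prescribed ratio $r:=r_i^k\geq 1$. By De Giorgi's structure theorem, at $\mathcal{H}^{d-1}$-a.e.\ $x\in\partial^*E$ there is an approximate tangent hyperplane $T_x$ and $E$ is $L^1$-close to the half-space $H_x^-$ on small balls centered at $x$. Using a Vitali-type covering argument I cover $B_i^k$, up to an $\mathcal{H}^{d-1}$-negligible set, by pair-wise disjoint small cylinders $C_\ell$ aligned along $T_{x_\ell}$, inside which $\partial E$ is the graph of a function $g_\ell$ over $T_{x_\ell}$ with small $L^\infty$-oscillation. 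Inside each $C_\ell$ I replace $\partial E$ by the graph of $g_\ell(y)+\eta_\ell\theta_\ell(y)\sin(\omega_\ell\cdot y)$, where $\theta_\ell$ is a smooth cutoff equal to $1$ on a slightly smaller concentric cylinder and vanishing near $\partial C_\ell$. A direct length/area computation, via the surface-area formula for graphs, shows that tuning $\eta_\ell,\omega_\ell$ so that $\eta_\ell\omega_\ell$ approaches $\sqrt{r^2-1}$ while $\eta_\ell\to 0$ yields a graph of surface area arbitrarily close to $r\,\mathcal{H}^{d-1}(T_{x_\ell}\cap C_\ell)$. Since $\eta_\ell\to 0$, the resulting set $E_n^k$ converges to $E$ in $L^1$, and smoothing $E_n^k$ by mollifying its characteristic function and taking an appropriate super-level set turns it into a smooth bounded set without affecting the $L^1$ limit nor the perimeter measure in the limit.

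The main obstacle is the global measure-level control: one must verify that for every Borel set $F\subset\subset\R^d$ with $\mathcal{P}(E;\partial F)=0$, the perimeters $\mathcal{P}(E_n^k;F)$ converge to $\sum_i r_i^k\,\mathcal{H}^{d-1}(B_i^k\cap F)$, with no spurious mass appearing either on $\partial F$ or on the interfaces between the wriggled cylinders and the untouched part of $\partial E$. The cutoff $\theta_\ell$ confines the perturbation well inside each $C_\ell$, so the additional perimeter contribution in the transition layer is controlled by the $(d-1)$-dimensional measure of a thin collar, which becomes negligible as the diameter of the cover tends to zero. The hypothesis $\mathcal{P}(E;\partial F)=0$ ensures that cylinders straddling $\partial F$ carry vanishingly small $\mathcal{H}^{d-1}$-mass, so the limit measure splits cleanly between $F$ and its complement. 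Running a further diagonal argument across the simple approximations $f_k$ of $f$ then yields the claimed sequence.
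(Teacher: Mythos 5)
Your overall architecture (reduce $f$ to piecewise constant multipliers, wriggle locally to inflate the area by the factor $r$, smooth, diagonalize) is the right one and matches the wriggling strategy of \cite[Theorem 2]{caroccia2017equilibria} and of the analogous total variation result in this paper. But there is a genuine gap at the heart of your construction: you assume that, after a Vitali covering of $\mathcal{H}^{d-1}$-almost all of $\partial^* E$, the boundary $\partial E$ inside each small cylinder $C_\ell$ \emph{is} the graph of a function $g_\ell$ with small oscillation, and you then ``replace $\partial E$ by the graph of $g_\ell+\eta_\ell\theta_\ell\sin(\omega_\ell\cdot y)$''. For a general set of finite perimeter this is false and the replacement is not even well defined: De Giorgi's theorem gives rectifiability of $\partial^* E$ and blow-up convergence to half-spaces at $\mathcal{H}^{d-1}$-a.e.\ point, but at any fixed scale neither $\partial E$ nor $\partial^* E$ need be a graph in a cylinder (think of $E$ a countable union of tiny balls with centers dense in space and summable perimeters: every cylinder around a reduced boundary point contains infinitely many extra boundary pieces). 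To modify the \emph{set} so that its new perimeter is the area of a perturbed graph you need a genuine subgraph structure, which your covering does not provide. The standard repair, and the route taken in the paper's analogue for the weighted total variation (Lemma \ref{lem:plscONregf} built on Lemma \ref{lem:techTV} and Lemma \ref{Lem:Fpiece-wiseconstant}), is to insert a preliminary approximation step: first replace $E$ by smooth bounded sets $E_m$ with $\ca_{E_m}\to\ca_E$ in $L^1$ and $\mathcal{P}(E_m;\cdot)\rightharpoonup^* \mathcal{P}(E;\cdot)$ (mollify $\ca_E$ and take suitable superlevel sets), perform the wriggling on the smooth boundaries, where the local graph description is legitimate (or, equivalently, compose with piecewise $C^1$ deformation maps as in Lemma \ref{lem:techTV}), and only then diagonalize over the smooth approximation, the piecewise constant multipliers and the oscillation parameter.

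A second, more minor, slip: tuning $\eta_\ell\omega_\ell\to\sqrt{r^2-1}$ does \emph{not} produce the factor $r$, because the limiting area density is the average of $\sqrt{1+c^2\cos^2\theta}$ over a period, which is strictly smaller than $\sqrt{1+c^2}$. This is harmless but should be fixed as in Lemma \ref{lem:techTV}: either use a sawtooth profile of constant slope $b$, or note that the resulting multiplicative factor is a continuous function of the amplitude-frequency product, equal to $1$ at $0$ and tending to $+\infty$, and choose the parameter by surjectivity rather than by an explicit formula. With the smooth pre-approximation inserted and the calibration argued by continuity, your scheme becomes essentially the proof of the cited result.
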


Using the above result, it is possible to obtain the following.

\begin{proposition}
The functional $\F$ is a purely lower semi-continuous admissible energy.
\end{proposition}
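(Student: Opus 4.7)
The plan is to verify the four axioms of Definition \ref{def:Admissible} in turn, relying on classical results for the perimeter functional plus Theorem \ref{thm:CCD} for the key property (Ad4).

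First I would observe that (Ad1) and (Ad2) are standard: the lower semicontinuity of the total variation (equivalently, of the perimeter) under $L^1$-convergence is classical, while $|D\phi|$ is by definition a finite Radon measure on $\Omega$, so its restriction to $\Oin$ coincides with $\F^\phi$. No real work is needed here beyond citing the appropriate references (\emph{e.g.} \cite{AFP,Maggi}).

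For (Ad3), I would argue geometrically. If $\phi=\ca_E\in X$ satisfies $\F(\phi;A)=\mathcal{P}(E;A)=0$, then up to a Lebesgue-null set $E\cap A$ is either empty or coincides with $A$, so $\phi$ is $\mathcal{L}^d$-a.e.\ constant on $A$. Given $U\cc A$ and $\e>0$, pick $x_0\in U$ and $r>0$ small enough that $\overline{B_r(x_0)}\subset U$ and $d\omega_d r^{d-1}<\e$, $\omega_d r^d<\e$. Then define
\[
\overline{\phi}:=\begin{cases} \ca_{B_r(x_0)} & \text{on } U,\\ \phi & \text{on } \Omega\setminus U\end{cases}
\]
if $\phi\equiv 0$ on $A$, and symmetrically $\overline\phi:=1-\ca_{B_r(x_0)}$ on $U$ if $\phi\equiv 1$ on $A$. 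Then $\overline{\phi}=\phi$ on $\Omega\setminus U$, $\|\overline\phi-\phi\|_{L^1}=\omega_d r^d<\e$, and $\F^{\overline\phi}$ is concentrated on $\partial B_r(x_0)\subset U$, which is disjoint from $\partial U$; thus $\F^{\overline\phi}(\partial U)=0$ and $0<\F^{\overline\phi}(A)=\F^{\overline\phi}(U)=d\omega_d r^{d-1}<\e$.

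For (Ad4), which is the heart of the proposition, I would invoke Theorem \ref{thm:CCD} directly. Given $\phi=\ca_E\in X$ and $f\in L^1(\Omega;\F^\phi)$ with $f\geq 1$ $\F^\phi$-a.e., the measure $\F^\phi=\mathcal{H}^{d-1}\restr\partial^* E$ is supported in $\Omega$, so $f$ can be regarded as an element of $L^1(\partial^*E;[1,+\infty))$. Theorem \ref{thm:CCD} produces smooth sets $E_n$ with $\ca_{E_n}\to\ca_E$ in $L^1$ and
\[
\lim_{n\to+\infty}\mathcal{P}(E_n;F)=\int_{\partial^*E\cap F}f\d\mathcal{H}^{d-1}
\]
for every bounded Borel $F\cc\R^d$ with $\mathcal{P}(E;\partial F)=0$. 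Applied to $F\cc\Omega$ with $\F^\phi(\partial F)=0$, together with the equivalent characterization of weak* convergence of uniformly bounded Radon measures (recall that $\mathcal{P}(E_n;\Omega)\to\int_\Omega f\d\F^\phi<+\infty$), this yields $\F^{\phi_n}\wt f\F^\phi$ with $\phi_n:=\ca_{E_n\cap\Omega}$. The only subtle point is the behavior near $\partial\Omega$: one has to verify that intersecting $E_n$ with $\Omega$ does not create spurious perimeter, which follows because $E\subset\Omega$ (so the $E_n$ accumulate the boundary near $\partial^*E\subset\overline\Omega$) and $\Omega$ has Lipschitz boundary, allowing a standard truncation argument near $\partial\Omega$ to keep $\F^{\phi_n}$ well-behaved there.

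The main obstacle is really just packaging (Ad4), since it reduces entirely to Theorem \ref{thm:CCD}; the genuinely hard wriggling argument has already been carried out in \cite{caroccia2017equilibria}. The other axioms are routine. I therefore expect the proof to be short, essentially a bookkeeping verification.
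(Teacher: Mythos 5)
Your proposal is correct and follows essentially the same route as the paper: (Ad1)–(Ad2) are dismissed as standard, (Ad3) is handled by inserting (or removing) a small ball $B_r(x)\cc U$ exactly as in the paper's construction $\overline\phi=\ca_E+\ca_{B_r(x)}$, and (Ad4) is reduced entirely to Theorem \ref{thm:CCD}. The only difference is cosmetic: the paper invokes Theorem \ref{thm:CCD} ``localized in $\Omega$'' via the locality of the wriggling construction, whereas you apply the global statement to the zero-extension of $E$ (which has finite perimeter in $\R^d$ since $\Omega$ is bounded Lipschitz, with $f$ extended by $1$ on $\partial^*E\cap\partial\Omega$) and then restrict; note that your worry about ``spurious perimeter'' of $E_n\cap\Omega$ is actually a non-issue, since by locality $\mathcal{P}(E_n\cap\Omega;F)=\mathcal{P}(E_n;F)$ for every Borel $F\cc\Omega$, so no truncation argument near $\partial\Omega$ is needed.
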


\begin{proof}
In order to show that the functional $\F$ is an admissible energy, we just need to prove property (Ad3), since the others are trivially satisfied.
Let $E\subset\Omega$ be a set of finite perimeter such that $|D\ca_E|(A)=0$ for some open set $A\subset\Omega$. Then $\ca_E$ is constant on $A$. Assume $\phi:=\ca_E=0$ on $A$. The other case can be treated similarly.
Let $U\subset\subset A$ be an open subset, and $\e>0$.
Pick $B_R(x)\subset\subset U$ and
for $r\in(0,R)$ set $\overline{\phi}:= \ca_E+\ca_{B_r(x)}$.
Then $\F(\overline{\phi};\partial U)=0$, $\|\overline{\phi}-\phi\|_{L^1}\leq \omega_d r^d$, and
	\[
	0 < \F(\overline{\phi};A) = \F(\overline{\phi};U) \leq d\omega_d r^{d-1}\,.
	\]
Taking
\[
r<\left(\frac{\varepsilon}{d\omega_d}\right)^{\frac{1}{d-1}}
\]
we get the desired result.

Finally, the fact that the energy $\F$ is purely lower semi-continuous follows by Theorem \ref{thm:CCD} localized in $\Omega$. Indeed, the wriggling procedure used in the proof of Theorem \ref{thm:CCD} is a local construction.
\end{proof}

\subsubsection{The Modica-Mortola approximation of the perimeter}
We now consider, for $\e>0$, the Modica-Mortola functional
$\F_\e: L^1(\Omega)\times\Oin\to[0,+\infty]$  defined as
\[
\F_\e(\phi;A):=\int_A \left[\, \frac{1}{\e}W(\phi) + \e|\nabla \phi|^2 \,\right] \d x\,,
\]
where $W\in C^0(\R)$ is non negative potential with at least linear growth at infinity, and such that $\{W=0\}=\{0,1\}$.
We report here the classical result by Modica (see \cite{modica1987gradient, modica1977limite}).

\begin{theorem}\label{thm:MM}
We have that $\F_\e\stackrel{\Gamma-L^1}{\rightarrow} \sigma_W\F$, where
\[
\sigma_W:=2\int_0^1 \sqrt{W(t)} \d t\,.
\]
Moreover, if $\{\phi_n\}_{n\in\N}\subset L^1(\Omega)$ is such that
\[
\sup_{n\in\N} \F_{\e_n}(\phi_n;\Omega)<+\infty\,,
\]
for some $\e_n\to0$, then, up to a subsequence (not relabeled), $\phi_n\to\phi$ in $L^1$, where $\phi\in BV(\Omega;\{0,1\})$
\end{theorem}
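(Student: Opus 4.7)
The plan is to prove the three standard ingredients — compactness, liminf, and limsup — by the classical Modica--Mortola argument. The starting point is the AM--GM inequality
\[
\frac{1}{\e}W(\phi) + \e|\nabla\phi|^2 \geq 2\sqrt{W(\phi)}\,|\nabla\phi| = 2|\nabla(\Phi\circ\phi)|,
\]
where $\Phi(t):=\int_0^t \sqrt{W(s)}\,\d s$, so that $\F_\e(\phi;\Omega)\geq 2|D(\Phi\circ\phi)|(\Omega)$. For the compactness statement, a sequence $\{\phi_n\}$ with $\sup_n \F_{\e_n}(\phi_n;\Omega)<+\infty$ satisfies $\int_\Omega W(\phi_n)\,\d x\leq \e_n \F_{\e_n}(\phi_n;\Omega)\to 0$, and combined with the at-least-linear growth of $W$ this yields a uniform $L^1$ bound on $\phi_n$. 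The BV bound on $\Phi\circ\phi_n$ provides an $L^1$-convergent subsequence $\Phi\circ\phi_n\to v$, and since $W(\phi_n)\to 0$ in $L^1$ forces any a.e.\ limit of $\phi_n$ to lie in $\{0,1\}$, strict monotonicity of $\Phi$ on $[0,1]$ lets me invert on the wells to conclude $\phi_n\to\phi$ in $L^1$ with $\phi\in BV(\Omega;\{0,1\})$. The liminf inequality then follows from $L^1$-continuity of $\Phi$ (giving $\Phi\circ\phi_n\to\Phi\circ\phi$ in $L^1$ whenever $\phi_n\to\phi$ in $L^1$ with bounded energy) together with lower semicontinuity of the total variation and the identity $2\Phi(1)=\sigma_W$:
\[
\liminf_n \F_{\e_n}(\phi_n;\Omega) \geq 2|D(\Phi\circ\phi)|(\Omega) = \sigma_W\,\mathcal{P}(\{\phi=1\};\Omega).
\]

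For the limsup, I would first build an explicit recovery sequence for $\phi=\ca_E$ with $\partial E\in C^2$ compactly contained in $\Omega$, using the one-dimensional optimal profile. Let $d_E$ be the signed distance function to $\partial E$, and let $\gamma$ solve the ODE $\gamma'=\sqrt{W(\gamma)}$ with $\gamma(0)=\tfrac12$, suitably truncated at $\pm L_\eta$ so that $\gamma(-L_\eta)=0$, $\gamma(L_\eta)=1$, with total energy within $\eta$ of the ideal. Define $\phi_\e(x):=\gamma(-d_E(x)/\e)$ on a tubular neighborhood of $\partial E$ and extend by $0$ and $1$ elsewhere. On the optimal profile AM--GM becomes equality, and by the co-area formula and the change of variable $s=-d_E(x)/\e$,
\[
\F_\e(\phi_\e;\Omega) \longrightarrow \mathcal{H}^{d-1}(\partial E)\int_{-L_\eta}^{L_\eta}\! 2\gamma'(s)^2\,\d s = 2\,\mathcal{H}^{d-1}(\partial E)\int_0^1\sqrt{W(t)}\,\d t + O(\eta),
\]
and sending $\eta\to 0$ recovers $\sigma_W\F(\phi;\Omega)$. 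For a general set of finite perimeter $E\subset\Omega$, standard smoothing produces $\{\ca_{E_n}\}\to\ca_E$ in $L^1$ with $\mathcal{P}(E_n;\Omega)\to\mathcal{P}(E;\Omega)$, and the diagonal-extraction principle of Remark \ref{rem:Gamma} delivers the final recovery sequence.

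The main technical obstacle is the truncation of the optimal profile: since $W$ typically vanishes quadratically at the wells $\{0,1\}$, the ODE $\gamma'=\sqrt{W(\gamma)}$ reaches them only at $\pm\infty$, so one must cut $\gamma$ at finite distance and glue it continuously to the constants $0$ and $1$, controlling simultaneously the $L^1$ distance from $\ca_E$ and the energy excess by the single parameter $\eta\to 0$. All remaining steps are consequences of AM--GM, the co-area formula, and standard BV lower semicontinuity, so this gluing estimate is the only nontrivial piece of the argument.
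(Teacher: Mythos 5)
The paper does not prove this statement at all: Theorem \ref{thm:MM} is quoted as the classical result of Modica and Modica--Mortola, with references in place of a proof. Your outline is the standard classical argument (AM--GM and the coarea formula for compactness and the liminf inequality, the optimal one-dimensional profile plus a density argument for the limsup), and the compactness and liminf parts are fine up to standard details you implicitly use or explicitly flag (truncating $\phi_n$ to $[0,1]$ so that $\Phi\circ\phi_n$ is bounded in $BV$ and not just in total variation, equi-integrability from the growth of $W$, and the gluing of the truncated profile).

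There is, however, a genuine gap in your limsup reduction. You build recovery sequences only for $\phi=\ca_E$ with $\partial E$ of class $C^2$ and compactly contained in $\Omega$, and then claim that a general set of finite perimeter $E\subset\Omega$ can be approximated by such sets with $\ca_{E_n}\to\ca_E$ in $L^1(\Omega)$ and $\mathcal{P}(E_n;\Omega)\to\mathcal{P}(E;\Omega)$. This is impossible whenever $\mathcal{H}^{d-1}(\partial^*E\cap\partial\Omega)>0$, and in particular for $E=\Omega$: if $\partial E_n\cc\Omega$ and $E_n\subset\Omega$, then $\overline{E_n}\subset\Omega$, so extending by zero outside $\Omega$ gives $\mathcal{P}(E_n;\Omega)=\mathcal{P}(E_n;\R^d)$, and lower semicontinuity of the perimeter in $\R^d$ yields $\liminf_n\mathcal{P}(E_n;\Omega)\geq\mathcal{P}(E;\R^d)=\mathcal{P}(E;\Omega)+\mathcal{H}^{d-1}(\partial^*E\cap\partial\Omega)>\mathcal{P}(E;\Omega)$ (for $E=\Omega$ the isoperimetric inequality already forbids $\mathcal{P}(E_n;\Omega)\to0$). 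Since the $\Gamma$-limit here is the \emph{relative} perimeter $|D\phi|(\Omega)$, recovery sequences must be produced for sets whose boundary reaches $\partial\Omega$, and your diagonal argument would only give an upper bound with the strictly larger limit of $\mathcal{P}(E_n;\Omega)$. The classical fix, which uses that $\Omega$ is bounded with Lipschitz boundary, is to approximate $E$ by smooth sets $E_n\subset\R^d$ that are \emph{not} compactly contained in $\Omega$, chosen so that $\ca_{E_n}\to\ca_E$ in $L^1(\Omega)$, $\mathcal{H}^{d-1}(\partial E_n\cap\partial\Omega)=0$ and $\mathcal{H}^{d-1}(\partial E_n\cap\Omega)\to\mathcal{P}(E;\Omega)$, and then to run your signed-distance/optimal-profile construction for such $E_n$, restricting the coarea computation to $\Omega$; the transversality condition $\mathcal{H}^{d-1}(\partial E_n\cap\partial\Omega)=0$ is exactly what prevents spurious surface energy from concentrating on $\partial\Omega$. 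With this modification your outline becomes the standard proof of the cited result.
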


A careful analysis of the proof of the above results yields the following.

\begin{proposition}\label{prop:MMgood}
Let $\{\e_n\}_{n\in\N}$ be such that $\e_n\to0$, and set $\F_n:=\F_{\e_n}$ for $n\in\N$.
Then the sequence $\{\F_n\}_{n\in\N}$ is a good approximating sequence for $\F$.
\end{proposition}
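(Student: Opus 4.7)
The plan is to verify the three conditions of Definition \ref{def:goodApproximating}, implicitly understood up to the normalizing constant $\sigma_W$ appearing in Theorem \ref{thm:MM} (i.e.\ the target measure is $\sigma_W|D\phi|$). Condition (GA2) is immediate: for every $\phi\in L^1(\Omega)$ with $\F_{\e_n}(\phi;\Omega)<+\infty$, the density $\e_n^{-1}W(\phi)+\e_n|\nabla\phi|^2$ is a nonnegative $L^1$ function with respect to Lebesgue measure, so the set function $A\mapsto \F_{\e_n}(\phi;A)$ is the restriction to $\Oin$ of a finite Radon measure on $\Omega$.

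Condition (GA1) is the localized version of the Modica--Mortola liminf inequality, which is entirely classical. For any $\phi_n\to\phi$ in $L^1(\Omega)$ and any open $A\subset\Omega$, one has $\sigma_W|D\phi|(A)\leq \liminf_{n\to+\infty}\F_{\e_n}(\phi_n;A)$ with the convention that $|D\phi|(A)=+\infty$ if $\phi|_A\notin BV(A;\{0,1\})$; the locality is guaranteed by the blow-up/slicing argument in the standard proof, which can be applied on each open $A$ independently.

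For condition (GA3), fix $\phi=\ca_E\in X$ with $E\subset\Omega$ of finite perimeter. I would use the classical Modica--Mortola recovery sequence obtained by interpolating across $\partial^* E$ via the signed distance function and the optimal one-dimensional profile $\gamma_{\e_n}$ solving $\gamma_{\e_n}'=\sqrt{W(\gamma_{\e_n})/\e_n}$. This provides smooth functions $\phi_n\to\ca_E$ in $L^1(\Omega)$, each $\F_{\e_n}^{\phi_n}$ is absolutely continuous with respect to $\mathcal{L}^d$ and hence non atomic, and the classical limsup computation yields the convergence of the total energies
\[
\F_{\e_n}^{\phi_n}(\Omega)\longrightarrow \sigma_W|D\ca_E|(\Omega).
\]
To upgrade this to the weak$^*$ convergence $\F_{\e_n}^{\phi_n}\wt \sigma_W|D\ca_E|$, I apply (GA1) both to an open set $A\subset\Omega$ with $|D\ca_E|(\partial A)=0$ and to its complement $\Omega\setminus\overline{A}$. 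Summing the two liminf inequalities and combining with the convergence of the total masses forces
\[
\limsup_{n\to+\infty}\F_{\e_n}^{\phi_n}(A)\leq \sigma_W|D\ca_E|(A),
\]
so $\F_{\e_n}^{\phi_n}(A)\to \sigma_W|D\ca_E|(A)$ on the class of open sets characterizing weak$^*$ convergence of non-negative Radon measures.

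The only mild obstacle is precisely this upgrade from global to local mass convergence, but it is handled cleanly by the complementary liminf trick above, so no separate ad hoc localized recovery construction is needed. All remaining ingredients are classical consequences of Theorem \ref{thm:MM}.
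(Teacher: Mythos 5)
Your proposal is correct, and for (GA1)--(GA2) it coincides with the paper, which dismisses them as immediate; the interesting difference is in how you obtain (GA3). The paper's proof asserts that the classical Modica--Mortola construction already yields \emph{local} energy convergence, i.e.\ that Theorem \ref{thm:MM} holds on every open $U\subset\Omega$ with Lipschitz boundary and $|D\phi|(\partial U)=0$, and then concludes via Lemma \ref{lem:weakconvopen}; this requires re-inspecting the recovery construction to check its locality (hence the phrase ``a careful analysis of the proof''). You instead use only the \emph{global} mass convergence $\F_{\e_n}^{\phi_n}(\Omega)\to\sigma_W|D\ca_E|(\Omega)$ of the standard recovery sequence and upgrade it to convergence on every open $A$ with $\sigma_W|D\ca_E|(\partial A)=0$ by the complementary-liminf trick: (GA1) on $A$ and on $\Omega\setminus\overline{A}$, together with $\F_{\e_n}^{\phi_n}(A)+\F_{\e_n}^{\phi_n}(\Omega\setminus\overline{A})\le\F_{\e_n}^{\phi_n}(\Omega)$, forces $\limsup_n\F_{\e_n}^{\phi_n}(A)\le\sigma_W|D\ca_E|(A)$, and weak$^*$ convergence then follows from the same characterization the paper uses (your class of open sets contains the Lipschitz ones of Lemma \ref{lem:weakconvopen}, so you may simply invoke that lemma rather than leaving the last step implicit). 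This buys robustness: you never need local information from the construction, so the argument survives the usual reduction of a general finite-perimeter set to smooth sets followed by a diagonal extraction, for which only the global mass convergence is guaranteed; the paper's route is shorter on paper but leans on unverified locality of the classical proof. Your remark about the normalization $\sigma_W$ and the non-atomicity of $\F_{\e_n}^{\phi_n}$ (absolute continuity with respect to $\mathcal{L}^d$) matches what is needed; like the paper, you leave untouched the cosmetic mismatch in (GA3) that the recovery functions are not themselves in $X=BV(\Omega;\{0,1\})$, which is an issue of the definition rather than of either proof.
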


\begin{proof}
We just have to prove property (GA3), being the others trivially satisfied.
The statement of Theorem \ref{thm:MM} holds for every open set $U\subset\Omega$ with Lipschitz boundary, and such that $|D\phi| (\partial U) =0$.
Therefore, by using Lemma \ref{lem:weakconvopen}, we get (GA3).
\end{proof}

Proposition \ref{prop:MMgood} allows us to use the abstract results proved in the previous section. In particular, we obtain the following.

\begin{proposition}\label{prop:resultperimeter}
Let $\psi:[0,+\infty)\to(0,+\infty)$ be a Borel function with $\inf \psi>0$.
For $\e>0$, consider the $\F_{\e}$-relative energy  
\[
\mathcal{G}_\e(\phi,\mu):=\int_\Omega\left[\, \frac{1}{\e}W(\phi) + \e|\nabla\phi|^2 \,\right] \psi(u) \d x\,,
\]
if $\phi\in H^1(\Omega)$, $\mu\in \mathcal{M}^+(\Omega)$ such that $\mu= u ( \frac{1}{\e}W(\phi) + \e|\nabla\phi|^2 )\L^d$, and $+\infty$ otherwise in $L^1(\Omega)\times \mathcal{M}^+(\Omega)$.
Then $\mathcal{G}_\e\stackrel{\Gamma}{\rightarrow}\mathcal{G}$ with respect to the $L^1 \times w^*$ topology, where
\[
\mathcal{G}(E,\mu):=\sigma_W\int_{\partial^* E} \psi^{cs}(u) \d \mathcal{H}^{d-1}
	+\Theta^{cs}\mu^{\perp}(\Omega)\,,
\]
is defined on any $E\subset\Omega$ set of finite perimeter and for the Radon-Nidodym decomposition of $\mu = u \left(\sigma_W \mathcal{H}^{d-1}\restr\partial^*E\right)+\mu^{\perp}$ with respect to $\mathcal{H}^{d-1}\restr\partial^*E$.
\end{proposition}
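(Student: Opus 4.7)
The plan is to obtain this $\Gamma$-convergence result as a direct corollary of the abstract machinery: once we recognize that $\mathcal{G}_\e$ is precisely the $\F_\e$-relative energy associated to the Modica--Mortola functionals, and that the underlying limit energy (the scaled perimeter $\sigma_W \F$) is purely lower semi-continuous and is well-approximated by $\{\F_\e\}$ in the sense of Definition \ref{def:goodApproximating}, Theorem \ref{thm:mainthm1} delivers the $\Gamma$-limit immediately.

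First I would verify the bookkeeping, namely that $\mathcal{G}_\e$ coincides with $\mathcal{G}^{\F_\e}$. This is a direct computation: if $\mu = u\F_\e^\phi$, then by definition $\d\mu/\d\F_\e^\phi = u$ on the support of $\F_\e^\phi = \bigl[\frac{1}{\e}W(\phi)+\e|\nabla\phi|^2\bigr]\L^d$, and so
\[
\mathcal{G}^{\F_\e}(\phi,\mu) = \int_\Omega \psi(u) \d \F_\e^\phi = \int_\Omega \psi(u)\Bigl[\tfrac{1}{\e}W(\phi)+\e|\nabla\phi|^2\Bigr] \d x = \mathcal{G}_\e(\phi,\mu).
\]
Otherwise (when $\mu$ is not absolutely continuous with respect to $\F_\e^\phi$) both sides are $+\infty$ by the convention in the definition of $\mathcal{G}^{\F}$ and of $\mathcal{G}_\e$.

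Next I would identify the underlying admissible energy as $\sigma_W \F$, where $\F(\phi;A)=|D\phi|(A)$ on $X = BV(\Omega;\{0,1\})$. The earlier proposition in this subsection shows that $\F$ is a purely lower semi-continuous admissible energy; multiplying by the positive constant $\sigma_W$ preserves all four conditions of Definition \ref{def:Admissible} (locality, lower semi-continuity, the non-degeneracy property (Ad3), and the wriggling property (Ad4), the latter because the sequence produced by Theorem \ref{thm:CCD} automatically serves for $\sigma_W\F$ by a rescaling of the target density $f \mapsto f/\sigma_W$, or equivalently by multiplying the local measure identities by $\sigma_W$). Then Proposition \ref{prop:MMgood} (reinterpreted as giving a good approximating sequence for $\sigma_W \F$, since $\F_\e \stackrel{\Gamma\text{-}L^1}{\to} \sigma_W \F$ locally on Lipschitz open sets whose boundary is $|D\phi|$-null, by Theorem \ref{thm:MM} combined with Lemma \ref{lem:weakconvopen}) yields $\{\F_\e\}_{\e>0}\in\ga(\sigma_W \F)$.

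With these ingredients in place, Theorem \ref{thm:mainthm1} applied to $\sigma_W \F$ and the good approximating sequence $\{\F_\e\}$ gives $\mathcal{G}^{\F_\e} \stackrel{\Gamma}{\rightarrow} \mathcal{G}^{\sigma_W \F}_{\mathrm{lsc}}$ in the $L^1 \times w^*$ topology. It only remains to rewrite the right-hand side in the claimed form: for $\phi = \ca_E$ we have $\sigma_W \F^\phi = \sigma_W\,\H^{d-1}\restr\partial^* E$, hence the Radon--Nikodym decomposition of $\mu$ with respect to $\sigma_W \F^\phi$ coincides with the one stated in the proposition ($\d\mu/\d(\sigma_W\F^\phi) = u$), and the integral transforms as
\[
\int_\Omega \psi^{cs}\!\left(\tfrac{\d\mu}{\d(\sigma_W\F^\phi)}\right)\d(\sigma_W\F^\phi) = \sigma_W \int_{\partial^* E} \psi^{cs}(u)\d\H^{d-1}.
\]
The singular contribution $\Theta^{cs}\mu^\perp(\Omega)$ is unchanged. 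I do not expect any serious obstacle here; the only delicate point is confirming that the abstract hypotheses apply to the scaled energy $\sigma_W \F$ (rather than $\F$ itself), which is purely a matter of tracking constants through Theorem \ref{thm:CCD} and Theorem \ref{thm:MM}.
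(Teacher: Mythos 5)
Your proposal is correct and follows essentially the same route as the paper, which likewise treats $\mathcal{G}_\e$ as the $\F_\e$-relative energy and deduces the statement directly from Theorem \ref{thm:mainthm1} via the admissibility of the perimeter functional and Proposition \ref{prop:MMgood}. Your extra care in tracking the constant $\sigma_W$ (viewing the limit energy as $\sigma_W\F$ and rescaling the wriggling density $f\mapsto f/\sigma_W$) is a welcome clarification of a point the paper leaves implicit, but it does not change the argument.
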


\begin{remark}
The functional $\mathcal{G}_\e$ has been used in \cite{ratz2006diffuse} as a phase field diffuse approximation for a model describing the evolution of interfaces in epitaxial growth with adatoms.
The authors worked with the special case $\psi(t):=1+\frac{t^2}{2}$.
In the same paper, it has been claimed that the solutions of the gradient flow of the phase field model converge to the solution of the sharp interface one.
This claim was supported by formal matching asymptotics.
It is worth to notice that the evolution equations for the sharp model do not account for the recession part and neither for the convex sub-additive envelope of $\psi$.

Proposition \ref{prop:resultperimeter} answer the question posed in the introduction.
\end{remark}


\subsection{Total variation functional}\label{sec:total}

In this section we generalize the result of Section \ref{sec:per} by considering the total variation functional defined over the whole class of functions of bounded variation.
Let
$\rho\in C^1(\Omega)\cap C^0(\overline{\Omega})$ such that 	
\[
0<\min_{\overline{\Omega}} \rho \leq\max_{\overline{\Omega}} \rho <+\infty\,,
\] 
and consider the energy $\F: BV(\Omega)\times \Oin \rightarrow [0,+\infty)$:
	\[
	\F(\phi;A):=\int_A \rho^2\,\d|D\phi|.
	\]

\begin{proposition}\label{prop:TVCLLC}
$\F\in \Cllc$.
\end{proposition}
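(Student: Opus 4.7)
The proof amounts to verifying the four properties (Ad1)--(Ad4) of Definition \ref{def:Admissible}. Property (Ad1) follows from Reshetnyak's lower semi-continuity theorem: since $\phi_n\to\phi$ in $L^1(\Omega)$ forces $D\phi_n\weakstar D\phi$ as vector-valued Radon measures, and since $(x,v)\mapsto \rho^2(x)|v|$ is continuous, convex and positively $1$-homogeneous in $v$, we obtain $\F(\phi;A)\leq \liminf_n \F(\phi_n;A)$. Property (Ad2) is immediate, since $\F^\phi=\rho^2|D\phi|$ is a finite Radon measure on $\Omega$ for every $\phi\in BV(\Omega)$.

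For (Ad3), suppose $\F(\phi;A)=0$ for some open $A\subset\Omega$; since $\min_{\overline\Omega}\rho>0$, this forces $|D\phi|(A)=0$, so $\phi$ is locally constant on $A$. Given $U\cc A$ and $\e>0$, pick a ball $B_r(x)\cc U$ and set $\overline\phi:=\phi+\ca_{B_r(x)}$. Then $D\overline\phi$ agrees with $D\phi$ outside $B_r(x)$ and produces a unit jump on $\partial B_r(x)$, so $\F^{\overline\phi}(\partial U)=0$ (because $\partial U\cap \partial B_r(x)=\emptyset$ and $D\phi=0$ on $A\supset\partial U$), $\|\overline\phi-\phi\|_{L^1}=\omega_d r^d$, and
\[
0<\F^{\overline\phi}(A)=\F^{\overline\phi}(U)\leq \left(\max_{\overline\Omega}\rho^2\right)d\omega_d\,r^{d-1}.
\]
Choosing $r$ small enough makes all three bounds less than $\e$.

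The core of the proposition is (Ad4): given $\phi\in BV(\Omega)$ and $f\in L^1(\Omega,\F^\phi)$ with $f\geq 1$ $\F^\phi$-a.e., one must construct $\phi_n\to\phi$ in $L^1$ with $\F^{\phi_n}\weakstar f\F^\phi$. The plan is to appeal to the wriggling result for the weighted total variation, Proposition \ref{propo:wrigTV}, which is designed precisely as the BV analogue of Theorem \ref{thm:CCD} with weight $\rho^2$. Note that one cannot reduce to an unweighted statement: the target density $f\rho^2$ is not bounded below by $1$ in general, so a genuinely weighted wriggling is required rather than the coarea-free perimeter case.

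The main obstacle is therefore concealed in the construction underlying Proposition \ref{propo:wrigTV}. Whereas Theorem \ref{thm:CCD} deforms the reduced boundary of a single set of finite perimeter in small oscillations to inflate its perimeter locally by any $f\geq 1$, for a general BV function one must treat all super-level sets coherently via the coarea formula
\[
|D\phi|(E)=\int_{\R}\mathcal{P}(\{\phi>t\};E)\,\d t,
\]
producing a \emph{single} approximating function $\phi_n$ whose weighted total variation $\rho^2|D\phi_n|$ converges weakly* to $f\rho^2|D\phi|$ while keeping $\|\phi_n-\phi\|_{L^1}$ small. The positive continuous weight $\rho^2$ must additionally be accommodated by a local rescaling of the wriggling amplitude so that the intended factor $f$ is realised against the measure $\rho^2|D\phi|$ rather than against $|D\phi|$.
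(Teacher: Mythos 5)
Your verification of (Ad1)--(Ad3) is essentially the paper's own argument (the ball construction for (Ad3) is exactly the one used there, and (Ad1) is fine once you add the standard remark that along a sequence with $\liminf_n\F(\phi_n;A)<+\infty$ the total variations $|D\phi_n|(A)$ are bounded, so that $D\phi_n\restr A$ does converge weakly* up to subsequences). The genuine gap is in (Ad4). The obstruction you raise --- that ``one cannot reduce to an unweighted statement because the target density $f\rho^2$ is not bounded below by $1$'' --- is spurious, and the fix you propose (a wriggling whose amplitude is locally rescaled by the weight) is neither what Proposition \ref{propo:wrigTV} asserts nor something you construct. Proposition \ref{propo:wrigTV} is an \emph{unweighted} statement: for $f\in L^1(\Omega,|D\phi|)$ with $f\geq 1$ it produces $\phi_n\to\phi$ in $L^1$ with $|D\phi_n|\wt f|D\phi|$. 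The multiplier to feed into it is $f$ itself, not $f\rho^2$: since $0<\min_{\overline{\Omega}}\rho\leq\max_{\overline{\Omega}}\rho<+\infty$, the measures $\F^{\phi}=\rho^2|D\phi|$ and $|D\phi|$ are mutually absolutely continuous with densities bounded above and below, so ``$f\in L^1(\Omega,\F^{\phi})$, $f\geq1$ $\F^{\phi}$-a.e.'' is equivalent to ``$f\in L^1(\Omega,|D\phi|)$, $f\geq1$ $|D\phi|$-a.e.'', and Proposition \ref{propo:wrigTV} applies verbatim with this same $f$.

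What your write-up then omits --- and what is the actual content of the weighted case --- is the transfer from $|D\phi_n|\wt f|D\phi|$ to $\F^{\phi_n}=\rho^2|D\phi_n|\wt f\rho^2|D\phi|=f\F^{\phi}$. This is where the paper spends the second half of its proof: it partitions $\Omega$ into cubes $Q_j^k$ with $|D\phi|(\partial Q_j^k)=0$ on which the oscillation of $\rho^2$ is at most $1/k$, squeezes $\F(\phi_n;E)$ between integrals of $\rho^2 f$ against $|D\phi|$ over slightly smaller and larger sets up to errors of order $\frac1k\int_\Omega f\d|D\phi|$, and lets $k\to+\infty$; together with $\sup_n\F^{\phi_n}(\Omega)<+\infty$ this gives $\F^{\phi_n}\wt f\F^{\phi}$. (Equivalently, one can test $|D\phi_n|\wt f|D\phi|$ against $\varphi\rho^2\in C_0(\Omega)$, using $\rho\in C^0(\overline{\Omega})$ and the uniform mass bound.) In particular, no coarea decomposition and no weight-dependent rescaling of the wriggling amplitude is needed inside the proof of this proposition: the weight is handled purely through its continuity and two-sided bounds \emph{after} the unweighted wriggling has been performed with the original multiplier $f$. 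As written, your argument denies the reduction that works and replaces it with an unproved weighted wriggling statement, so (Ad4) is not actually established.
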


We start by proving that the total variation is a purely lower semi-continuous functional.

\begin{proposition}[A wriggling result for Total Variation]\label{propo:wrigTV}
Let $\phi\in BV(\Omega)$ and $f\in L^1(\Omega,|D\phi|)$ with $f\geq 1$. Then there exists a sequence $\{\phi_k\}_{k\in \N} \subset BV(\Omega)$ such that
	\[
	\lim_{k\rightarrow +\infty} |D\phi_k|(E)=\int_E f \d |D\phi|\,,
	\]
for all Borel sets $E\subset\subset \Omega$ with $|D\phi|(\partial E)=0$.
\end{proposition}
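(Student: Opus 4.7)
The plan is to reduce the BV wriggling to the set-wriggling of Theorem \ref{thm:CCD} via the coarea formula. By a standard truncation plus diagonal procedure (replacing $\phi$ by $(\phi\wedge M)\vee(-M)$ and $f$ by $f\wedge M$, then letting $M\to+\infty$), I may assume $\phi\in BV(\Omega)\cap L^{\infty}(\Omega)$ with $\phi\geq 0$, and $f\in L^\infty(\Omega,|D\phi|)$ with $1\leq f\leq M$. Setting $E_t:=\{\phi>t\}$, the coarea formula gives, for every Borel $B\subset\Omega$,
\[
|D\phi|(B)=\int_{0}^{\|\phi\|_\infty} P(E_t;B)\d t\,,\qquad
\int_B f\d |D\phi|=\int_{0}^{\|\phi\|_\infty}\int_{\partial^*E_t\cap B} f\d\mathcal{H}^{d-1}\d t\,.
\]

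For each $k\in\N$ I would partition $[0,\|\phi\|_\infty]$ into $N_k$ intervals $I_j^k$ of length $\delta_k:=\|\phi\|_\infty/N_k\to0$, and pick $t_j^k\in I_j^k$ so that: (i) $E_{t_j^k}$ has finite perimeter; (ii) $f|_{\partial^*E_{t_j^k}}\in L^1(\partial^*E_{t_j^k};\mathcal{H}^{d-1};[1,+\infty))$; and (iii) the Riemann sums $\sum_j\delta_k\int g\d P(E_{t_j^k};\cdot)$ and $\sum_j\delta_k\int gf\d P(E_{t_j^k};\cdot)$ converge respectively to $\int g\d|D\phi|$ and $\int gf\d|D\phi|$ for every $g$ in a fixed countable dense subset of $C_c(\Omega)$. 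Properties (i)--(ii) hold for $\mathcal{L}^1$-a.e.\ $t$ by coarea, and (iii) is achieved by a Lebesgue-differentiation / random-selection argument applied to the countably many $L^1$-maps $t\mapsto\int g\d P(E_t;\cdot)$. For each $j,k$, Theorem \ref{thm:CCD} applied to $E_{t_j^k}$ with weight $f$ provides smooth sets $\{E_{j,k,n}\}_n$ with $\ca_{E_{j,k,n}}\to\ca_{E_{t_j^k}}$ in $L^1$ and $P(E_{j,k,n};\cdot)\wt f\,\mathcal{H}^{d-1}\restr\partial^*E_{t_j^k}$ as $n\to+\infty$. A diagonal choice $n=n(j,k)$ makes the $L^1$-defects summably small over $j$ and matches the perimeter measures on the countable test class. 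The approximants are then defined by
\[
\phi_k:=\sum_{j=1}^{N_k} \delta_k \ca_{E_{j,k,n(j,k)}}\,.
\]

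The convergence $\phi_k\to\phi$ in $L^1(\Omega)$ follows from the layer-cake identity $\phi=\int_0^{\|\phi\|_\infty}\ca_{E_t}\d t$ via Riemann-sum approximation and the diagonal control on the $L^1$-errors. The upper bound $|D\phi_k|(B)\leq\sum_j\delta_k P(E_{j,k,n(j,k)};B)$, combined with the weak* convergence of each $P(E_{j,k,n(j,k)};\cdot)$ and the selection in (iii), yields $\limsup_k|D\phi_k|(B)\leq\int_B f\d|D\phi|$ whenever $|D\phi|(\partial B)=0$. The main obstacle is the matching lower bound, which requires excluding cancellations between the jumps of the various $\ca_{E_{j,k,n(j,k)}}$ when assembling $|D\phi_k|$. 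Two regimes must be handled. Off the jump set of $\phi$, for $\mathcal{L}^1$-a.e.\ choice of $t_j^k$ the reduced boundaries $\partial^*E_{t_j^k}$ are pairwise essentially disjoint, so by shrinking the wriggling tubular neighborhoods in Theorem \ref{thm:CCD} one may arrange that the $\partial^*E_{j,k,n(j,k)}$ are pairwise disjoint as well. On the jump set of $\phi$, all level sets $E_{t_j^k}$ share the same reduced boundary with coherent outer normal, and because they are nested the wriggling in Theorem \ref{thm:CCD} can be performed consistently on all layers so that the wriggled sets $E_{j,k,n(j,k)}$ remain nested, forcing the jumps of $\phi_k$ to add constructively. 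Combining the two regimes gives $|D\phi_k|(B)=\sum_j\delta_k P(E_{j,k,n(j,k)};B)+o(1)$, which together with the Riemann-sum analysis of the previous step delivers the claimed convergence on every Borel $B\subset\subset\Omega$ with $|D\phi|(\partial B)=0$.
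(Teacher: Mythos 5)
Your overall strategy (slice $\phi$ by coarea, wriggle each level set via Theorem \ref{thm:CCD}, reassemble with a layer-cake sum) is genuinely different from the paper's proof, which never passes through level sets: there the wriggling is done on the \emph{domain}, by composing smooth approximations of $\phi$ with piecewise $C^1$ maps $S_n$ on a grid of cubes (Lemma \ref{lem:techTV}), first for piecewise-constant multipliers $f$ (Lemmas \ref{Lem:Fpiece-wiseconstant} and \ref{lem:plscONregf}) and then diagonalizing. However, your proposal has a genuine gap precisely at the step you flag as "the main obstacle''. The identity $|D\phi_k|=\sum_j\delta_k\,P(E_{j,k,n(j,k)};\cdot)$ needs either that the wriggled boundaries are pairwise $\mathcal H^{d-1}$-essentially disjoint or that the wriggled sets remain nested, and neither is available. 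Theorem \ref{thm:CCD} is a black box: it gives $L^1$-convergence of the sets and weak* convergence of the perimeters, but no control on \emph{where} the boundaries of the approximants lie and no compatibility between the approximants of different level sets. Your fix off the jump set --- "shrink the wriggling tubular neighborhoods'' --- does not work in general for $d\ge 2$: essential disjointness of $\partial^*E_{t_j^k}$ and $\partial^*E_{t_{j'}^k}$ does not give positive separation (two hypersurfaces tangent along a curve, or the boundaries produced by a nontrivial Cantor part, are essentially disjoint but at zero distance), so confining each wriggle to a small neighborhood of its own boundary does not prevent the approximants from crossing and cancelling. Your fix on the jump set --- "the wriggling can be performed consistently on all layers so that the wriggled sets remain nested'' --- is exactly the nontrivial statement that would have to be proved; it is not a consequence of applying Theorem \ref{thm:CCD} layer by layer, and a single pair of levels can simultaneously share jump points and be entangled in the diffuse region, so the two "regimes'' cannot be treated separately and then glued.

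It is worth noting that the paper's construction resolves this cancellation issue structurally: composing with a single deformation, $\phi\circ S_n$, wriggles \emph{all} level sets simultaneously, since $\{\phi\circ S_n>t\}=S_n^{-1}(\{\phi>t\})$, so nestedness is automatic and no cancellation can occur. If you want to salvage the coarea route, you would need a simultaneous (monotone) wriggling theorem for finite nested families of sets with a prescribed common density $f$, which in effect amounts to re-proving the domain-deformation lemma rather than invoking the set-by-set statement. The remaining parts of your outline (truncation and diagonalization, selection of good levels $t_j^k$, the Riemann-sum upper bound and the weak* identification through a countable class of test functions) are reasonable, but they rest on the unproved no-cancellation step, so the argument as written does not close.
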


The main technical step needed to get the above proposition is given by the following result.

\begin{lemma}\label{lem:techTV}
Fix $p\geq 1$, and let $Q_L\subset\R^d$ be a cube centered at the origin of edge length $L>0$.
Let $\phi\in C^{\infty}(Q_L)\cap C^0(\overline{Q}_L)$ and let $r>1$.
Then there exists a sequence of piece-wise $C^1$ maps $S_n:Q_L\rightarrow Q_L$ such that
\[
S_n =\Id \text{ on } \pa Q_L\,,
\quad\quad\quad
S_n\rightarrow \Id \text{ uniformly on }\overline{Q}_L\,,
\]
and
	\begin{itemize}
	\item[1)] $\displaystyle \lim_{n\rightarrow +\infty}
		\int_{Q_L} |\nabla  (\phi \circ S_n)|^p \d x=r \int_{Q_L} |\nabla  \phi|^p \d x$;
		\smallskip
	\item[2)] $\sup_{n\in \N} \|\nabla S_n\|_{L^\infty}<+\infty.$
	\end{itemize}
\end{lemma}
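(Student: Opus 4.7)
The plan is to build $S_n$ from small-scale one-dimensional sawtooth deformations aligned with the local direction of $\nabla\phi$, and to verify the limit on each tile via the classical averaging lemma for rapidly oscillating periodic functions. For the basic 1D block, pick $\alpha\in(0,1)$ and $c_+,c_-\geq 0$ satisfying
\[
\alpha c_+-(1-\alpha)c_-=1,\qquad \alpha c_+^p+(1-\alpha)c_-^p=r;
\]
such a choice exists for any $r>1$, $p\geq 1$ (for instance $\alpha=1/2$, $c_\pm=r\pm 1$ when $p=1$, or $\alpha=1/r$, $c_+=r$, $c_-=0$ when $p=2$). Let $\tilde\sigma:\R\to\R$ be the continuous piecewise linear function with $\tilde\sigma(0)=0$, slope $c_+$ on $(k,k+\alpha)$ and slope $-c_-$ on $(k+\alpha,k+1)$ for every $k\in\Z$; the first relation yields $\tilde\sigma(y+1)=\tilde\sigma(y)+1$ and $\|\tilde\sigma-\Id\|_\infty\leq\max(c_+,c_-)$. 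Rescale by $s_n(t):=(L/n)\tilde\sigma(nt/L)$ on $[0,L]$; a direct change of variable shows $s_n(0)=0$, $s_n(L)=L$, $\|s_n-\Id\|_\infty=O(1/n)$, $\|s_n'\|_\infty\leq\max(c_+,c_-)$, and $\int_0^L|s_n'|^p\d t=rL$.

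Next I would lift to $d$ dimensions by local alignment with $\nabla\phi$. Fix a tile size $1/m$ and partition $Q_L$ into cubes $\{Q^m_j\}$ of edge $1/m$ centered at $x^m_j$. On each cube where $|\nabla\phi(x^m_j)|\geq\eta>0$, choose a rotation $R^m_j\in\mathrm{SO}(d)$ with $R^m_j\nabla\phi(x^m_j)=|\nabla\phi(x^m_j)|e_1$, and define the local perturbation
\[
\xi^m_{n,j}(x):=\chi_j(x)\Big[s_n\big((R^m_j(x-x^m_j))_1\big)-(R^m_j(x-x^m_j))_1\Big](R^m_j)^T e_1,
\]
with $\chi_j$ a smooth cutoff supported in $Q^m_j$ and equal to $1$ on a slightly smaller cube $Q^{m,-}_j$. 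On the remaining cubes set $\xi^m_{n,j}\equiv 0$ and put globally $S_n(x):=x+\sum_{j}\xi^m_{n,j}(x)$. By construction $S_n$ is piecewise $C^1$, equals the identity on each $\partial Q^m_j$ (hence on $\partial Q_L$), converges uniformly to $\Id$, and satisfies a uniform-in-$n$ bound on $\|\nabla S_n\|_\infty$.

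On the interior $Q^{m,-}_j$ of a large-gradient cube the chain rule together with the Taylor expansion of $\phi$ give
\[
|\nabla(\phi\circ S_n)(x)|^p=|\nabla\phi(x^m_j)|^p\,|s_n'((R^m_j(x-x^m_j))_1)|^p+o_m(1),
\]
so that $\int_{Q^{m,-}_j}|\nabla(\phi\circ S_n)|^p\d x\to r|\nabla\phi(x^m_j)|^p|Q^{m,-}_j|$ as $n\to\infty$ by the 1D averaging. Summing over $j$, sending $m\to\infty$ (to absorb the linearization error and the contribution from the collars $Q^m_j\setminus Q^{m,-}_j$), and finally $\eta\to 0$, a diagonal extraction yields $\int_{Q_L}|\nabla(\phi\circ S_n)|^p\d x\to r\int_{Q_L}|\nabla\phi|^p\d x$. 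The hard part will be the patching: the collar thickness must exceed $1/n$ (so that the cutoff derivative does not compete with the sawtooth's oscillation) yet stay well below $1/m$ (so that the collar contributes negligibly to the integral), which forces a careful diagonal choice $n=n(m)\to\infty$.
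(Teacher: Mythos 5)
Your construction is correct, but it proves the lemma by a mechanism genuinely different from the paper's. The paper keeps the displacement along the gradient direction while making its amplitude a fast sawtooth in the \emph{transverse} variables (a wriggling of the level sets), so that on most of the cube $|\nabla(\phi\circ T_n)|^p=\rho^p(1+g_n'^2)^{p/2}$; the resulting gain factor $1+\bigl((1+b^2)^{p/2}-1\bigr)\omega_{d-1}2^{1-d}$ is then matched to the prescribed $r$ by a surjectivity (intermediate-value) argument in the slope $b$, combined with a dilution over a sub-cube $Q'$ carrying the deformation. You instead reparametrize \emph{along} the gradient with a two-slope sawtooth, and the algebraic constraints $\alpha c_+-(1-\alpha)c_-=1$ and $\alpha c_+^p+(1-\alpha)c_-^p=r$ give, respectively, the unit drift (hence $S_n\to\Id$ uniformly) and the exact mean $\fint|s_n'|^p=r$ for every $n$, with no continuity argument and the same formulas for all $p\geq1$; note your block map folds (is non-injective) when $c_->0$, which is harmless since only the chain rule, and no change of variables, is used. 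The second stage is morally the same in both proofs: linearize $\phi$ on a fine grid of cubes, apply the elementary block cube by cube with the identity on cube boundaries, and absorb the linearization error via uniform continuity of $\nabla\phi$; the paper's Step-one map is the identity on $\partial Q$ by construction, whereas in your version the matching lives in the cutoffs, with the constraints $1/n\ll w_m\ll 1/m$ on the collar width $w_m$ that you correctly flag (these also give $\|\nabla S_n\|_\infty\le\max(c_+,c_-)+1+O((nw_m)^{-1})$ bounded and, provided $\chi_j$ vanishes on an outer layer of each small cube wider than $\|s_n-\Id\|_\infty=O(1/n)$, ensure $S_n(Q_L)\subset Q_L$). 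The price of your route is extra bookkeeping: a rotation per cube, the threshold $\eta$ for near-critical cubes, whose excess $(r-1)\int_{\{|\nabla\phi|\lesssim\eta\}}|\nabla\phi|^p$ disappears only in a third limit, and a triple diagonal extraction in $n$, $m$, $\eta$; what it buys is an elementary one-dimensional block that hits the factor $r$ exactly for every $n$ and a uniform treatment of all $p\ge1$.
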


\begin{proof}
We divide the proof in several steps.\\
\text{}\\
\emph{Step one: $\phi$ affine.} Write $\phi(y)=y\cdot \nu +\phi_0$.
First of all we note that it suffices to show that, given $\beta>1$ and a cube $Q'\subset Q_L$ with two of its faces orthogonal to $\nu$, there exists a sequence of maps $T_n: Q'\rightarrow Q'$ such that
$T_n=\Id$ on $\partial Q'$, $T_n\rightarrow \Id$ uniformly on $\overline{Q'}$ and 
	\[
	\lim_{n\to+\infty}\int_{Q' } |\nabla  (\phi \circ T_n)|^p \d x 
		= \beta\int_{Q' } |\nabla \phi  |^p \d x  .
	\]
Indeed, by simply extending the map $T_n$ to the whole cube as the identity outside $Q'$ we get
	\begin{align}\label{eq:wrigglingbeta}
	 \int_{Q_L} |\nabla  (\phi \circ T_n)|^p\d x&=
	 	\int_{Q'} |\nabla  (\phi \circ T_n)|^p\d x
	 		+\int_{Q_L\setminus Q'} |\nabla \phi |^p  \nonumber \\
	& \rightarrow   \beta \int_{Q' } |\nabla   \phi  |^p \d x
		+ \int_{Q_L\setminus Q' } |\nabla   \phi  |^p \d x  \nonumber \\
	&=  |\nu|^p \left(\, \beta  |Q'|+ |Q\setminus Q'| \,\right) \nonumber \\
	&=\frac{1}{|Q_L|}\left(\, \beta  |Q'|
		+ |Q_L\setminus Q'| \,\right) \int_{Q_L}|\nu|^p \d x \nonumber \\
	&=\frac{1}{|Q_L|}\left(\, \beta  |Q'|+ |Q_L\setminus Q'| \,\right) \int_{Q_L} |\nabla \phi|^p \d x.
	\end{align}
Since the map
\[
\beta \mapsto \frac{1}{|Q_L|}\left(\, \beta  |Q'|+ |Q_L\setminus Q'| \,\right)
\]
is surjective on $[1,+\infty)$, given $r>1$ it is possible to find $\beta>1$ such that
\[
\frac{1}{|Q_L|}\left(\, \beta  |Q'|+ |Q_L\setminus Q'| \,\right)=r\,.
\]
Thus, using \eqref{eq:wrigglingbeta}, we conclude.

Therefore, we can assume without loss of generality, that $\nu=\rho \mathrm{e}_d$, $L=1, Q_L=Q$.
For $b>0$, let $s_b:\R\to\R$ be the periodic extension of the function
	\[
	t\mapsto (bt+b)\ca_{[-1,0]}(t)+(b-bt)\ca_{(0,1]}(t)\,,
	\]
for $t\in[-1,1]$.
For $n\in\N$ define the function $g_n:[-1,1]\rightarrow [0,+\infty)$ as
	\begin{equation}
		g_n(t):=\frac{1}{2n+1}s_b((2n+1)t).
	\end{equation}
Moreover, let $Q_n\subset Q$ be a cube of side length $1-\frac{1}{n}$, and fix a smooth cut-off function
$\varphi_n:\R\to[0,1]$ with $\varphi_n(t)=1$ for $|t|\leq 1-\frac{1}{n}$ and $\varphi_n(t)=0$ for
$|t|\geq 1$, and such that
	\begin{equation}\label{eq:cutoffvarphik}
		|\nabla \varphi_n|\leq Cn\,,
	\end{equation}
for some constant $C>0$ independent of $n$.
Define the function $T_n:Q\to Q$ as
	\[
	T_n(x):=x+\varphi_n(x\cdot\mathrm{e}_d) g_n(|\mathbb{P}(x)|\vee 1) \mathrm{e}_d\,,
	\]
where $\mathbb{P}(x):=x-(x\cdot \mathrm{e}_d)\mathrm{e}_d$.
We have
	\[
		\nabla T_n (x) = \Id
		+ \varphi_n (x\cdot \mathrm{e}_d)g'_n\left(|\mathbb{P}(x)|\vee 1\right)  \mathrm{e}_d
				\otimes \frac{\mathbb{P}(x)}{|\mathbb{P}(x)|}
		+g_n(|\mathbb{P}(x)|\vee 1) \varphi_n'(x\cdot \mathrm{e}_d) \mathrm{e}_d\otimes \mathrm{e}_d\,,
	\]
and
	\begin{align*}
		\nabla (\phi\circ T_n)(x)&=\rho \mathrm{e}_d \nabla T_n (x)\\
		&	=\rho \left[\, \mathrm{e}_d + \varphi_n(y\cdot \mathrm{e}_d) g'_n\left(|\mathbb{P}(x)|\vee 1\right)  
			\frac{\mathbb{P}(x)}{|\mathbb{P}(x)|}
		+g_n(|\mathbb{P}(x)|\vee 1) \varphi'(x\cdot \mathrm{e}_d) \mathrm{e}_d \,\right]\,.
	\end{align*}
Note that $T_n(Q)=Q$, $T_n=\Id$ on $\partial Q$ and that
\begin{equation}\label{eq:boundnabla}
\|\nabla T_n\|_{L^\infty}\leq C_{b}\,,
\end{equation}
where $C_b>0$ depends only on $b$.
Set $L_n:=1-\frac{1}{n}$, $Q_n:=(-L_n,L_n)^{d}$, $Q^{d-1}_n:=(-L_n,L_n)^{d-1}$.
Then
	\begin{align*}
	\int_{Q_n} |\nabla (\phi\circ T_n) (x) |^p\d x&=\rho^p \int_{Q_n} |\mathrm{e}_d \nabla T_n (x) |^p\d x \\
	&=\rho^p\int_{Q_n} (1+ g_n'^2(|\mathbb{P}(x)|\vee 1))^{\frac{p}{2}}\d x\\
	&=\rho^p\int_{-L_n/2}^{L_n/2} \int_{Q_n\cap \{ e_d\cdot x=t\}} 
		(1+ g_n'^2(|\mathbb{P}(x)|\vee 1))^{\frac{p}{2}}\d \H^{d-1}(x)\d t\\
	&=\rho^pL_n \int_{Q^{d-1}_n }  (1+ g_n'^2(|z|))^{\frac{p}{2}}\d \H^{d-1}(z)\\
	&=\rho^pL_n\left(\left(1-\frac{\omega_{d-1}}{2^{d-1}}\right)L_n^{d-1}
		+\left(\frac{L_n}{2}\right)^{d-1}\omega_{d-2}
			\int_{0}^{1} t^{d-2}(1+ b^2)^{\frac{p}{2}}\d t\right)\\
	&=\left( 1+\left((1+ b^2)^{\frac{p}{2}}-1\right)\frac{\omega_{d-1}}{2^{d-1}}\right)
		\int_{Q_n} |\nabla \phi (x)|^p\d x.
	\end{align*}
Hence
	\begin{align}\label{eqn:zio}
	\lim_{n\rightarrow +\infty} \int_{Q_n} |\nabla (\phi\circ T_n) |^p\d x
	=\left( 1+\left(\sqrt{1+b^2}-1\right)\frac{\omega_{d-1}}{2^{d-1}}\right)
	\int_Q |\nabla \phi (x)|^p\d x\,.
	\end{align}
On the other hand, using the definition of $g_n$, of $Q_n$, and \eqref{eq:cutoffvarphik}, we get
	\begin{align}\label{eq:zio1}
		\int_{Q\setminus Q_n} |\nabla (\phi \circ T_n)|^p\d x =0.
	\end{align}
Since the function
\[
b\mapsto \left(\, 1+\left((1+ b^2)^{\frac{p}{2}}-1\right)\frac{\omega_{d-1}}{2^{d-1}} \,\right)
\]
is surjective on $[1,+\infty)$, given $r>1$, it is possible to find $b>1$ such that, using \eqref{eqn:zio} and \eqref{eq:zio1}, we get
	\begin{equation}\label{eqn:zio2}
		\lim_{n\rightarrow +\infty} \int_Q |\nabla (\phi\circ T_n)|^p\d x
			=r\int_Q |\nabla \phi|^p\d x.
	\end{equation}
The required convergence on $T_n\rightarrow\Id$ follows at once.
\smallskip	

\emph{Step two:} \textit{$\phi\in C^{\infty}(Q_L)$}.
For $n\in\N$ consider the grid $\{Q_{i}^n\}_{i=1}^{n^d}$ of cubes of size $\frac{1}{n}$, with centers $x_{i}^n$ partitiong $Q$.
Fix $\delta>0$ and observe that there exists $n_0\in\N$ such that for all $n\geq n_0$ it holds
	\begin{equation}\label{eq:approxgradient}
	\max_{i=1,\dots n^d}\,\max_{y\in Q_i^n} |\nabla \phi(y)-\nabla \phi(x_i^n)|
		\leq \frac{\delta}{L^d}\,.
	\end{equation}
Set $\nu_i^n:=\nabla \phi(x_i^n)$, and define
\[
\psi_i^n(y):= y\cdot \nu_i^n+\phi(x_i^n)\,.
\]
For every $i=1,\ldots n^d$, thanks to the previous step, it is possible to find a map $T_i^n: \overline{Q_i^n}\to \overline{Q_i^n}$ with $T_i^n=\Id$ on $\partial Q_i^n$ such that
	\begin{equation}\label{eq:approxr}
	\left| \int_{ Q_i^n} |\nabla (\phi_i^n\circ T_i^n)|^p\d y
		- r\int_{Q_i^n} |\nabla \phi_i^n|^p\d y \right| \leq \frac{\delta}{n^d}.
	\end{equation}
Define $T_n:Q\to Q$ as
	\[
	T_n:=\sum_{i=1}^{n^d} T_i^n \ca_{Q_i^n}.
	\]
Using \eqref{eq:boundnabla} and \eqref{eq:approxgradient} we get
	\begin{equation}\label{eq:approxgradients1}
	\max_{y\in Q_L}\left|\nabla T_n(y) \nabla \phi (T_n(y))- \nabla T_n(y) \nu_i^n\right|
		\leq \frac{C \delta}{L^d}\,,
	\end{equation}
where the constant $C>0$ depends only on $L$ and $r$.
Therefore, from \eqref{eq:approxgradient}, \eqref{eq:approxr}, and \eqref{eq:approxgradients1} we get
\begin{align*}
\left|\, \int_{Q_L} |\nabla (\phi\circ T_n)|^p \d y - r\int_{Q_L} |\nabla \phi|^p \d y  \,\right|
	&\leq \sum_{i=1}^{n^d} \left|\, \int_{Q_i^n} |\nabla (\phi\circ T_n)|^p \d y
		- \int_{Q_i^n} |\nabla (\phi_i^n\circ T_n)|^p \d y  \,\right| \\
	&\hspace{0.5cm} + \sum_{i=1}^{n^d} \left|\, \int_{Q_i^n} |\nabla (\phi_i^n\circ T_n)|^p \d y
		- r \int_{Q_i^n} |\nabla \phi_i^n|^p \d y  \,\right| \\
	&\hspace{0.5cm} + r\sum_{i=1}^{n^d} \left|\, \int_{Q_i^n} |\nabla \phi_i^n|^p \d y
		- \int_{Q_i^n} |\nabla \phi|^p \d y  \,\right| \\
&\leq (1+C+r)\delta\,,
\end{align*}
where in the last step we used the inequality $|a^p-b^p|\leq |a-b|\,(|a|^{p-1}+|b|^{p-1})$.
Since $\delta>0$ is arbitrary, we conclude.
\end{proof}

It is now useful to introduce the following notation.
For a given Radon measure $\mu$ we define the \textit{class of regular multipliers $\mathfrak{m}(\Omega,\mu)$} as the family of $f\in L^1(\Omega,\mu)$ of the form
\[
f=\sum_{i=1}^{N} r_i \ca_{A_i}\,,
\]
where
\begin{itemize}
	\item[(i)] $r_i\geq 1$;
	\item[(ii)] $\{A_i\}_{i=1}^{N}$ is a finite family of pair-wise disjoint open subset of $\Omega$ with Lipschitz boundary;
	\item[(iii)]  $\mu(\partial A_i)=0$ for all $i=1,\ldots,N$.
\end{itemize}

By standard arguments of measure theory it is possible to prove the following density result.

\begin{lemma}\label{Lem:Fpiece-wiseconstant} 
Let $\mu\in\mathcal{M}^+(\Omega)$ be a Radon measure and pick $f\in L^1(\Omega,\mu)$ with $f\geq 1$ $\mu$-a.e. on $\Omega$. Then there exists a sequence $\{f_k\}_{k\in \N}\subset \mathfrak{m}(\Omega,\mu)$ such that $f_k\mu\wt f\mu$.
\end{lemma}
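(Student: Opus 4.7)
The strategy is a two-stage approximation: first discretize the values of $f$, then discretize the underlying Borel sets via a cubic grid whose boundaries carry no $\mu$-mass.

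For the value discretization, I would set $s_k := 1 + k^{-1}\lfloor k(f-1)\rfloor$, so that $1 \leq s_k \leq f$ and $|f-s_k| \leq 1/k$ pointwise, whence $s_k\to f$ in $L^1(\Omega,\mu)$ since $\mu(\Omega)<+\infty$. Each $s_k$ takes countably many values $\{1+j/k\}_{j\geq 0}$, but dominated convergence permits truncating the tail: for $J$ large, $s_{k,J}:=\min\{s_k, 1+J/k\}$ differs from $s_k$ in $L^1(\mu)$ by at most $\int_{\{f>1+J/k\}} f\, \d\mu \to 0$ as $J\to+\infty$. This produces simple functions of the form $\sum_{i=1}^{N} r_i\ca_{E_i}$ with $r_i\geq 1$ and $\{E_i\}$ a Borel partition of $\Omega$, which converge to $f$ in $L^1(\Omega,\mu)$ and hence $s_{k,J}\mu \wt f\mu$.

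For the domain discretization, given any simple $s=\sum_{i=1}^N r_i\ca_{E_i}$ as above, I would tile $\R^d$ by open cubes of side $1/n$, translated by a vector $v_n\in (0,1/n)^d$ chosen so that $\mu(\partial Q_j^n\cap\Omega)=0$ for every cube $Q_j^n$ of the grid. Such a translation exists because along each coordinate direction only countably many hyperplane positions can carry positive $\mu$-mass. Retaining only those cubes with $Q_j^n\subset\subset\Omega$ and $\mu(Q_j^n)>0$, set the average $\bar s_j^n := \mu(Q_j^n)^{-1}\int_{Q_j^n} s\, \d\mu \geq 1$ and define
\begin{equation*}
\tilde s_n := \sum_{Q_j^n\subset\subset\Omega,\,\mu(Q_j^n)>0} \bar s_j^n \ca_{Q_j^n} \in \mathfrak{m}(\Omega,\mu),
\end{equation*}
which satisfies all the requirements of $\mathfrak{m}(\Omega,\mu)$ by construction. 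For $\varphi\in C_0(\Omega)$, the compactness of $\spt\varphi$ in $\Omega$ ensures that every cube meeting $\spt\varphi$ is contained in $\Omega$ for $n$ large, and uniform continuity yields
\begin{equation*}
\int_\Omega \varphi\, \tilde s_n\, \d\mu = \sum_j \bar s_j^n \int_{Q_j^n} \varphi\, \d\mu = \sum_j \varphi(x_j^n)\int_{Q_j^n} s\, \d\mu + o(1) \longrightarrow \int_\Omega \varphi\, s\, \d\mu,
\end{equation*}
where $x_j^n$ is the center of $Q_j^n$. Since $\tilde s_n\mu(\Omega)\leq \int_\Omega s\, \d\mu$ uniformly in $n$, this yields $\tilde s_n\mu \wt s\mu$.

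A diagonal extraction based on the metric $\d_\mathcal{M}$ from Lemma \ref{lem:metric} then combines the two stages into one sequence $\{f_k\}_{k\in\N}\subset \mathfrak{m}(\Omega,\mu)$ with $f_k\mu \wt f\mu$. The main technical subtlety lies in the grid-translation argument: one must simultaneously kill the $\mu$-mass on all cube boundaries of the grid, and restrict to cubes properly contained in $\Omega$ so that each $A_i=Q_j^n$ is an open subset of $\Omega$ with Lipschitz boundary. The cubes discarded near $\partial\Omega$ cause no loss because any compactly supported test function is eventually captured by the interior cubes, which is exactly what weak* convergence requires.
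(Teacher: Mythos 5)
Your proof is correct, and it is essentially the argument the paper has in mind: the paper states Lemma \ref{Lem:Fpiece-wiseconstant} with only the remark that it follows ``by standard arguments of measure theory,'' and your two-stage scheme (uniform value discretization with tail truncation, then averaging over a cubic grid translated so that $\mu$ charges no cube boundary, followed by a diagonal extraction in the metric of Lemma \ref{lem:metric}) is exactly the standard construction the authors use elsewhere, e.g.\ in Proposition \ref{propo:fullRCVRYabsoluteCouple} and in the choice of grids with $\F^\phi(\partial Q^n_j\cap\Omega)=0$. The points you flag — killing the $\mu$-mass of all grid hyperplanes by a suitable translation, keeping only cubes compactly contained in $\Omega$ so that each $A_i$ is open with Lipschitz boundary, and using the uniform mass bound so that testing against compactly supported functions suffices — are precisely the details needed, and they are handled correctly.
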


Therefore, we just need to provide the wriggling construction for functions $f\in \mathfrak{m}(\Omega,|D\phi|)$. This will be done in next result.

\begin{lemma}\label{lem:plscONregf}
Let $\phi\in BV(\Omega)$ and let $f\in \mathfrak{m}(\Omega,|D\phi|)$. Then there exists a sequence of maps $\{\phi_n\}_{n\in \N} \subset W^{1,1}(\Omega)$ such that $\phi_n\rightarrow \phi$ in $L^1$ and $|D\phi_n|\wt f |D\phi|$.
\end{lemma}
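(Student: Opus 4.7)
The plan is to combine an Anzellotti--Giaquinta smooth approximation of $\phi$ with the cube-by-cube wriggling of Lemma \ref{lem:techTV}, and then extract a diagonal sequence in the three parameters (smoothing scale, mesh size, and wriggling index). As inherited from Lemma \ref{Lem:Fpiece-wiseconstant}, we work under $f\geq 1$ $|D\phi|$-a.e., so that $|D\phi|\bigl(\Omega\setminus\bigcup_i A_i\bigr)=0$.

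First I would fix $\{\phi_k\}_{k\in\N}\subset C^{\infty}(\Omega)\cap W^{1,1}(\Omega)$ with $\phi_k\to\phi$ in $L^1(\Omega)$ and $|D\phi_k|(\Omega)\to|D\phi|(\Omega)$. Strict convergence gives $|D\phi_k|\wt|D\phi|$, hence $|\nabla\phi_k|\L^d\restr A_i\wt|D\phi|\restr A_i$ thanks to the Lipschitz regularity of $\partial A_i$ and $|D\phi|(\partial A_i)=0$. For fixed $n\in\N$ I would take a grid of cubes of side $1/n$ on $\R^d$, translated (uniformly in $k$ by a countability argument) so that $|D\phi|$ and every $|\nabla\phi_k|\L^d$ assign zero mass to its walls. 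Let $\mathcal{Q}_i^n$ be the sub-family of grid cubes contained in $A_i$ and set $B_i^n:=\bigcup\mathcal{Q}_i^n$. Lipschitz regularity of $\partial A_i$ yields $|A_i\setminus B_i^n|\to 0$, and since $\phi_k$ is smooth
\[
\int_{A_i\setminus B_i^n}|\nabla\phi_k|\d x\to 0\quad\text{as } n\to+\infty.
\]

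On each $Q\in\mathcal{Q}_i^n$ I would invoke Lemma \ref{lem:techTV} with $p=1$ and $r=r_i$ (taking $S^m_{Q,k}=\Id$ if $r_i=1$), obtaining piecewise $C^1$ maps $S^m_{Q,k}:Q\to Q$ equal to $\Id$ on $\partial Q$, converging uniformly to $\Id$ with $\sup_m\|\nabla S^m_{Q,k}\|_{L^\infty}<+\infty$ and
\[
\int_Q|\nabla(\phi_k\circ S^m_{Q,k})|\d x\xrightarrow[m\to+\infty]{}r_i\int_Q|\nabla\phi_k|\d x.
\]
Then I would set $\phi_{k,n,m}:=\phi_k\circ S^m_{Q,k}$ on each $Q\in\mathcal{Q}_i^n$ and $\phi_{k,n,m}:=\phi_k$ elsewhere; matching with the identity on every $\partial Q$ guarantees $\phi_{k,n,m}\in W^{1,1}(\Omega)$, and uniform convergence of the wriggling maps gives $\|\phi_{k,n,m}-\phi_k\|_{L^1}\to 0$ as $m\to+\infty$.

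To upgrade the single-cube mass identity to $|D\phi_{k,n,m}|\wt f|D\phi|$, I would test against $\varphi\in C_c(\Omega)$: replacing $\varphi$ on each cube by its value at the centre (with error $O(\omega_\varphi(1/n))$, $\omega_\varphi$ being the modulus of continuity) and summing the cube-wise wriggling identities yields, as $m\to+\infty$,
\[
\int_\Omega\varphi\d|D\phi_{k,n,m}|\longrightarrow\sum_{i=1}^N r_i\int_{B_i^n}\varphi|\nabla\phi_k|\d x+\int_{\Omega\setminus\bigsqcup_i B_i^n}\varphi|\nabla\phi_k|\d x+o_n(1).
\]
Sending $n\to+\infty$ (the residual integrals in $A_i\setminus B_i^n$ vanish by absolute continuity of $|\nabla\phi_k|\L^d$), and then $k\to+\infty$ (using $|\nabla\phi_k|\L^d\restr A_i\wt|D\phi|\restr A_i$ together with the null mass of $|D\phi|$ outside $\bigcup_i A_i$), one gets
\[
\lim\int_\Omega\varphi\d|D\phi_{k,n,m}|=\sum_i r_i\int_{A_i}\varphi\d|D\phi|=\int_\Omega\varphi f\d|D\phi|.
\]
Metrizability of weak$^*$ convergence on mass-bounded sets (Lemma \ref{lem:metric}) then permits a diagonal extraction $\phi_n:=\phi_{k_n,n,m_n}$ with $\phi_n\to\phi$ in $L^1$ and $|D\phi_n|\wt f|D\phi|$.

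The main obstacle is precisely that Lemma \ref{lem:techTV} delivers only a total-gradient-mass identity on each fixed cube, so promoting it to a genuine local weak$^*$ statement requires the Riemann-sum argument on the refining grid, and this intermediate step must be carried out \emph{before} sending $k\to+\infty$ so that the absolute continuity of $|\nabla\phi_k|\L^d$ can still be used to absorb the residual mass near $\partial A_i$ and across the cubes.
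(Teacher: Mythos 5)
Your proposal is correct and follows essentially the same route as the paper's proof: smooth approximation of $\phi$ by strict convergence, a grid of cubes inside each $A_i$ on which Lemma \ref{lem:techTV} is applied with $r=r_i$, gluing along the cube boundaries, and a final diagonal extraction. The only differences are organizational — you verify the weak* convergence by testing against $C_c(\Omega)$ functions via a Riemann--sum argument and keep three indices, whereas the paper tests on Borel sets with null boundary and merges mesh and wriggling into a single index — and your explicit reduction to the case $|D\phi|\bigl(\Omega\setminus\bigcup_i A_i\bigr)=0$ is consistent with how the lemma is used in Proposition \ref{propo:wrigTV} and is at least as careful as the paper's treatment of that region.
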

\begin{proof}
We divide the proof in three steps.\\

\textit{Step one:} Fix an open set $A\subset \Omega$, $\varphi\in C^{\infty}(\Omega)\cap C^0(\ov{\Omega})$ and $r\geq 1$.
The goal of this step is to construct a sequence $\{\varphi_k\}_{k\in \N}\subset W^{1,1}(\Omega)$ such that $\varphi_k=\varphi$ on $\Omega\setminus A$ and
	\[
	|D\varphi_k|\restr A \wt r|D\varphi|\restr A, \quad\quad\quad \varphi_k\rightarrow \varphi
		\quad \text{in $L^1$}.
	\] 
For each $k\in\N$, consider a grid of cubes $\{Q_j^k\}_{j\in \N}$ with side length $1/k$, and define the finite set of indexes
	\[
	\mathcal{J}_A^k:=\{ j \in \N \ | \ Q_j^k\cc A\}.
	\]
Using Lemma \ref{lem:techTV}, for each $Q_j^k$ there exists a smooth map $S_j^k:\Omega\to\Omega$ such that $S_j^k=\Id$ on $\Omega\setminus Q_j^k$ and
	\begin{equation}\label{e:kill}
	\left| |D (\varphi \circ S_j^k)|(Q_j^k)-r|D\phi|(Q_j^k)\right|+ \| \varphi \circ S_j^k- \varphi\|_{L^1(\Omega)}\leq \frac{1}{\# (\mathcal{J}_A^k) k}.
	\end{equation}
Define
	\[
	\varphi_k:=\sum_{j\in \mathcal{J}^k_A} (\varphi\circ S_j^k)\ca_{Q_j^k}\,.
	\]
Note that $\varphi_k\in W^{1,1}(\Omega)$, $\varphi_k=\varphi$ on $\Omega\setminus A$ and by construction $\varphi_k\rightarrow \varphi$ in $L^1$.
We now need to show that $|D\varphi_k|\restr A \wt r|D\varphi|\restr A$.
For, note that
\[
\sup_{k\in\N} |D\varphi_k|(A)<+\infty\,.
\]
Let $E \subset\subset  \Omega$ be a bounded Borel set such that $|D\varphi|(\partial E)=0$.
Take an open set $U\subset A$ with $U\supset \partial E$ such that $|D\varphi|(U)\leq \eta$ and define 
\[
	\mathcal{J}_{A,E}^k:=\{ j \in \mathcal{J}_A^k\ | \ Q_j^k\subset E \}\,,
	\quad\quad\quad
	\mathcal{J}_{A,\partial E}^k:=\{ j \in \mathcal{J}_{\partial E}^k\ | \ Q_j^k\cap \partial E\neq \emptyset \}.
	\]
Then
	\begin{align*}
	|D\varphi_k|(A\cap E)&=|D\varphi|((A\cap R_k)\cap E)+\sum_{j\in \mathcal{J}_{A,\partial E}^k} |D\varphi_k|(E\cap Q_j^k)+\sum_{j\in \mathcal{J}_{A, E}^k} |D\varphi_k|(Q_j^k)\\
	|D\varphi|(A\cap E)&=|D\varphi|((A\cap R_k)\cap E)+\sum_{j\in \mathcal{J}_{A,\partial E}^k} |D\varphi|(E\cap Q_j^k)+\sum_{j\in \mathcal{J}_{A, E}^k} |D\varphi|(Q_j^k),
	\end{align*}
	where 
\[
	R_k:=\Omega\setminus \left(\bigcup_{j\in \mathcal{J}_A^k}  Q_j^k\right).
\]
Take $k$ large enough so that $j\in \mathcal{J}_{\partial E}^k$ implies $Q_j^k\subset U$, and
	\[
	|D\varphi_k|(U)+|D\varphi|(U)\leq 3\eta\,.
	\]
Then, from the definition of $\varphi_k$ and \eqref{e:kill} we get
	\begin{align}\label{eq:estDphik}
| |D\varphi_k|(A\cap E) - r|D\varphi|(A \cap E)|\leq &\left|
	\sum_{j\in \mathcal{J}_{A, E}^k} |D\varphi_k|(Q_j^k) - r|D\varphi|(Q_j^k) \right|
		+|D\varphi|((A\cap R_k)\cap E)(r-1) \nonumber \\
&\hspace{1cm}+|D\varphi_k|(U)+|D\varphi|(U) \nonumber \\
&\leq 3\eta +\frac{1}{k}+ (r-1)|D\varphi|((A\cap R_k)\cap U).
	\end{align}
Since
\[
\bigcap_{k\in \N} R_k=(\Omega\setminus A)\,,
\]
by taking the limit as $k\rightarrow+\infty$ in \eqref{eq:estDphik} we get
\[
\lim_{k\to+\infty}| |D\varphi_k|(A\cap E) - r|D\varphi|(A \cap E)|\leq 3\eta\,.
\]
Since $\eta>0$ is arbitrary, we conclude that
	\[
	\lim_{k\rightarrow +\infty} |D\varphi_k|(A\cap E)=r|D\varphi|(A\cap E)\,,
	\]
getting $|D\varphi_k|\restr A \wt r|D\varphi|\restr A$.\\

\textit{Step two:} Let $\phi\in BV(\Omega)$, and $f\in L^1(\Omega,|D\phi|)$ of the form
\[
f=\sum_{j=1}^N r_i\ca_{A_i}\,,
\]
where $A_1,\dots,A_N\subset\Omega$ are pairwise disjoint open sets.
A density argument (See \cite[Proposition 3.21, Theorem 3.9]{AFP}) provides a sequence of maps
$\{\phi_{n}\}_{n\in\N}\subset C^{\infty}(  \Omega) \cap C^0(\overline{\Omega})$ such that
\[
\phi_{n}\rightarrow \phi \quad L^1( \Omega )\,,
\quad\quad\quad\quad
|D\phi_n|=|\nabla \phi_n|\L^n \wt |D\phi|\,.
\] 
For any $n\in \N$ and $i=1,\dots,N$, the previous step yields a sequence of maps $\{\phi_{i,n}^k\}_{k\in \N}\subset W^{1,1}(\Omega)$ such that $\phi_{i,n}^k=\phi_n$ on $\Omega\setminus A_i$ and
\[
|D\phi_{i,n}^k|\restr A_i \wt r_i |D\phi_n|\restr A_i\,,
\quad\quad\quad
\phi_{i,n}^k\rightarrow \phi_n\quad\text{ in } L^1(\Omega) \text{ as }k\rightarrow +\infty\,.
\]
Define
	\[
	\phi_n^k:=\sum_{i=1}^N \phi_{i,n}^k\ca_{A_i}\,.
	\]
Note that
\[
\phi_n^k\rightarrow \phi_n\quad\text{ in } L^1(\Omega)\,,
\quad\quad\quad
|D\phi_n^k|\wt f|D\phi_n|\,,
\]
as $k\to+\infty$.
Since by assumption $|D\phi|(\partial A_i)=0$ for all $i=1,\ldots, N$ we also have
\[
f|D\phi_n|\wt |D\phi|\,,
\]
as $n\rightarrow +\infty$.
Therefore, a diagonalization argument allows us to conclude.
\end{proof}

We are now in position to prove that the total variation is purely lower semi-continuous.

\begin{proof}[Proof of proposition \ref{propo:wrigTV}]
Fix $\phi\in BV(\Omega)$, and $f\in L^1(\Omega,|D\phi|)$.
By Lemma \ref{Lem:Fpiece-wiseconstant} applied to $\mu=|D\phi|$ we find a sequence of $\{f_k\}_{k\in \N} \in \mathfrak{m}(\Omega,|D\phi|)$ such that $f_k|D\phi|\wt f|D\phi|$.
Then, Lemma \ref{lem:plscONregf} applied on each $f_k\in \mathfrak{m}(\Omega,|D\phi|)$ yields a sequence of maps $\{\phi_n^k\}_{n\in \N}\subset W^{1,1}(\Omega)$ such that $|D\phi_n^k|\wt f_k|D\phi|$, $\phi_n^k\rightarrow \phi$ in $L^1(\Omega)$ as $n\rightarrow +\infty$.
Thus, we conclude by using a diagonalization argument.
\end{proof}

We can now conclude the proof of Proposition \ref{prop:TVCLLC}

\begin{proof}[Proof of Proposition \ref{prop:TVCLLC}]
Properties (Ad1) and (Ad2) follow easily by the definition of $\F$.

In order to prove property (Ad3) we argue as follows.
Let $\phi\in BV(\Omega)$, and $A\subset \Omega$ an open set such that $\F(\phi;A)=0$.
Fix $\e>0$, and an open set $U\cc A$.
Let $x\in U$ and $R>0$ such that $B_R(x)\subset U$.
Then $\phi\equiv c$ on $B_R(x)$.
Let $r\in(0,R)$ that will be chosen later, and set $\overline{\phi}:= \phi+\ca_{B_r(x)}$.
Then
\[
\F(\overline{\phi};\partial U)=0\,,\quad\quad\quad
\|\overline{\phi}-\phi\|_{L^1}\leq \omega_d r^d\,,
\]
and
	\[
	0<\F(\overline{\phi};A)\leq \int_{\partial B_r}\rho^2 \d\H^{d-1}\leq
		\left(\max_{\overline{\Omega}}\rho^2\right) d\omega_d r^{d-1}.
	\]
Taking
\[
r<\left(\frac{\varepsilon}{\left(\max_{\overline{\Omega}} \rho^2\right) d\omega_d}\right)^{\frac{1}{d-1}}
\]
we get the desired result.\\
\smallskip

Finally, we show that $\F$ is purely lower semi-continuous.
Fix $\phi\in BV(\Omega)$, $f\geq 1$, $f\in L^1(\Omega, \F^{\phi})$.
Let $\{\phi_n\}_{n\in\N}$ be the sequence given by Proposition \ref{propo:wrigTV} such that $\phi_n\to\phi$ in $L^1(\Omega)$ and
\begin{equation}\label{eq:propertyphin}
|D\phi_n|\wt f|D\phi|\,.
\end{equation}
We claim that
\[
\F^{\phi_n}\wt f \F^{\phi}\,.
\]

For each $k\in\N$, consider a partition of $\Omega$ into cubes $\{Q_j^k\}_{j=1}^{M_k}$ such that
$|D\phi|(\partial Q_j^k)=0$ and
	\begin{equation}\label{eq:rho1k}
	\left|\max_{Q_j^k\cap \overline{\Omega}} \rho^2 -\min_{Q_j^k\cap \overline{\Omega}} \rho^2 \right|
		\leq \frac{1}{k}
	\end{equation}
for all $j=1,\dots,M_k$.
Note that, for any open set $A\subset \subset \Omega$, the following estimates hold
	\[
	\left(\min_{\overline{Q_j^k}\cap \overline{A}} \rho^2 \right) |D\phi|(Q_j^k\cap  A)\leq	
		\F(\phi;Q_j^k\cap A )\leq 
		|D\phi|(Q_j^k\cap A)\left(\max_{\overline{Q_j^k}\cap \overline{A}} \rho^2 \right)\,.
	\]
Take a bounded Borel set $E\subset\subset\Omega$ such that $\F^{\phi}(\partial E)=0$.
Consider bounded open sets $E_1\subset\subset E\subset\subset E_2$.
Then, using \eqref{eq:rho1k}, we get
	\begin{align*}
	\F(\phi_n;E)&\geq \sum_{\substack{Q_j^k \cc E\\ Q_j^k\cap E_1\neq  \emptyset}}  \left(\min_{\overline{Q_j^k}} \rho^2 \right) |D\phi_n|(Q_j^k)\\
	&\geq \sum_{\substack{Q_j^k \cc E\\ Q_j^k\cap E_1\neq  \emptyset}}
		\left(\max_{\overline{Q_j^k}} \rho^2 -\frac{1}{k}\right) |D\phi_n|(Q_j^k)\,.
	\end{align*}
By taking the limit as $n \rightarrow+\infty$ on both sides, and using \eqref{eq:propertyphin} and \eqref{eq:rho1k}, we have
	\begin{align*}
	\lim_{n\rightarrow +\infty}\F(\phi_n;E)&\geq	
	\sum_{\substack{Q_j^k \cc E\\ Q_j^k\cap E_1\neq  \emptyset}}
		\int_{Q_j^k} \left(\max_{\overline{Q_j^k}} \rho^2 -\frac{1}{k}\right)f |D\phi|\\
	&\geq \sum_{\substack{Q_j^k \cc E\\ Q_j^k\cap E_1\neq  \emptyset}}
		\int_{Q_j^k} \left(\rho^2 - \frac{1}{k}\right)f \d |D\phi|\\
	&\geq \int_{E_1}  \rho^2 f \d |D\phi| -\frac{1}{k}\int_{\Omega} f\d|D\phi|\,.
	\end{align*}
Similar computation shows also
\begin{align*}
	\lim_{n\rightarrow +\infty}\F(\phi_n;E)&
		\leq\sum_{\substack{Q_j^k \cc E_2\\ Q_j^k\cap E\neq  \emptyset}}
			\int_{Q_j^k} \left(\min_{Q_j^k} \rho^2 +\frac{1}{k} \right)f \d|D\phi|\\
	&\leq \int_{E_2}  \rho^2 f \d|D\phi|+\frac{1}{k}\int_{\Omega} f\d|D\phi|.
	\end{align*}
Being the above valid for all $k\in \N$ yields
	\[
 \int_{E_1}  \rho^2 f \d |D\phi|\leq \lim_{n\rightarrow +\infty}\F(\phi_n;E)
 	\leq \int_{E_2}  \rho^2 f \d |D\phi|\,.
	\]
Moreover, since $E_1$ and $E_2$ are arbitrary and $\F^{\phi}(\partial A)=0$, we conclude that
	\[
	\lim_{n\rightarrow +\infty}\F(\phi_n;E)=\int_E  \rho^2 f \d |D\phi|\,.
	\]
Since 
\[
\sup_{n\in\N} \F^{\phi_n}(\Omega)<+\infty\,,
\]
we conclude that $\F^{\phi_n}\wt f \F^{\phi}$.
\end{proof}


\subsubsection{A non local approximation}
The nonlocal approximation of the weighted total variation we consider in this section is the one used by Garc\'{i}a-Trillos and Slep\v{c}ev in \cite{trillos2016continuum} in the context of total variation on graphs.

Let $\eta:[0,+\infty)\rightarrow [0,+\infty)$ be a compactly supported smooth function with $\int_0^{+\infty} \eta \d t =1$.
For $\varepsilon>0$ consider the energy $\F_{\e}:L^1(\Omega)\times \Ao(\Omega)\rightarrow [0,+\infty]$
defined as
	\[
	\F_{\e}(\phi;A):=\int_{A} \left(\int_{\Omega} \frac{|\phi(x)-\phi(y)|}{\e} \eta_{\e} (|x-y|)\rho(y)\d y\right)\rho(x)\d x
	\]
where $\eta_{\e}(t):=\eta(t \e^{-1})\e^{-d}$.
Fix a sequence $\{\varepsilon_n\}_{n\in\N}$ such that $\varepsilon_n\to0$ as $n\to+\infty$, and set $\F_n:=\F_{\varepsilon_n}$.
In \cite{trillos2016continuum} the authors proved the following $\Gamma$-convergence result.

\begin{theorem}\label{thm:DejanNicolas}
It holds that $\F_n\stackrel{\Gamma}{\rightarrow}\F$ with respect to the $L^1$ convergence, where
\[
\F(\phi;A):= \sigma_{\eta}\int_{A} \rho^2\d |D\phi|\,,
\]
and
\[
\sigma_{\eta}:=\int_0^{+\infty} t^n \eta(t)\d t\,.
\]
Moreover, if $\{\phi_n\}_{n\in\N}\subset L^1(\Omega)$ is a sequence for which
\[
\sup_{n\in\N} \F_n(\phi_n)<+\infty\,,
\]
then, up to a not relabeled subsequence, $\phi_n\to\phi$ in $L^1(\Omega)$, where $\phi\in BV(\Omega)$
\end{theorem}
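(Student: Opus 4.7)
The plan is to establish the three pieces of $\Gamma$-convergence plus compactness separately, adapting to the weighted setting the non-local-to-local scheme of Bourgain--Brezis--Mironescu and the $\Gamma$-convergence refinement of Ponce. The central observation, which drives everything, is the change of variables $z = (y-x)/\e$: it rewrites
\[
\F_\e(\phi;A) = \int_A \rho(x) \int_{\R^d} \frac{|\phi(x+\e z)-\phi(x)|}{\e}\,\eta(|z|)\,\rho(x+\e z)\,\d z\,\d x,
\]
so that the inner integrand is, heuristically, a non-local directional difference quotient weighted by $\rho$.

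For the \emph{limsup} inequality I would start with $\phi\in C^1(\overline\Omega)$ and use a first-order Taylor expansion of $\phi(x+\e z)$ uniformly on the support of $\eta$; the integrand converges pointwise to $\rho(x)^2\,|\nabla\phi(x)\cdot z|\,\eta(|z|)$, and dominated convergence plus the continuity of $\rho$ produces
\[
\lim_{n\to+\infty}\F_n(\phi;A) = \int_A \rho(x)^2\,|\nabla\phi(x)|\,\d x \cdot \int_{\R^d}|e_1\cdot z|\,\eta(|z|)\,\d z,
\]
which matches the stated $\sigma_\eta$ after the usual polar reduction. To upgrade to $\phi\in BV(\Omega)$, I would mollify: $\phi^\delta:=\phi\ast\eta_\delta$ satisfies $|D\phi^\delta|(\Omega)\to |D\phi|(\Omega)$ as $\delta\to0$, and a standard diagonal argument (in the spirit of Remark \ref{rem:Gamma}) provides the recovery sequence for $\phi$.

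For the \emph{liminf} inequality I would reduce to the one-dimensional case by slicing along unit directions $e\in S^{d-1}$: after a Fubini decomposition of the integral in $z$, the weighted non-local energy controls the 1D weighted total variation of the slices of $\phi$; integrating the slicewise lower bounds against the direction measure on $S^{d-1}$ reconstructs $\sigma_\eta\int_A\rho^2\,\d|D\phi|$ by the classical formula for BV seminorms. The weight $\rho$, being continuous, can be pulled out of the slicewise argument up to a uniformly small error, using that $\rho(y)\rho(x)=\rho(x)^2 + o(1)$ as $|y-x|\to0$.

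The \emph{compactness} statement follows from the BBM/Ponce-type bound: since $\inf_{\overline\Omega}\rho>0$, a uniform bound on $\F_n(\phi_n)$ yields a uniform bound on the unweighted non-local functional, which in turn controls averaged $L^1$ moduli of continuity and, together with the Lipschitz regularity of $\partial\Omega$ (to localize near the boundary), gives $L^1$-precompactness by Riesz--Fréchet--Kolmogorov; the $L^1$-limit lies in $BV(\Omega)$ by the liminf inequality applied with $A=\Omega$. The main technical obstacle throughout is keeping track of the weight $\rho$ inside the double integral: unlike the unweighted case, one cannot pull $\rho$ out of the inner integration, so one must exploit continuity of $\rho$ to approximate $\rho(x+\e z)$ by $\rho(x)$ with quantitatively controlled errors that survive the limit. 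Once this is done carefully, the remaining arguments reduce to the unweighted framework.
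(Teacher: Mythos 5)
You should first be aware that the paper does not prove Theorem \ref{thm:DejanNicolas} at all: it is quoted verbatim from Garc\'{i}a-Trillos and Slep\v{c}ev \cite{trillos2016continuum}, and the only thing proved in this paper about the nonlocal functionals is the upgrade to a good approximating sequence, i.e.\ property (GA3) in Proposition \ref{propo:nonlocalTV}. So there is no internal proof to compare your argument against; what can be said is how your outline relates to the standard route in the literature. Your scheme (rescaling $z=(y-x)/\e$, limsup by Taylor expansion for smooth $\phi$ plus strict approximation and a diagonal argument for $BV$, liminf by reduction to the unweighted local functional, compactness of BBM/Ponce type, with the weight handled by uniform continuity of $\rho$ since the kernel localizes) is exactly the established approach, and as an outline it is essentially sound.

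Two steps, as written, would not go through without the standard fixes. For compactness, a bound on $\F_n(\phi_n)$ only controls increments $\int|\phi_n(\cdot+h)-\phi_n|$ for $|h|\lesssim \e_n$, so Riesz--Fr\'echet--Kolmogorov cannot be applied directly to $\{\phi_n\}$; the usual remedy is to mollify $\phi_n$ at scale $\e_n$, show the mollified functions are bounded in $BV(\Omega')$ (hence $L^1$-precompact) and that $\|\phi_n-\phi_n\ast\eta_{\e_n}\|_{L^1}\to 0$, using the Lipschitz boundary to handle the boundary layer. For the limsup upgrade to $BV$, convergence of $|D\phi^\delta|(\Omega)$ to $|D\phi|(\Omega)$ alone does not give $\int_\Omega\rho^2\,\d|D\phi^\delta|\to\int_\Omega\rho^2\,\d|D\phi|$; you need strict convergence, which yields $|D\phi^\delta|\wt|D\phi|$ together with convergence of the total masses, and then continuity and boundedness of $\rho^2$ (a Reshetnyak-type continuity argument) to pass the weighted masses to the limit. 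Finally, a bookkeeping remark: the pointwise limit for smooth $\phi$ produces the constant $\int_{\R^d}\eta(|z|)\,|z\cdot e|\,\d z$, which equals the paper's $\sigma_\eta$ only up to the angular factor coming from the polar decomposition (the formula $\int_0^{+\infty}t^n\eta(t)\,\d t$ as printed omits it), so ``matches after the usual polar reduction'' should be made explicit rather than asserted.
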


It is possible to improve the above $\Gamma$-convergence result and obtain the following.

\begin{proposition}\label{propo:nonlocalTV}
The sequence $\{\F_n\}_{n\in\N}$ is a good approximating sequence for $\F$.
\end{proposition}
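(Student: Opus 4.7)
The plan is to verify the three properties (GA1)--(GA3) of Definition \ref{def:goodApproximating}. Two of them are essentially automatic and the work is concentrated in (GA3).

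\textbf{Properties (GA1) and (GA2).} For (GA1), the $\Gamma$-liminf inequality in Theorem \ref{thm:DejanNicolas} is local in nature: the argument of Garc\'ia-Trillos--Slep\v cev, applied with the outer integration domain $\Omega$ replaced by an open subset $A\subset \Omega$, yields
\[
\sigma_\eta\!\int_A \rho^2\,\d|D\phi|\leq \liminf_{n\to+\infty}\F_n(\phi_n;A)
\]
for every $\phi_n\to \phi$ in $L^1(\Omega)$. For (GA2), observe that $\F_n(\phi;A)=\int_A g_n^\phi\,\d x$ with the nonnegative density
\[
g_n^\phi(x):=\rho(x)\int_\Omega \frac{|\phi(x)-\phi(y)|}{\e_n}\eta_{\e_n}(|x-y|)\rho(y)\,\d y,
\]
so $A\mapsto\F_n(\phi;A)$ is the restriction of the Radon measure $g_n^\phi\,\L^d$ to $\Ao(\Omega)$. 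In particular $\F_n^\phi\ll\L^d$, hence \emph{every} $\F_n^\phi$ is non atomic, which means the non-atomicity requirement in (GA3) is free.

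\textbf{Property (GA3): strategy.} Given $\phi\in BV(\Omega)$, I build the recovery sequence via a density argument followed by a diagonal extraction.

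\emph{Step 1 (smooth approximation with matching energy).} By the classical density result in $BV$ (see, e.g., \cite[Thm.\ 3.9, Prop.\ 3.21]{AFP}), there exist $\phi^k\in C^\infty(\Omega)\cap W^{1,1}(\Omega)$ with $\phi^k\to\phi$ in $L^1(\Omega)$ and $|\nabla\phi^k|\,\L^d$ converging strictly to $|D\phi|$. Since $\rho^2\in C^0(\overline\Omega)$ is strictly positive, the Reshetnyak continuity theorem upgrades this to the weighted strict convergence
\[
\rho^2\,|\nabla\phi^k|\,\L^d \,\wt\, \rho^2\,|D\phi|,\qquad \int_\Omega\rho^2|\nabla\phi^k|\,\d x\,\longrightarrow\,\int_\Omega\rho^2\,\d|D\phi|,
\]
i.e.\ $\F^{\phi^k}_{\mathrm{loc}}:=\sigma_\eta\rho^2|\nabla\phi^k|\L^d$ converges narrowly to $\F^\phi$.

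\emph{Step 2 (nonlocal-to-local at fixed smoothness).} For each fixed $k$, I claim that
\[
\F_n^{\phi^k}\,\wt\,\F^{\phi^k}_{\mathrm{loc}}\quad\text{and}\quad \F_n(\phi^k;\Omega)\to\F^{\phi^k}_{\mathrm{loc}}(\Omega)\qquad\text{as }n\to+\infty.
\]
This is obtained from pointwise convergence of the densities: performing the change of variables $y=x+\e_n z$ and using the $C^1$ regularity of $\phi^k$ together with the compact support of $\eta$, for a.e.\ $x\in\Omega$ one gets $g_n^{\phi^k}(x)\to \sigma_\eta\,\rho^2(x)\,|\nabla\phi^k(x)|$ (near $\partial\Omega$ only a truncated mollification contributes, still bounded by $C\|\nabla\phi^k\|_\infty$). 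Since $|g_n^{\phi^k}|$ is uniformly bounded on $\Omega$, the Lebesgue dominated convergence theorem yields $\F_n^{\phi^k}(A)\to\F^{\phi^k}_{\mathrm{loc}}(A)$ for every Borel $A\subset\Omega$, which gives both the narrow convergence and the total mass convergence.

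\emph{Step 3 (diagonal extraction).} Using the metric $\d_\M$ provided by Lemma \ref{lem:metric}, combine Steps 1 and 2: pick a subsequence $k(n)\to+\infty$ slow enough that
\[
\|\phi^{k(n)}-\phi\|_{L^1(\Omega)}+\d_\M\!\bigl(\F_n^{\phi^{k(n)}},\F^\phi\bigr)+\bigl|\F_n(\phi^{k(n)};\Omega)-\F^\phi(\Omega)\bigr|\,\longrightarrow\,0,
\]
which is possible because each term factors through the composition ``$n\to\infty$ at fixed $k$'' (Step 2) and ``$k\to\infty$'' (Step 1). Setting $\phi_n:=\phi^{k(n)}$ produces the sequence required by (GA3).

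\textbf{Main obstacle.} The delicate part is Step 2, and specifically obtaining the convergence of the \emph{total} masses, not just weak$^*$ convergence on sets compactly contained in $\Omega$. This is what forces one to use the Lipschitz regularity of $\partial\Omega$, the continuity and strict positivity of $\rho$ up to $\overline\Omega$, and the compact support of $\eta$ in order to produce a global uniform $L^\infty$ bound on $g_n^{\phi^k}$ and thus apply dominated convergence on the whole $\Omega$. Once these boundary effects are tamed, everything else is routine approximation.
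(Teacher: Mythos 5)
Your overall architecture ((GA1), (GA2) immediate; (GA3) by smooth approximation, nonlocal-to-local convergence at fixed smoothness, and a diagonal extraction) matches the paper's. However, there is a genuine gap exactly at the point you flag as the ``main obstacle'', namely the global dominated-convergence argument in Step 2. The approximations produced by your Step 1 (the density theorem of \cite[Theorem 3.9, Proposition 3.21]{AFP}) are only in $C^\infty(\Omega)\cap W^{1,1}(\Omega)$: their gradients are integrable but in general \emph{not} bounded up to $\partial\Omega$ (the mollification scale degenerates near the boundary), so the quantity $\|\nabla\phi^k\|_{L^\infty(\Omega)}$ you use to dominate $g_n^{\phi^k}$ need not be finite, and no uniform-in-$n$ integrable majorant is available from that construction. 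Even granting a bounded gradient, the estimate $|\phi^k(x)-\phi^k(y)|\leq \|\nabla\phi^k\|_\infty |x-y|$ requires the chord $[x,y]$ to stay in $\Omega$, which fails for nonconvex Lipschitz domains when $|x-y|\leq M\e_n$; so the boundary layer is not ``tamed'' by the ingredients you list. Without the uniform bound, the convergence of the total masses at fixed $k$ is precisely a weighted Dávila/BBM-type theorem on a Lipschitz domain, i.e.\ not a routine consequence of pointwise convergence.

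The paper closes this gap with an extension device that you could adopt almost verbatim: since $\partial\Omega$ is Lipschitz, extend $\phi$ to a $BV$ function on $\R^d$ with $|D\phi|(\partial\Omega)=0$, fix $\delta>0$ and approximate the extension by functions $\phi_n$ smooth on the enlarged set $\Omega_\delta:=\{x:\dist(x,\Omega)\leq\delta\}$ with $\rho^2|\nabla\phi_n|\,\L^d\wt\rho^2|D\phi|$ on $\Omega_\delta$. Then the change of variables $h=\frac{x-y}{\e_n}$, $z=x+t(y-x)$ gives, for any Borel $E\cc\Omega$ with $|D\phi|(\partial E)=0$,
\[
\F_n(\phi_n;E)\leq \sigma_\eta\int_{E_{\e_n}}|\nabla\phi_n|\,\rho^2\,\d z + R_n\,,\qquad R_n\leq C\,\e_n\int_{E_{\e_n}}|\nabla\phi_n|\,\rho^2\,\d z\,,
\]
where the $\e_n$-neighbourhood $E_{\e_n}$ stays inside $\Omega_\delta$, so no boundary layer ever appears; the resulting limsup bound $\limsup_n\F_n(\phi_n;E)\leq\F(\phi;E)$, combined with the localized liminf from (GA1) (and Lemma \ref{lem:weakconvopen}), yields both $\F_n^{\phi_n}\wt\F^\phi$ and $\F_n^{\phi_n}(\Omega)\to\F^\phi(\Omega)$, i.e.\ (GA3), without any two-parameter diagonalization in your Step 3. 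In short: your plan is salvageable, but Step 1 must produce approximations that are smooth in a \emph{neighbourhood of} $\overline\Omega$ (via extension, using $|D\phi|(\partial\Omega)=0$), not merely smooth inside $\Omega$; as written, Step 2 does not go through.
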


\begin{proof}
Properties (GA1) follows from the liminf inequality of Theorem \ref{thm:DejanNicolas}, while properties (GA2) is immediate from the definition of $\F_n$.

In order to prove property (GA3), we follow the same steps used in the proof of Theorem \ref{thm:DejanNicolas} (see \cite[Section 4.2]{trillos2016continuum}), that we briefly report here for the reader's convenience.
Given $\phi\in BV(\Omega)$, we extended it to a $BV$ function defined in the whole $\R^d$ in such a way that the extension, still denoted by $\phi$, is such that $|D\phi|(\partial\Omega)=0$. For any $\delta>0$, having set
	\[
	\Omega_{\delta}:=\{x\in \R^d \ | \ \dist(x,\Omega)\leq \delta\}
	\]
let $\{\phi_n\}_{n\in\N}\subset C^\infty(\Omega_{\delta})$ be a sequence such that $\phi_n\to\phi$ in $L^1(\Omega_{\delta})$, and
\[
 \rho^2|D\phi_n|\wt \rho^2 |D\phi| \quad \text{on $\mathcal{M}^+(\Omega_{\delta})$}.
 \]
Let $E\subset\subset \Omega$ be a Borel set with $|D\phi|(\partial E)=0$
Assume $[0,M]$ is the support of $\eta$. 
Then it holds that
	\begin{align*}
	\F_n(\phi_n;E)&:=\int_E \int_\Omega \frac{|\phi_n(x)-\phi_n(y)|}{\e_n}
		\eta_{\e_n}(|x-y|)\rho(x)\rho(y) \d y \d x\\
	&\leq \int_E \int_{B(x,M\e_n)} \eta_{\e_n}(|x-y|) \left|\, \int_0^1 \nabla\phi_n(x+t(y-x))\cdot(y-x) \d t  
			\,\right| \rho(y)\rho(x) \d y \d x \\
	&\leq \int_{E_{\e_n}} \int_{\{|h|\leq M\}} \eta(|h|) \int_0^1 |\nabla\phi_n(z)\cdot h|
		 \rho(z-t\e_n h)\rho(z+(1-t)\e_n h)	\d t \d h \d z \\
	&=\sigma_h \int_{E_{\e_n}} |\nabla \phi_n(z)|\rho^2(z) \d z + R_{n,k}\,, 	
	\end{align*}
where we used the change of variable $h:=\frac{x-y}{\e_n}$ and $z=x+t(y-x)$.
In the case $\rho$ is Lipschitz it is possible to prove that
\[
R_{n,k}\leq C \e_n \int_{E_{\e_n}} |\nabla \phi_n| \rho^2 \d z\,,
\]
for some constant $C>0$ independent of $k$.
Therefore
	\begin{equation*}
		\limsup_{n\rightarrow +\infty} \F_n(\phi;E)\leq \F(\phi;\overline{E})=\F(\phi;E)\,.
	\end{equation*}
\end{proof}

The result of Proposition \ref{propo:nonlocalTV} allows us to apply Theorem  \ref{prop:TVCLLC} to get the following.

\begin{proposition}\label{prop:resulttotal}
Let $\psi:[0,+\infty)\to(0,+\infty)$ be a Borel function with $\inf\psi>0$.
For $\e>0$ consider the functional
	\begin{equation*}
		\mathcal{G}_\e(\phi,\mu):=\int_{\Omega} \left[\, \int_{\Omega}\frac{|\phi(x)-\phi(y)|}{\e}
			\eta_{\e}(|x-y|)\rho(y)\d y \,\right] \psi(u(x))\rho(x) \d x\,,
	\end{equation*}
if $\phi\in L^1(\Omega)$, $\mu \in \mathcal{M}^+(\Omega)$ such that 
\[
\mu= u \rho \left(\int_{\Omega}\frac{|\phi(x)-\phi(y)|}{\e} \eta_{\e}(|x-y|)\rho(y)\d y\right) \L^d
\]
and $+\infty$ otherwise on $L^1(\Omega)\times \mathcal{M}^+(\Omega)$.
Then $\mathcal{G}_\e\stackrel{\Gamma}{\rightarrow}\G$ with respect to the $L^1\times w^*$ topology, where
	\[
		\mathcal{G}(\phi,\mu):=\sigma_{\eta}\int_{\Omega} \psi^{cs}(u)\rho^2\d |D\phi|+\Theta^{cs}\mu^{\perp}(\Omega)\,,
	\]
where we write $\mu=u\left(\sigma_{\eta}\rho^2|D\phi|\right)+\mu^{\perp}$.
\end{proposition}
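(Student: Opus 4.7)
The plan is essentially to invoke the general machinery of Theorem \ref{thm:mainthm1}, after observing that the hypotheses have been verified in the preceding two propositions. The first step is to identify the functional $\mathcal{G}_\e$ appearing in the statement with the $\F_\e$-relative energy $\mathcal{G}^{\F_\e}$. Indeed, by the very definition of $\F_\e$, the measure $\F_\e^\phi$ is absolutely continuous with respect to the Lebesgue measure $\L^d$, with density
\[
\rho(x)\int_\Omega \frac{|\phi(x)-\phi(y)|}{\e}\eta_\e(|x-y|)\rho(y)\d y.
\]
Consequently the condition $\mu = u\F_\e^\phi$ in the general definition of $\mathcal{G}^{\F_\e}$ is exactly the condition $\mu = u\rho\bigl(\int_\Omega \tfrac{|\phi(x)-\phi(y)|}{\e}\eta_\e(|x-y|)\rho(y)\d y\bigr)\L^d$ appearing in the statement, and in that case $\int_\Omega \psi(u)\d\F_\e^\phi$ coincides with $\mathcal{G}_\e(\phi,\mu)$. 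Both functionals are $+\infty$ otherwise, so the identification is complete.

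Analogously, by Theorem \ref{thm:DejanNicolas} the limit energy $\F$ satisfies $\F^\phi = \sigma_\eta \rho^2 |D\phi|$, so that the Radon--Nikodym decomposition of a measure $\mu \in \mathcal{M}^+(\Omega)$ with respect to $\F^\phi$ takes the form $\mu = u(\sigma_\eta \rho^2 |D\phi|) + \mu^\perp$ used in the statement. The general limit functional $\mathcal{G}^\F_{\mathrm{lsc}}$ from \eqref{eqn:sharpenergy1} therefore rewrites as
\[
\mathcal{G}^\F_{\mathrm{lsc}}(\phi,\mu) = \sigma_\eta\int_\Omega \psi^{cs}(u)\rho^2\d|D\phi| + \Theta^{cs}\mu^\perp(\Omega),
\]
which matches $\mathcal{G}(\phi,\mu)$ exactly.

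With these identifications in place, the conclusion is immediate: by Proposition \ref{prop:TVCLLC} we have $\F \in \Cllc$, and by Proposition \ref{propo:nonlocalTV}, for any infinitesimal sequence $\e_n \to 0$ the family $\{\F_{\e_n}\}_{n\in\N}$ is a good approximating sequence for $\F$, i.e.\ lies in $\ga(\F)$. Hence Theorem \ref{thm:mainthm1} applies and yields $\mathcal{G}^{\F_{\e_n}} \stackrel{\Gamma}{\to} \mathcal{G}^\F_{\mathrm{lsc}}$ with respect to the $L^1\times w^*$ topology, which via the identifications above is precisely the claim. Since the target sequence $\{\e_n\}$ was arbitrary, the full $\Gamma$-convergence statement $\mathcal{G}_\e \stackrel{\Gamma}{\to} \mathcal{G}$ follows. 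No substantive obstacle arises at this stage, as all the delicate points (the wriggling construction for the weighted total variation and the recovery of the local energy by the non-local functionals) have been absorbed into Propositions \ref{prop:TVCLLC} and \ref{propo:nonlocalTV}.
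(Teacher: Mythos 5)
Your proposal is correct and is exactly the route the paper takes: Proposition \ref{prop:resulttotal} is stated there as an immediate consequence of Proposition \ref{prop:TVCLLC} (the weighted total variation is in $\Cllc$) and Proposition \ref{propo:nonlocalTV} (the nonlocal functionals form a good approximating sequence), combined with Theorem \ref{thm:mainthm1}, after the same identifications of $\mathcal{G}_\e$ with $\mathcal{G}^{\F_\e}$ and of the limit with $\mathcal{G}^{\F}_{\mathrm{lsc}}$ that you spell out. The only cosmetic point worth noting is that the limit energy carries the constant $\sigma_\eta$, so one should observe that $\sigma_\eta\rho^2$ is again an admissible weight (absorb $\sqrt{\sigma_\eta}$ into $\rho$), which keeps $\F\in\Cllc$ as you use it.
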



\subsection{Realaxation of the p-Dirichlet energy}\label{sec:dir}

In this last section we would like to note that the choice of the $L^1$ convergence is not fundamental for the validity of the main results of this paper.
Indeed, Theorem \ref{thm:mainthm1} holds also if the $L^1$ converge is replaced by the $L^p$ convergence.
Of course, in Definition \ref{def:Admissible} and \ref{def:goodApproximating} one also has to replace the $L^1$ convergence with the $L^p$ one.

As an example of application of our results in the $L^p$ case, we consider, for $p>1$, the energy $\F:L^p(\Omega)\times\Oin\rightarrow [0,+\infty)$ given by
	\[
	\F(\phi;A):=\int_{A} |\nabla \phi|^p \d x \,.
	\]
when $\phi\in W^{1,p}(\Omega)$, and $+\infty$ otherwise.

\begin{proposition}
For any $p>1$, $\F\in \Cllc$.
\end{proposition}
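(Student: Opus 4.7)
The plan is to verify the four axioms of Definition \ref{def:Admissible} with the $L^1$ topology replaced by the $L^p$ topology. Property (Ad1) is standard: if $\phi_n \to \phi$ in $L^p(\Omega)$ and $\liminf_n \int_A |\nabla\phi_n|^p\,dx < +\infty$, then up to a subsequence $\phi_n$ converges weakly in $W^{1,p}(A)$ to $\phi$, and convexity of $\xi \mapsto |\xi|^p$ yields the desired semi-continuity. Property (Ad2) is immediate because for $\phi \in W^{1,p}(\Omega)$ the map $A \mapsto \int_A |\nabla\phi|^p\,dx$ extends to the finite Radon measure $|\nabla\phi|^p \L^d$. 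For (Ad3), if $\F^\phi(A) = 0$ then $\phi$ is locally constant on $A$; given $U \cc A$ and $\e > 0$, pick a non-zero $\chi \in C_c^\infty(U)$ and set $\overline{\phi} := \phi + \delta\chi$, so that $\overline{\phi} = \phi$ in a neighborhood of $\partial U$, hence $\F^{\overline{\phi}}(\partial U) = 0$, while $\F^{\overline{\phi}}(A) = \F^{\overline{\phi}}(U) = \delta^p \int |\nabla\chi|^p\,dx > 0$. Choosing $\delta > 0$ sufficiently small makes both $\|\overline{\phi} - \phi\|_{L^p}$ and $\F^{\overline{\phi}}(U)$ less than $\e$.

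The substantive step is (Ad4), and I would adapt the three-step wriggling scheme developed for the weighted total variation in Lemmas \ref{Lem:Fpiece-wiseconstant}, \ref{lem:plscONregf} and Proposition \ref{propo:wrigTV}, with $|D\phi|$ replaced by $|\nabla\phi|^p \L^d$. Given $\phi \in W^{1,p}(\Omega)$ and $f \geq 1$ in $L^1(\Omega,\F^\phi)$, proceed as follows. First, reduce via Lemma \ref{Lem:Fpiece-wiseconstant} applied to $\mu = \F^\phi$ to a regular multiplier $f = \sum_{i=1}^N r_i \ca_{A_i}$ on pairwise disjoint Lipschitz open sets $A_i$ with $\F^\phi(\partial A_i) = 0$. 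Second, approximate $\phi$ in $W^{1,p}(\Omega)$ by $\varphi_n \in C^\infty(\Omega) \cap C^0(\overline{\Omega})$; strong convergence $\nabla\varphi_n \to \nabla\phi$ in $L^p(\Omega)^d$ upgrades to $|\nabla\varphi_n|^p \to |\nabla\phi|^p$ in $L^1(\Omega)$, and since $f \in L^\infty(\Omega)$ this gives $f|\nabla\varphi_n|^p \L^d \weakstar f|\nabla\phi|^p \L^d$. Third, for each $n$ and $k$, wriggle $\varphi_n$ inside each $A_i$ on a cube grid $\{Q_j^k \cc A_i\}$ of side $1/k$ using the piecewise $C^1$ maps $S_j^k$ supplied by Lemma \ref{lem:techTV} with amplification factor $r_i$, extended by the identity outside $Q_j^k$; since $S_j^k = \Id$ on $\partial Q_j^k$, the glued function $\varphi_n^k$ belongs to $W^{1,p}(\Omega)$, and the bounded-Borel-set argument of Step one of Lemma \ref{lem:plscONregf} yields $|\nabla\varphi_n^k|^p \L^d \weakstar f|\nabla\varphi_n|^p \L^d$ as $k \to +\infty$. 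A diagonal procedure then produces $\phi_n \in W^{1,p}(\Omega)$ with $\phi_n \to \phi$ in $L^p(\Omega)$ and $\F^{\phi_n} \weakstar f\F^\phi$.

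The hard kernel of the argument, namely the wriggling amplification itself, is already packaged in Lemma \ref{lem:techTV}: the explicit sawtooth construction is stated and proved for every $p \geq 1$ and produces the correct amplification factor on a cube for an affine map, which then transfers to smooth $\varphi_n$ by a localisation on a fine grid as in Step two of that lemma. The only additional point requiring attention is the $L^p$-closeness of $\varphi_n^k$ to $\varphi_n$ as $k \to +\infty$; this follows from the uniform bound on $\|\nabla S_j^k\|_{L^\infty}$ combined with the uniform convergence $S_j^k \to \Id$ on each cube and the continuity of $\varphi_n$ on $\overline{\Omega}$, so that $\varphi_n \circ S_j^k \to \varphi_n$ uniformly on each $Q_j^k$ and hence in $L^p$. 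Nothing in the scheme uses the $L^1$ topology in an essential way once convergence in $L^1$ is replaced by convergence in $L^p$ throughout.
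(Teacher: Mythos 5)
Your proposal is correct and follows essentially the same route as the paper: (Ad1)--(Ad3) are verified directly (your smooth-bump construction for (Ad3) is the natural $W^{1,p}$ substitute for the characteristic-function perturbations used in the other examples, which the paper leaves as ``easy to see''), and (Ad4) is obtained by adapting the wriggling machinery exactly as the paper prescribes, i.e.\ Lemma \ref{lem:techTV} for $p>1$, Lemma \ref{Lem:Fpiece-wiseconstant} applied to $\mu=|\nabla\phi|^p\L^d$, and Lemma \ref{lem:plscONregf} suitably adapted. Your extra observations (strong $L^1$ convergence of $|\nabla\varphi_n|^p$ replacing the weak* step of the $BV$ case, and the $L^p$-closeness of the glued compositions) fill in the details the paper compresses into ``slight variation''.
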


\begin{proof}
It is easy to see that assumptions (Ad1),(Ad2) and (Ad3) are satisfied. Thus $\F\in \Ad$. 
The fact that $\F$ is purely lower semicontinuous is obtained by using a slight variation of the proof of Proposition \ref{propo:wrigTV} by applying Lemma \ref{lem:techTV} for $p>1$, Lemma \ref{Lem:Fpiece-wiseconstant} on $\mu=|\nabla \phi|^p\L^n$ and Lemma \ref{lem:plscONregf} suitably adapted.
\end{proof}

Noting that the constant sequence $\F_n:= \F$ is a good approximating sequence for $\F$ and using Corollary \ref{cor:relcllc} we obtain the following Proposition.

\begin{proposition}
The $L^p\times w^*$ lower semicontinuous envelope of $\mathcal{G}^{\F}(\phi,\mu)$ is
	\[
		\mathcal{G}(\phi,\mu):=\int_{\Omega} \psi^{cs}(u)|\nabla \phi|^p \d x +\Theta^{cs}\mu^{\perp}(\Omega)\,,
	\]
where we write $\mu=u|\nabla \phi|^p\L^n+\mu^{\perp}$.
\end{proposition}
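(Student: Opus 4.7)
The plan is to obtain this proposition as a direct corollary of the abstract theory, namely Corollary \ref{cor:relcllc} (in its $L^p\times w^*$ version, whose validity was asserted at the start of Section \ref{sec:dir}). The two ingredients required are: (i) $\F\in\Cllc$, which has just been established in the preceding proposition; (ii) $\F$ is non atomic, so that by Remark \ref{rmk:nonatomicimpliesGA} the constant sequence $\F_n:=\F$ belongs to $\ga(\F)$.

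First I would verify non atomicity. For every $\phi\in W^{1,p}(\Omega)$ the measure $\F^\phi=|\nabla\phi|^p\L^d\restr\Omega$ is absolutely continuous with respect to the $d$-dimensional Lebesgue measure, hence non atomic. Consequently, by Remark \ref{rmk:nonatomicimpliesGA}, taking $\F_n:=\F$ defines a good approximating sequence: (GA1) and (GA2) are trivial, and (GA3) is obtained by choosing the constant recovery sequence $\phi_n:=\phi$, for which $\F_n^{\phi_n}=\F^\phi$ is already non atomic and the required weak* convergence together with the convergence of total masses is immediate.

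Next I would invoke Corollary \ref{cor:relcllc} (in the $L^p\times w^*$ formulation). This yields that the $L^p\times w^*$ lower semicontinuous envelope of $\mathcal{G}^\F$ is
\[
\mathcal{G}^\F_{\mathrm{lsc}}(\phi,\mu)=\int_\Omega \psi^{cs}\!\left(\frac{\d\mu}{\d\F^\phi}\right)\d\F^\phi+\Theta^{cs}\,\mu^\perp(\Omega).
\]
Specializing to $\F^\phi=|\nabla\phi|^p\L^d$, the Radon–Nikodym decomposition $\mu=u\F^\phi+\mu^\perp$ reads $\mu=u|\nabla\phi|^p\L^d+\mu^\perp$, so the density term becomes $\int_\Omega \psi^{cs}(u)|\nabla\phi|^p\,\d x$, giving the claimed formula.

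The only non-routine point, and thus the main obstacle, is the tacit passage from the $L^1\times w^*$ to the $L^p\times w^*$ setting, which was announced at the start of the section. One has to check that every step in the construction of the recovery sequence (Propositions \ref{thm:density}, \ref{propo:fullRCVRYabsoluteCouple}, \ref{propo:fromConvToSubConv}, \ref{propo:BorToConv}) and in the liminf inequality (Proposition \ref{propo:lowerbound}) remains valid when $L^1$ is replaced by $L^p$ in Definitions \ref{def:Admissible} and \ref{def:goodApproximating}. This is a routine adaptation since those arguments only use the topology through convergence in the space $X$ and the measure-theoretic properties of $\F^\phi$, neither of which is affected by the exponent; the sole substantive change is in the wriggling result for the $p$-Dirichlet integral, which was already addressed in the previous proposition via Lemma \ref{lem:techTV} for general $p\geq 1$.
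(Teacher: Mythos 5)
Your proposal is correct and follows essentially the same route as the paper: the paper also deduces the statement by observing that $\F^{\phi}=|\nabla\phi|^{p}\mathcal{L}^{d}$ is non atomic, so the constant sequence $\F_n:=\F$ is a good approximating sequence (Remark \ref{rmk:nonatomicimpliesGA}), and then applying Corollary \ref{cor:relcllc} in its $L^p\times w^*$ form, with the purely lower semicontinuous property supplied by the preceding proposition via Lemma \ref{lem:techTV}. Your explicit specialization of the Radon--Nikodym decomposition and your remark on the routine $L^1\to L^p$ adaptation are consistent with what the paper leaves implicit.
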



\section{Appendix}

We here provide some technical results that has been used in the development of our arguments.

The first is a technical result, a \textit{Crumble Lemma}, namely a tool that allows us to disintegrate the domain of a non atomic measure $\F$ in sub-domains containing, asymptotically, a certain percentage of the total mass $\F$. This result plays a key role in the proof of Proposition \ref{propo:BorToConv}.

\begin{lemma}[Crumble Lemma]\label{lem:Crumble}
Let $\mu$ be a non-atomic positive Radon measure on $Q:=(-\frac{1}{2},\frac{1}{2})^d$, and $\{\mu_n\}_{n\in\N}$ be a sequence of Radon measure on $Q$.
 Then, for any $\lambda\in (0,1)$ there exists a sequence of Borel sets $\{R_j\}_{j\in \N}$, with $R_j\subset Q$ such that
	\begin{itemize}
	\item[a)] $\mu\restr R_j\wt \lambda\mu$;
	\item[b)] $\mu\restr Q\setminus R_j\wt (1-\lambda)\mu$;
	\item[c)] $\mu_n(\partial R_j)=0$ for all $n,j\in \N$.
	\end{itemize}
\end{lemma}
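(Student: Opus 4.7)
The plan is to construct $R_j$ as a union of small cubes from a sufficiently fine grid, chosen so that its $\mu$-mass inside each coarse cube closely approximates $\lambda$ times the local $\mu$-mass, and so that all sub-cube faces are simultaneously $\mu_n$-null for every $n$. First, I would fix $j \in \N$, pick an integer $N_j \in \N$ (to be determined), and choose a translation vector $v_j \in \R^d$ such that, in each coordinate direction, none of the hyperplanes of the refined grid $G_{v_j, 1/(jN_j)}$ is charged by any $\mu_n$; this is possible because, for each $n$ and each coordinate direction $\ell$, the set of levels $t$ with $\mu_n(\{x_\ell = t\} \cap Q) > 0$ is at most countable (being $\mu_n$ a $\sigma$-finite Radon measure), so the union of all such levels over $n$, $\ell$, and integer grid offsets is still countable, leaving uncountably many admissible $v_j$. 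The cubes $\{Q_i^j\}_{i=1}^{j^d}$ of side $1/j$ come from $G_{v_j, 1/j}$, and each $Q_i^j$ is partitioned into $N_j^d$ sub-cubes $\{Q_{i,\alpha}^j\}_\alpha$ of side $1/(jN_j)$ from $G_{v_j, 1/(jN_j)}$; by construction every face of every sub-cube is $\mu_n$-null for all $n$.

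Next, I would invoke non-atomicity of $\mu$ together with compactness of $\overline{Q}$ to obtain the uniform smallness statement $\sup\{\mu(C) : C \text{ axis-aligned cube of side} \leq \delta\} \to 0$ as $\delta \to 0^+$ (proved by contradiction: otherwise one extracts a subsequence of cubes of positive $\mu$-mass whose centers converge to a point that $\mu$ would charge, against non-atomicity and outer regularity). Hence I can pick $N_j$ so large that $\max_{i,\alpha} \mu(Q_{i,\alpha}^j) < 1/j^{d+1}$. Within each $Q_i^j$ I then reorder the sub-cubes by decreasing $\mu$-mass and take the smallest prefix of them whose accumulated $\mu$-mass first reaches or exceeds $\lambda\, \mu(Q_i^j)$; the union of these selected sub-cubes is $R_i^j$, with the convention $R_i^j := \emptyset$ when $\mu(Q_i^j) = 0$. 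This greedy step guarantees $0 \leq \mu(R_i^j) - \lambda \mu(Q_i^j) \leq 1/j^{d+1}$.

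Finally, set $R_j := \bigcup_{i=1}^{j^d} R_i^j$. Property (c) is immediate, since $\partial R_j$ is contained in the finite union of the sub-cube boundaries, each $\mu_n$-null by construction. For (a), fix $\varphi \in C_0(Q)$, let $\omega$ denote its modulus of continuity, and pick any $x_i \in Q_i^j$; summing the pointwise estimate $|\varphi(x) - \varphi(x_i)| \leq \omega(\sqrt{d}/j)$ over $Q_i^j$ yields
\[
\left|\int_{R_j} \varphi \,\d\mu - \lambda \int_Q \varphi \,\d\mu\right| \leq 2\omega(\sqrt{d}/j)\,\mu(Q) + \|\varphi\|_\infty \sum_{i=1}^{j^d} |\mu(R_i^j) - \lambda \mu(Q_i^j)| \leq 2\omega(\sqrt{d}/j)\,\mu(Q) + \|\varphi\|_\infty/j,
\]
which tends to $0$ as $j \to \infty$. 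Combined with the uniform bound $(\mu \restr R_j)(Q) \leq \mu(Q) < \infty$, this gives $\mu \restr R_j \wt \lambda\, \mu$; property (b) then follows at once from $\mu \restr (Q \setminus R_j) = \mu - \mu \restr R_j$.

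The main obstacle is constraint (c): one cannot in general slice $Q_i^j$ with a single hyperplane of prescribed $\mu$-mass while simultaneously avoiding every $\mu_n$-charged level set. The workaround above is precisely to realize $R_j$ as a union of sub-cubes of a common refined grid whose faces are all $\mu_n$-null for every $n$, trading exact measure equality for a vanishing approximation error controlled by the uniform smallness of sub-cubes, which is in turn a consequence of the non-atomicity of $\mu$.
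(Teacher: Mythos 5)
Your proof is correct, but it takes a genuinely different route from the paper's. The paper works on a coarse grid chosen with $\mu$-null boundaries and, inside each cube $Q_i^j$, first produces a set $S_i^j$ with \emph{exactly} $\mu(S_i^j)=\lambda\,\mu(Q_i^j)$ via the Lyapunov--Darboux property of non-atomic measures (\cite[Proposition 1.20]{FonLeo}); condition c) is then recovered a posteriori by mollifying $\mu\restr S_i^j$ and replacing $S_i^j$ with a superlevel set $\{f_n\geq t\}$ at a generic level $t$ (each $\mu_n$ charges only countably many levels), and a)--b) are checked on Borel sets $E$ with $\mu(\partial E)=0$. You dispense with both the exact splitting and the mollification: non-atomicity enters only through the uniform smallness of the $\mu$-mass of cubes of small side, the proportion $\lambda$ is achieved up to an error $j^{-(d+1)}$ per coarse cube by a greedy selection of sub-cubes of a refined grid, and c) is built in from the start by translating the grid so that its hyperplanes are $\mu_n$-null for every $n$ (again a countability argument), the weak* convergence being verified directly against continuous test functions through a modulus-of-continuity estimate. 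Your argument is more elementary and self-contained (no appeal to the Lyapunov property, and none of the care the paper needs to keep the superlevel sets inside $Q_i^j$), at the price of only approximate proportions on each cube, which is all the statement requires. Two minor points to make explicit: take the sub-cubes half-open, so that they genuinely partition each $Q_i^j$ and their masses add up to $\mu(Q_i^j)$ even though the grid faces need not be $\mu$-null (alternatively, include $\mu$ among the measures avoided when choosing the translation $v_j$); and your use of $\mu(Q)<+\infty$ (in the uniform-smallness claim and in the bound $2\omega(\sqrt d/j)\,\mu(Q)$) is the same finiteness that is implicit in the weak* framework of the statement and in the paper's own proof, so it is not an additional restriction.
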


\begin{proof}
Fix $j\in\N$. It is possible to find $x_j\in\R^d$ such that $G_{x_j,1/j}$ is such that
$\mu(\partial G_{x_j,1/j})=0$. Let $\{Q_i^j\}_{j=1}^{M_j}$ denote the elements of the grid.
For every $i=1,\dots,M_j$, since $\mu$ is not atomic, it is possible to use \cite[Proposition 1.20]{FonLeo} in order to find a $\mu$-measurable set $S_i^j\subset Q^j_i$ such that $\mu(S^j_i)=\lambda\mu(Q^j_i)$.
We claim that there exists $\widetilde{S}_i^j\subset Q^j_i$ such that $\mu(\partial \widetilde{S}^j_i)=0$
\begin{equation}\label{eq:approxlambda}
|\mu(\widetilde{S}^j_i)-\lambda\mu(Q^j_i)|\leq\frac{1}{j M_j}\,.
\end{equation}
Indeed, consider the measure $\mu^j_i:=\mu\restr S^j_i$, and let $f_n:=\mu^j_i\ast\rho_n$, where $\{\rho_n\}_{k\in\N}$ is a sequence  of mollifiers. Let $n$ be large enough so that
\[
\left|\mu^j_i(Q)- \int_Q f_n \d x\right|<\frac{1}{2j M_j}\,.
\]
For $t>0$, let $\widetilde{S}^j_i:=\{ f_n\geq t\}$.
Using the fact that each $\mu_n$ is a Radon measure, and thus for all but countably many $t>0$ it holds $\mu_n(\partial \widetilde{S}^j_i)=0$, it is possible to find $t>0$ such that
$\mu_n(\partial \widetilde{S}^j_i)=0$ for all $n\in\N$, and
\[
\left|\int_{\widetilde{S}^j_i} f_n \d x - \int_Q f_n \d x\right|<\frac{1}{2j M_j}\,.
\]
Define, for each $j\in\N$
\[
R_j:=\bigcup_{i=1}^{M_j} \widetilde{S}^j_i\,.
\]
From the definition, it follows that $\mu(\partial R_j)=0$ for all $j\in\N$.
To prove that $\mu\restr R_j\wt \lambda\mu$, take a Borel set $E\subset Q$ with $\mu(\partial E)=0$.
Fix $\eta>0$, and let $U\supset\partial E$ be an open set with $\mu(U)<\eta$.
Let $j$ large enough so that $Q^j_i\cap\partial E\neq\emptyset$ implies $Q^j_i\subset U$.
Set $I:=\{ i=1,\dots M_j | Q^j_i\cap\partial E\neq\emptyset\}$.
Then
\begin{align*}
\mu(E\cap R_j) &= \sum_{i=1}^{M_j} \mu(\widetilde{S}^j_i\cap E) \\
&=\sum_{i\in I}\mu(\widetilde{S}^j_i\cap E) + \sum_{i\notin I} \mu(\widetilde{S}^j_i\cap E) \\
&\leq \eta + \sum_{i\notin I} \mu(\widetilde{S}^j_i\cap E) \\
&\leq \eta + \sum_{i\notin I} \lambda\mu(Q^j_i) +\frac{1}{j} \\
&\leq 2\eta + \lambda\mu(E)+\frac{1}{j} \,,
\end{align*}
where we used \eqref{eq:approxlambda}.
Therefore, we conclude by sending $j\to+\infty$, and using the arbitrariness of $\eta$.
\end{proof}

The second result is a test for the weak* convergence of measures.

\begin{lemma}\label{lem:weakconvopen}
Let $\{\mu_n\}_{n\in\N}$ be a sequence of nonnegative Radon measures on an open bounded set $\Omega\subset\R^d$ with Lipschitz boundary such that $\mu_n(U)\to\mu(U)$ for all open sets $U\subset\Omega$ with Lipschitz boundary such that $\mu(\partial U)=0$. Then $\mu_n\wt\mu$.
\end{lemma}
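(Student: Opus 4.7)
\textbf{Plan for the proof of Lemma \ref{lem:weakconvopen}.} The strategy is to reduce to the sufficient condition for weak* convergence recalled in the lemma after Definition \ref{def:weakstar}: it is enough to verify that $\sup_n\mu_n(\Omega)<+\infty$ and that $\mu_n(E)\to\mu(E)$ for every bounded Borel set $E\subset\subset\Omega$ with $\mu(\partial E)=0$. The uniform mass bound follows at once by applying the hypothesis with $U=\Omega$: since $\Omega$ itself is open with Lipschitz boundary by assumption, and $\mu$ is a Radon measure on the open set $\Omega$ (so $\partial\Omega\cap\Omega=\emptyset$), the condition $\mu(\partial\Omega)=0$ is trivial. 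Thus $\mu_n(\Omega)\to\mu(\Omega)<+\infty$, in particular $\sup_n\mu_n(\Omega)<+\infty$.

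For the second requirement, fix a Borel set $E\subset\subset\Omega$ with $\mu(\partial E)=0$ and $\varepsilon>0$. The plan is to trap $E$ between two open Lipschitz sets $U\subset E\subset V$ both satisfying $\mu(\partial U)=\mu(\partial V)=0$ and with total mass close to $\mu(E)$. Two classical properties of the finite Radon measure $\mu$ will be used: since $\mu(\partial E)=0$ one has $\mu(E^{\circ})=\mu(E)=\mu(\overline{E})$; inner regularity produces a compact set $K\subset E^{\circ}$ with $\mu(K)\geq\mu(E)-\varepsilon$, and outer regularity produces an open set $W$ with $\overline{E}\subset W\subset\subset\Omega$ and $\mu(W)\leq\mu(E)+\varepsilon$.

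The approximating sets $U$ and $V$ will be assembled as polyrectangles from a grid of cubes of small side $\delta>0$ translated by a vector $a\in[0,\delta)^d$. The key observation is that at most countably many affine hyperplanes of $\mathbb{R}^d$ can carry positive $\mu$-mass (since $\mu$ is finite), so for almost every offset $a$ no hyperplane of the shifted grid is charged by $\mu$. Having fixed such an $a$, for $\delta$ sufficiently small (smaller than $\mathrm{dist}(K,\partial(E^{\circ}))/(2\sqrt{d})$ and $\mathrm{dist}(\overline{E},W^{c})/(2\sqrt{d})$) the polyrectangle
\[
U:=\mathrm{int}\Bigl(\bigcup\{\,\overline{Q}\,:\,Q\text{ grid cube},\ \overline{Q}\cap K\neq\emptyset\,\}\Bigr)
\]
satisfies $K\subset U\subset E^{\circ}\subset E$, and the analogously defined polyrectangle $V$ built from the cubes touching $\overline{E}$ satisfies $\overline{E}\subset V\subset W$. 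Both are bounded polyhedra, hence open with Lipschitz boundary, and $\mu(\partial U)=\mu(\partial V)=0$ by the choice of shift. In particular $\mu(E)-\varepsilon\leq\mu(K)\leq\mu(U)\leq\mu(E)\leq\mu(V)\leq\mu(W)\leq\mu(E)+\varepsilon$.

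Applying the hypothesis to $U$ and $V$ and sandwiching,
\[
\mu(U)=\lim_{n\to\infty}\mu_n(U)\leq\liminf_{n\to\infty}\mu_n(E)\leq\limsup_{n\to\infty}\mu_n(E)\leq\lim_{n\to\infty}\mu_n(V)=\mu(V),
\]
so $|\limsup_n\mu_n(E)-\mu(E)|\leq\varepsilon$ and $|\liminf_n\mu_n(E)-\mu(E)|\leq\varepsilon$; letting $\varepsilon\to0$ yields $\mu_n(E)\to\mu(E)$ as required. The main technical obstacle is the construction of the approximating sets $U$ and $V$: they must simultaneously be open with Lipschitz boundary (so that the hypothesis applies) \emph{and} have $\mu$-negligible boundary (so that the sandwich is sharp). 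This is exactly what the grid-of-cubes construction, together with the almost-every-shift argument used to avoid the countably many $\mu$-charged hyperplanes, is designed to achieve.
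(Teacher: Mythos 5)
Your overall strategy is the same sandwiching argument as the paper's: reduce to convergence on Borel sets $E\subset\subset\Omega$ with $\mu(\partial E)=0$, trap $E$ between an inner and an outer open Lipschitz set whose $\mu$-masses are within $\e$ of $\mu(E)$, apply the hypothesis to those two sets and squeeze $\mu_n(E)$. You even handle two points the paper passes over quickly (the uniform mass bound via $U=\Omega$, and the requirement $\mu(\partial U)=\mu(\partial V)=0$, which is indeed needed to invoke the hypothesis). The difference is in how the approximating Lipschitz sets are produced: the paper obtains them by mollification, while you build them as unions of cubes from a generically shifted grid so that the grid hyperplanes are $\mu$-null.

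There is, however, a genuine gap in that construction: the claim that $U:=\mathrm{int}\bigl(\bigcup\{\overline{Q}:\overline{Q}\cap K\neq\emptyset\}\bigr)$ is ``a bounded polyhedron, hence open with Lipschitz boundary'' is false in general. A finite union of closed grid cubes can have boundary points at which the set is not the subgraph of any Lipschitz function: in $d=2$, if $K$ consists of one point in each of the two cubes $(0,1)^2$ and $(1,2)^2$, then $U$ is the union of these two open squares, and at the common corner $(1,1)$ the set looks locally like two opposite open quadrants (a ``bowtie''), which is not a Lipschitz configuration; connected examples with the same defect (a loop of cubes pinched at a corner) exist as well, so you cannot simply pass to connected components. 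Since your hypothesis only applies to open sets with Lipschitz boundary, the step where you feed $U$ and $V$ into the assumption can fail. The gap is repairable — for instance, mollify the characteristic function of your cube union and take a superlevel set $\{f_\delta>t\}$ for a value $t$ that is simultaneously a regular value (Sard, giving a smooth, hence Lipschitz, boundary) and such that $\mu(\{f_\delta=t\})=0$ (all but countably many $t$), which is essentially the paper's mollification route combined with your null-boundary selection; alternatively one can modify the cube union in a neighborhood of its $(d-2)$-dimensional faces. As written, though, the Lipschitz claim for the polyrectangles is a missing step, not a routine verification.
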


\begin{proof}
Let $E\subset\Omega$ be a Borel set with $\mu(\partial E)=0$.
We want to show that $\mu_n(E)\to\mu(E)$.
Fix $\e>0$.
Using the outer regularity of $\mu$, it is possible to find an open set $V\supset E$ such that
$\mu(V)< \mu(E)+\frac{\e}{2}$, and an open set $W\supset\partial E$ with $\mu(W)<\frac{\e}{2}$.
Since $\partial E$ is compact, there exists $r>0$ such that $B_r(x)\subset W$ for all $x\in\partial E$.
Let $U:=V\cup W$. Using mollification, it is possible to find an open set $U_2$ with Lipschitz boundary, such that $E\subset U_2\subset U$.
Then
\begin{equation}\label{eq:U1}
\mu(U_2)< \mu(E)+\e\,.
\end{equation}

Using the inner regularity of $\mu$, it is possible to find a compact set $K\subset E\setminus\partial E$ such that $\mu(K)>\mu(E)-\e$. Note that here we used the fact that $\mu(\partial E)=0$.
In the case $E\setminus\partial E=\emptyset$, we can just take $K=\emptyset$.
Since $K\subset E\setminus \partial E$, and $\Omega$ is bounded, we have that $\mathrm{dist}(K,\partial E)>0$.
Therefore, using mollifications, it is then possible to find an open set $U_1$ with Lipschitz boundary, such that $K\subset U_1\subset E\setminus\partial E$.
Then
\begin{equation}\label{eq:U2}
\mu(E)-\e<\mu(U_1)\,.
\end{equation}
Note that
\begin{equation}\label{eq:chainofinequalities}
\mu_n(U_1)\leq\mu_n(E)\leq\mu_n(U_2)\,.
\end{equation}
Since by assumption we have that
\[
\mu_n(U_1)\to\mu(U_1)\,,\quad\quad\quad
\mu_n(U_2)\to\mu(U_2)\,,
\]
from \eqref{eq:U1}, \eqref{eq:U2}, and \eqref{eq:chainofinequalities} we get
\[
\mu(E)-\e\leq\liminf_{n\to+\infty} \mu_n(E) \leq\limsup_{n\to+\infty}\mu_n(E)\leq\mu(E)+\e\,.
\]
Using the arbitrariness of $\e>0$, we conclude that $\mu_n(E)\to\mu(E)$, getting the desired result.
\end{proof}


\subsection*{Acknowledgements}
The authors are grateful to Irene Fonseca and Giovanni Leoni for having introduced them to the study of the problem.
The work of R.C. has been supported by the National Science Foundation under Grant No. DMS-1411646 during the period at Carnegie Mellon University, and by the grant EP/R013527/2 ``Designer Microstructure via Optimal Transport Theory" of David Bourne while at Heriot-Watt University.
The work of M.C has been supported by the Fundação para a Ciência e a Tecnologia (Portuguese Foundation for Science and Technology) through the Carnegie Mellon\textbackslash Portugal Program under Grant 18316.1.5004440 while in Lisbon, and by the grant ``Calcolo delle variazioni, Equazione alle derivate parziali, Teoria geometrica della misura, Trasporto ottimo" co-founded by Scuola Normale Superiore and the university of Florence during the last part of the work.


\end{document}